\theoremstyle{plain}
\newtheorem{thm}{Theorem}
\newtheorem{theorem}[thm]{Theorem}
\newtheorem*{theorem*}{Theorem}
\newtheorem{corollary}[thm]{Corollary}
\newtheorem{lemma}[thm]{Lemma}
\newtheorem*{lemma*}{Lemma}
\newtheorem*{conjecture*}{Conjecture}
\newtheorem*{fact*}{Fact}
\newtheorem*{question*}{Question}
\newtheorem*{maintheorem}{Main Theorem}
\newtheorem*{maincorollary}{Main Corollary}
\newtheorem{introtheorem}{Theorem}
\newtheorem*{crrconjecture}{CAT(0) Rank Rigidity Conjecture}
\theoremstyle{remark}
\newtheorem*{remark}{Remark}
\newtheorem*{remarks}{Remarks}
\theoremstyle{definition}
\newtheorem{definition}[thm]{Definition}
\newtheorem*{standing hypothesis}{Standing Hypothesis}
\newcounter{step}
\newcommand{\step}{\textbf{\refstepcounter{step} Step \Roman{step}. }}
\newcommand{\thmref}[1]{Theorem~\ref{#1}}
\newcommand{\corref}[1]{Corollary~\ref{#1}}
\newcommand{\lemref}[1]{Lemma~\ref{#1}}
\newcommand{\itemrefstar}[1]{(\ref*{#1})}
\newcommand{\secref}[1]{Section~\ref{#1}}
\newcommand{\defn}[1]{\emph{#1}}
\newcommand{\N}{\mathbb{N}}
\newcommand{\R}{\mathbb{R}}
\newcommand{\into}{\hookrightarrow}
\renewcommand{\setminus}{\smallsetminus}
\DeclareMathOperator{\rank}{rank}
\DeclareMathOperator{\id}{id}
\DeclareMathOperator{\Isom}{Isom}
\newcommand{\set}[1]{\left\{#1\right\}}
\newcommand{\setp}[2]{\left\{#1 \mid #2\right\}}
\newcommand{\abs}[1]{\left| #1 \right|}
\newcommand{\res}[1]{\vert_{#1}}
\newcommand{\cl}[1]{\overline{#1}}
\newcommand{\bd}{\partial}
\newcommand{\double}[1]{#1 \times #1}
\newcommand{\dbX}{\double{\bd X}}
\newcommand{\bdT}{\partial_{\mathrm{T}}}
\newcommand{\lmod}{\backslash}
\newcommand{\modgp}[2]{#2 \lmod #1}
\newcommand{\modG}[1]{\modgp{#1}{\Gamma}}
\renewcommand{\epsilon}{\varepsilon}
\DeclareMathOperator{\CAT}{CAT}
\newcommand{\emb}{\mathfrak{e}}
\newcommand{\tpcX}{\cl{X} \times \dbX}
\newcommand{\Par}{\mathcal{P}}
\newcommand{\PPar}{P}
\newcommand{\CS}{\mathcal{CS}}
\DeclareMathOperator{\Closedsets}{\mathfrak{C}}
\newcommand{\BX}{\partial X}
\renewcommand{\emptyset}{\varnothing}
\newcommand{\dX}{d_X}
\newcommand{\doubleres}{\overline}
\newcommand{\ra}{\to}
\newcommand{\sbs}{\subset}
\newcommand{\p}{\partial}
\newcommand{\0}[1]{_{_{#1}}}
\newcommand{\bS}{\Bbb{S}}
\newcommand{\barS}{{\bar{S}}}
\newcommand{\bg}{{\bar{g}}}
\title{Intrinsic Rank in CAT(0) Spaces}
\author{Pedro Ontaneda}
\address{Binghamton University, Binghamton, New York, USA}
\email{pedro@math.binghamton.edu}
\author{Russell Ricks}
\email{rricks@binghamton.edu}
\begin{document}

\begin{abstract}
Let $X$ be a proper, geodesically complete $\CAT(0)$ space which satisfies Chen and Eberlein's duality condition.
We show the existence of a strong notion of rank for $X$ by proving that the parallel sets $P_v$ of geodesics $v$ in $X$ are generically flat.
More precisely, let $GX$ be the space of parametrized unit-speed geodesics in $X$.
There is a unique $k$ and a dense $G_\delta$ set $\mathcal{A}$ in $GX$ such that $P_v$ is isometric to flat Euclidean space $\mathbb{R}^k$, for all $v \in \mathcal{A}$.
It follows that $\mathbb{R}^k$ isometrically embeds in $P_v$ for every $v \in GX$.
\end{abstract}

\maketitle

\section{Introduction}

Let $M$ be a Hadamard manifold, that is, a complete simply connected Riemannian manifold of nonpositive curvature.
We will denote the set of (unit-speed parametrized) geodesic lines in $M$ by $GM$.
For a geodesic line $v$ in $M$ the \defn{parallel set} $P_v$  of $v$ is the subset of $M$ formed by the union of the images of all geodesic lines parallel to $v$.
The set $P_v$ is a closed convex subset of $M$.
Recall that the \defn{rank} of a geodesic line $v$ is the number of linearly independent parallel Jacobi fields along $v$, and the \defn{rank of} $M$ is the minimum rank of all geodesic lines in $M$.

If $P_v$ is isometric to some Euclidean space $\mathbb{R}^k$, then clearly the rank of $v$ is at least $k$.
The converse is true generically, under a condition called the \defn{duality condition} (see remark after theorem below).
We state the result explicitly.

\begin{introtheorem} [Theorem 2.2 of \cite{eh90}]	\label{open dense}
Let $M$ be a Hadamard manifold.
Assume that $M$ satisfies the duality condition.
If the rank of $M$ is $k$, then there is an open dense set $\mathcal{A} \subseteq GM$ such that $P_v$ is isometric to $\mathbb{R}^k$, for all $v\in \mathcal{A}$.
Moreover, Euclidean space $\mathbb{R}^k$ isometrically embeds in $P_v$ for every $v \in GX$.
\end{introtheorem}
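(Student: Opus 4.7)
The plan is to establish the theorem in three stages: upper semicontinuity of rank, density via the duality condition, and a de Rham analysis of the parallel set.

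First, I would define $r \colon GM \to \Z_{\geq 0}$ by $r(v) = \dim\{J : J \text{ is a parallel Jacobi field along } v\}$ and show that $r$ is upper semicontinuous. If $v_n \to v$ and $r(v_n) \geq j$, I select orthonormal bases of parallel Jacobi fields along each $v_n$; since the conditions $\nabla J = 0$ and $R(J, v')v' = 0$ pass to subsequential limits (using continuity of parallel transport and of the curvature tensor in the footpoint), a diagonal extraction yields $j$ linearly independent parallel Jacobi fields along $v$. With $k := \min r$, the set $\mathcal{B} := r^{-1}(k) = \{v : r(v) \leq k\}$ is therefore open and nonempty.

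Second, rank is invariant under isometries (hence under $\Gamma$) and under reparametrization (hence under the geodesic flow $g_t$), so $\mathcal{B}$ is a nonempty open $(\Gamma, g_t)$-invariant subset of $GM$. The duality condition is equivalent to nonwandering of the geodesic flow on $\Gamma \backslash GM$, from which the standard argument shows every nonempty open $(\Gamma, g_t)$-invariant subset of $GM$ is dense. Hence $\mathcal{B}$ is open and dense.

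Third, I would refine $\mathcal{B}$ to an open dense $\mathcal{A}$ on which $P_v$ is isometric to $\R^k$. Each parallel set $P_v$ is a closed convex (hence simply connected) totally geodesic submanifold of $M$, so the de Rham theorem writes $P_v \cong \R^{m(v)} \times Q_v$ with $Q_v$ having no Euclidean factor; the $m(v)$ flat directions give $m(v)$ parallel Jacobi fields along $v$, so $m(v) \leq k$. Using the transitivity of parallelism in Hadamard manifolds (so that $P_w = P_v$ for every $w \in P_v$), I apply the global minimality $r(w) \geq k$ to test geodesics $w \in P_v$ of various types in the product decomposition; the resulting chain of rank inequalities forces $\rank(Q_v) = \dim Q_v$, so that $R(\cdot, w')w' \equiv 0$ along every geodesic of $Q_v$, i.e., $Q_v$ is flat. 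Since $Q_v$ has no Euclidean factor, $Q_v$ must be a point, giving $P_v \cong \R^{m(v)} = \R^k$. The main obstacle is that the splitting $P_v = \R^{m(v)} \times Q_v$ varies only semicontinuously with $v$ and the argument above requires $v$ to sit in a compatible position within its own splitting; this is handled by localizing to the open stratum where $m(v)$ attains its maximum and re-invoking duality to secure density of the refined set. Finally, the isometric embedding $\R^k \hookrightarrow P_v$ for every $v \in GM$ follows from $r(v) \geq k$ together with the de Rham theorem applied to any $k$ linearly independent parallel Jacobi fields along $v$, which exhibit $\R^k$ as an isometric Euclidean factor of $P_v$.
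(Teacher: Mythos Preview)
This theorem is not proved in the paper; it is quoted from Eberlein--Heber as motivating background, and the paper's own contribution is the Main Theorem generalizing it to $\CAT(0)$ spaces. So there is no proof in the paper to compare your proposal against.

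That said, your Step~3 contains a genuine gap. The assertion that parallelism is transitive in Hadamard manifolds---that $P_w = P_v$ for every geodesic $w$ lying in $P_v$---is false. Take $M = \mathbb{H}^2 \times \R$ and let $v$ be a geodesic in the $\R$ direction: then $P_v = M$, but any geodesic $w$ lying in the $\mathbb{H}^2$ factor has $P_w = w \times \R$, a flat plane strictly smaller than $M$. Without transitivity, the ``chain of rank inequalities'' you invoke does not assemble: the rank of a test geodesic $w \subset P_v$, computed in $M$, need not decompose along the splitting $P_v = \R^{m(v)} \times Q_v$, and nothing forces $\rank(Q_v) = \dim Q_v$. (Indeed, $Q_v$ could a priori be an irreducible higher-rank symmetric space, which has no Euclidean factor yet is far from flat.) The actual argument in Eberlein--Heber, and the earlier one of Ballmann--Brin--Eberlein, does not use a de~Rham splitting of $P_v$ at all; it restricts to vectors that are both regular \emph{and recurrent} and uses recurrence to show directly that the $k$ parallel Jacobi fields exponentiate to a $k$-flat exhausting $P_v$.

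Your final paragraph has a related elision. Having $k$ linearly independent parallel Jacobi fields along an arbitrary $v$ does not, by de~Rham alone, yield an isometric embedding $\R^k \hookrightarrow P_v$: the individual flat strips coming from each parallel field need not fit together into a single flat without further argument. The standard route again passes through recurrence---one approximates $v$ by regular recurrent vectors (whose parallel sets are genuine $k$-flats by the first part) and takes a limit.
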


\begin{remark}
The duality condition is equivalent to the following property: the set of $\Gamma$-recurrent geodesics is dense, for some (not necessarily discrete) subgroup $\Gamma$ of the isometry group of $M$.
If $M$ admits a geometric action---that is, properly discontinuous, cocompact group action by isometries---then it satisfies the duality condition \cite[p.39]{ballmann} due to the presence of the Liouville measure on $GM$.
However, if $M$ is homogeneous then $M$ satisfies the duality condition only if $M$ is a symmetric space \cite[Proposition 4.9]{eb82}.
\end{remark}

In this paper, we are concerned with generalizing the result above to proper, geodesically complete $\CAT(0)$ spaces.
Our first comment in this regard is that \thmref{open dense} is not true in the $\CAT(0)$ setting, even if the underlying space is a manifold.
In fact, one can construct a proper, geodesically complete $\CAT(0)$ metric on $\mathbb{R}^2$ under a rank one geometric action (which satisfies the duality condition by \cite[Proposition 3.6]{ricks-mixing}), but the set of geodesics $v$ with $P_v$ isometric to $\mathbb{R}$ does not contain an open dense set.
Of course this metric is not Riemannian.
We give the idea of this construction in the following paragraph.

Take a surface $S$ of genus $\geq 2$, with a hyperbolic metric.
Enumerate the closed geodesics $c_1, c_2,...$ and add thinner and thinner cylinders around each $c_i$.
If we do this with some care (see the Appendix for a few more details), we obtain a well defined nonpositively curved geodesic metric; we denote the surface with this new metric by $S'$.
Each $c_i$ will be homotopic to a closed geodesic $c_i'$ in $S'$.
Moreover each $c_i'$ is contained, by construction, in a thin cylinder.
Now just take the universal cover $X$ of $S'$, which is homeomorphic to $\mathbb{R}^2$.
Let $B$ be the set of all possible liftings of the $c_i'$ (with varying base points).
Then $B$ is dense in $GX$ and for every $v\in B$, $P_v$ is an infinite strip, hence not isometric to any Euclidean space.
Therefore the set $\mathcal{A}$ of all $v\in GX$ with $P_v$ isometric to $\mathbb{R}$ does not contain an open set.

The metric in the example above is not smoothly Riemannian, but it is $C^0$-Riemannian, and it is the limit of smooth Riemannian metrics.

However, in the example above one can prove that the set $\mathcal{A}$ is a dense $G_\delta$ set.
Recall that a dense $G_\delta$ set is a countable intersection of dense open sets.
These sets behave like open dense sets in the sense that the intersection of two dense $G_\delta$ sets is also a dense $G_\delta$ set.
Moreover, any countable intersection of dense $G_\delta$ sets is also a dense $G_\delta$ set.
This is why a property is called generic if the set of objects satisfying the property is a dense $G_\delta$ set.
The main result of this paper is a generalization of \thmref{open dense}.

\begin{maintheorem}				\label{intrinsic rank}
Let $X$ be a proper, geodesically complete $\CAT(0)$ space.
Assume that $X$ satisfies the duality condition.
Then there is a unique $k$ and a dense $G_\delta$ set $\mathcal{A} \subseteq GX$ such that $P_v$ is isometric to $\mathbb{R}^k$, for all $v \in \mathcal{A}$.
Moreover, Euclidean space $\mathbb{R}^k$ isometrically embeds in $P_v$ for every $v \in GX$.
\end{maintheorem}

Note that now, assuming the duality condition, we can \emph{define} the rank of $X$ as the number $k$ given by the theorem.
We call this number the \emph{intrinsic rank} of $X$.

We remark that a result of Ballmann (Theorem III.2.4 in \cite{ballmann}) states the following for a proper, geodesically complete $\CAT(0)$ space that satisfies the duality condition:  If the geodesic flow on $GX$ does not have a dense orbit mod $\Isom(X)$---equivalently, the $\Isom(X)$-action on $\bd X$ is not minimal---then the intrinsic rank of $X$ is at least $2$.
(In other words, every geodesic in $X$ is contained in a full $2$-flat.)
Our Main Theorem not only applies even if the boundary action is not known to be non-miminal, but it also provides a dense set of geodesics for which the parallel set is exactly a $k$-flat instead of some larger space.

It is interesting to note that one can propose several natural ways of generalizing the concept of Riemannian rank to geodesic spaces.
For instance, the rank of a geodesic $v$ is two if it is contained in a flat plane, or it is contained in the interior of an infinite strip, or if it bounds a half plane, or half bounds an infinite strip.
One can also fix the width of the strip, or take strips with varying widths.
In higher dimensions there are even more choices: $v$ is contained in an Euclidean space $\mathbb{R}^k$, or $v$ is contained in the boundary of a half Euclidean space, or in the boundary of a quarter Euclidean space (which could be ``at the vertex" or not), and so on.
Our result shows that, under the duality condition, all these possible generalizations are equivalent; they yield the same concept of the rank of a space.

In particular, if one can find a single geodesic line $v$ in $X$ such that $P_v$ does not contain a flat plane, then $X$ has rank one and there is a dense $G_\delta$ set of geodesics $w$ with $w$ being parallel only to itself.
It follows (still assuming the duality condition, of course) that $X$ contains a dense set of so-called rank one axes---geodesics on which nontrivial elements of $\Isom(X)$ act by translation and which do not bound a flat half-plane \cite[Theorem~ III.3.4]{ballmann}.
Under such conditions $X$ is known to exhibit a fair degree of hyperbolic behavior.

This generalization of the concept of rank is also relevant in the formulation of a $\CAT(0)$ version of the Rank Rigidity Theorem of Ballmann, Brin, Burns, Eberlein, Heber, and Spatzier for Hadamard manifolds \cite{ballmann,burns-spatzier,eh90}.
This celebrated result states that if a Hadamard manifold satisfies the duality condition and has rank at least two, then it is either a symmetric space or a Riemannian product.
A version of this result holds for $\CAT(0)$ cube complexes, by Caprace and Sageev \cite{caprace-sageev}.
It is conjectured that an appropriate generalization also holds for $\CAT(0)$ spaces.
Using the intrinsic rank we can state this conjecture in the following way.

\begin{crrconjecture}				\label{rank rigidity}
Let $X$ be a proper, geodesically complete $\CAT(0)$ space that satisfies the duality condition.
If the intrinsic rank of $X$ is at least two, then $X$ is either a product, or a symmetric space or Euclidean building.
\end{crrconjecture}

In \secref{sec:application}, we prove (\corref{maximal rank corollary}) a weak version of the $\CAT(0)$ Rank Rigidity Conjecture, subject to a dimension restriction.

\begin{maincorollary}				\label{main corollary}
Let $X$ be a proper, geodesically complete $\CAT(0)$ space that satisfies the duality condition.
Let $X = X_1 \times \dotsb \times X_n$ be the maximal de Rham decomposition of $X$, so that each $X_i$ is neither compact nor a product.
Assume $\rank(X) = 1 + \dim(\bdT X)$.
Then for each de Rham factor $X_i$ of $X$, either
\begin{enumerate}
\item\label{it:intro mrc minimal}  $\Isom(X_i)$ acts minimally on $\bd X_i$ and the geodesic flow on $GX_i$ has a dense orbit mod $\Isom(X_i)$, or
\item\label{it:intro mrc building}  $X_i$ is a symmetric space or Euclidean building of rank at least two.
\end{enumerate}
\end{maincorollary}

We remark that Ballmann \cite[p.7]{ballmann} identified three problems to solve in extending the Rank Rigidity Theorem to proper, geodesically complete $\CAT(0)$ spaces.
The first of these problems was to define the rank of a $\CAT(0)$ space (that satisfies the duality condition) in such a way that rank $k \ge 2$ if and only if every geodesic is contained in a Euclidean $k$-flat (a subspace of $X$ isometric to flat Euclidean $\R^k$), and rank one implies hyperbolic behavior.
The second was to show that if every geodesic of $X$ was contained in a $k$-flat, $k \ge 2$ (and $X$ satisfies the duality condition), then $X$ is a product, a symmetric space, or a Euclidean building.
The third was to show that if the space admits a properly discontinuous, cocompact group action by isometries, then it satisfies the duality condition.
Our Main Theorem solves the first problem (and even provides a dense $G_\delta$ set of geodesics whose parallel set is precisely a $k$-flat); \corref{maximal rank corollary} gives a partial solution to the second; the third is still open.

We make a final observation about the duality condition.
This is equivalent to requiring the geodesic flow to be nonwandering.
It is thus a very natural hypothesis from a dynamical perspective, and forms an essential ingredient in the proof of all the Rank Rigidity results for Riemannian manifolds mentioned previously.
In fact, the earliest results assumed more---compactness or finite volume, both of which imply the geodesic flow is nonwandering (the proof going back to Poincar\'e).
The most general version of Rank Rigidity for manifolds (due to Eberlein and Heber) assumes only the duality condition.
The exception here is telling:  Only by relying on the very strong combinatorial structure of the hyperplanes in CAT(0) cube complexes were Caprace and Monod able to elide the duality condition in their proof of Rank Rigidity.
Thus, since the duality condition is the essential hypothesis for proving Rank Rigidity for manifolds,
this same dynamical information is the appropriate choice to prove
Rank Rigidity for general CAT(0) spaces.

\section{Preliminaries}

Let $X$ be a metric space.
A \defn{geodesic} in $X$ is an isometric embedding $v \colon \R \to X$, a \defn{geodesic ray} is an isometric embedding $\alpha \colon [0,\infty) \to X$, and a \defn{geodesic segment} is an isometric embedding $\sigma \colon [0,r] \to X$ for some $r > 0$.
The space $X$ is called \defn{geodesic} if every pair of distinct points is connected by a geodesic segment; $X$ is \defn{uniquely geodesic} if the segment is always unique (up to reversing parametrization).
Also, $X$ is called \defn{geodesically complete} if every geodesic segment in $X$ extends to a full (not necessarily unique) geodesic in $X$.

We write $GX$ for the set of all geodesics in $X$, endowed with the compact-open topology (i.e.~ the topology of uniform convergence on compact subsets).
This space is completely metrizable when $X$ is complete.
There is also a canonical \defn{geodesic flow} $g^t$ on $GX$ given by $(g^t v)(s) = v(s + t)$.

A uniquely geodesic metric space $X$ is called \defn{CAT(0)} if, for every triple of distinct points $x,y,z \in X$, the geodesic triangle $\triangle(x,y,z) \subset X$ is no fatter than the corresponding comparison triangle $\overline{\triangle}(x,y,z)$ in Euclidean $\R^2$ (the triangle with the same edge lengths).
For more on $\CAT(0)$ spaces, see \cite{ballmann} or \cite{bridson}.

Let $X$ be a complete $\CAT(0)$ space.
The \defn{visual boundary} (written $\bd X$) of $X$ is the set of equivalence classes of asymptotic geodesic rays.
Equivalently, one can fix a basepoint $x_0 \in X$ and take all geodesic rays emanating from $x_0$.
The standard topology on $\bd X$ is the compact-open topology, often called the \defn{cone} or \defn{visual} topology.
(This topology does \emph{not} depend on choice of basepoint.)

Viewing each point $x \in X$ as a geodesic segment from a fixed basepoint to $x$, the topology of uniform convergence on compact subsets naturally gives a topology on $\overline{X} = X \cup \bd X$.
If $X$ is proper (meaning all closed balls are compact), then both $\bd X$ and $\overline{X} = X \cup \bd X$ are compact metrizable spaces.
For each $v \in GX$, we will write $v(\infty) = \lim_{t \to +\infty} v(t) \in \bd X$ and $v(-\infty) = \lim_{t \to -\infty} v(t) \in \bd X$.

We now define parallel sets and cross sections; a version of each exists both in $X$ and $GX$.
Let $v \in GX$.
A geodesic $w \in GX$ is \defn{parallel} to $v$ if the map $t \mapsto d(v(t),w(t))$ is constant.
Let $\Par_v \subset GX$ be the set of geodesics parallel to $v$, and let $P_v$ be the set of points in $X$ that lie on some $w \in \Par_v$.
We call $\Par_v$ the \defn{parallel set} of $v$ in $GX$, and $P_v$ the \defn{parallel set} of $v$ in $X$.
It is a standard fact of CAT(0) geometry that $P_v$ is a convex subset of $X$, isometric to $C_v \times \R$, where $C_v$ is a closed convex subset of $P_v$ containing $v(0)$.  We call $C_v$ the \defn{cross section} of $P_v$ in $X$, or just the \defn{cross section} of $v$ in $X$.
We call the set $\CS_v = \setp{w \in \Par_v}{w(0) \in C_v}$ the \defn{cross section} of $v$ in $GX$.
Notice that footpoint projection $GX \to X$ (given by $w \mapsto w(0)$) bijectively carries $\Par_v$ to $P_v$ and $\CS_v$ to $C_v$.

There is a simple and useful metric on $GX$, defined by
\[d(v, w) = \sup_{t \in \R} e^{-\abs{t}} d(v(t), w(t)) \qquad \text{for all } v,w \in GX.\]
This metric is complete if $X$ is complete, and proper if $X$ is proper (by the Arzel\`a-Ascoli theorem).
It is also isometry-invariant and induces the topology of uniform convergence on compact subsets.
Moreover, within each parallel set $\Par_v$, it is flow-invariant and therefore restricts to the metric on $P_v$.
Thus footpoint projection restricts to an isometry $\Par_v \to P_v$ and $\CS_v \to C_v$ for each $v \in GX$.

\section{Duality and Recurrence}

We first describe the duality condition, introduced by Chen and Eberlein \cite{chen-eberlein} for Hadamard manifolds.
The results in this section should be familiar to the experts.

Let $X$ be a complete $\CAT(0)$ space.
Write $\Isom(X)$ for the isometry group of $X$.
For a subgroup $\Gamma \le \Isom(X)$, two points $\xi, \eta \in \bd X$ are called \defn{$\Gamma$-dual} if there exists a sequence $(\gamma_n)$ in $\Gamma$ such that $\gamma_n x \to \xi$ and $\gamma_n^{-1} x \to \eta$ for some (hence any) $x \in X$.
The subgroup $\Gamma$ is said to satisfy the \defn{duality condition} if $v(\infty)$ and $v(-\infty)$ are $\Gamma$-dual for every $v \in GX$.
We will say that $X$ satisfies the duality condition if $\Isom(X)$ does.

Notice $X$ satisfies the duality condition whenever any subgroup $\Gamma$ of $\Isom(X)$ does.
In fact, if the duality condition holds for a group $\Gamma$, then it holds not only for arbitrary supergroups but also for finite-index subgroups.
Moreover, when $X$ is proper and geodesically complete, if $X$ splits as a $\CAT(0)$ product $X = X_1 \times X_2$ and satisfies the duality condition, then $X_1$ and $X_2$ also satisfy the duality condition.
These facts are proved in \cite[Remark III.1.10]{ballmann}.

The reason the duality condition is of interest dynamically is its relationship to recurrence and nonwandering, which we describe below.
A geodesic $v \in GX$ is called (forward) \defn{$\Gamma$-recurrent} if there exist sequences $t_n \to +\infty$ and $\gamma_n \in \Gamma$ such that $\gamma_n g^{t_n} (v) \to v$ as $n \to \infty$; it is called \defn{$\Gamma$-nonwandering} if there exists sequences $v_n \in GX$, $t_n \to +\infty$, and $\gamma_n \in \Gamma$ such that $v_n \to v$ and $\gamma_n g^{t_n} (v_n) \to v$ as $n \to \infty$.
(Notice $\Gamma$-recurrent implies $\Gamma$-nonwandering.)

These notions are related to the usual notions of recurrence and nonwandering as follows.
If $\Gamma \le \Isom(X)$ is discrete, then $v \in GX$ is $\Gamma$-recurrent (respectively, $\Gamma$-nonwandering) if and only if its projection onto $\modG{GX}$ is recurrent (respectively, nonwandering) under the geodesic flow $g^t_\Gamma$ on $\modG{GX}$.
The situation with nonwandering is completely similar.

\begin{definition}
We will say $v \in GX$ is \defn{recurrent} if $v$ is $\Isom(X)$-recurrent, and \defn{nonwandering} if $v$ is $\Isom(X)$-nonwandering.
Note this departure from standard usage creates minimal potential for confusion because no $v \in GX$ can ever be $\left< \id \right>$-recurrent or $\left< \id \right>$-nonwandering in a $\CAT(0)$ space.
\end{definition}

The relationship with duality is derived from the following result, originally due to Eberlein \cite{eb72} in the case of nonpositively curved smooth Riemannian manifolds.

\begin{lemma} [Lemma III.1.1 of \cite{ballmann}]	\label{Eberlein}
Let $X$ be a geodesically complete $\CAT(0)$ space, and let $\Gamma$ be a subgroup of $\Isom(X)$.
If $v,w \in GX$ are such that $v(\infty)$ and $w(-\infty)$ are $\Gamma$-dual, then there exist $\gamma_n \in \Gamma$, $t_n \to +\infty$, and $v_n \in GX$ such that $v_n \to v$ and $\gamma_n g^{t_n} v_n \to w$.
\end{lemma}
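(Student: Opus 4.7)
The plan is to extract explicit sequences $\gamma_n$, $t_n$, $v_n$ from the duality hypothesis and verify the two convergences via $\CAT(0)$ comparison and geodesic completeness. Setting $x_0 := v(0)$ and $y_0 := w(0)$, invoke $\Gamma$-duality of $v(\infty)$ and $w(-\infty)$ in its symmetric form: select $\gamma_n \in \Gamma$ with $\gamma_n x_0 \to w(-\infty)$ and $\gamma_n^{-1} y_0 \to v(\infty)$ in $\overline{X}$. (If the supplied witness sequence is in the opposite convention, replace it by its inverse.) This choice of convention is forced by the conclusion: if $v_n \to v$ and $\gamma_n g^{t_n} v_n \to w$ with $t_n \to +\infty$, then $v_n(t_n) \approx \gamma_n^{-1} y_0$ must visually approach $v(\infty)$.

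Next, fix an auxiliary sequence $T_n \to \infty$ growing slowly compared to $d(x_0, \gamma_n^{-1} y_0)$, and let $v_n$ be a bi-infinite geodesic extension of the unique segment from $v(-T_n)$ to $\gamma_n^{-1} y_0$, parametrized so that $v_n(-T_n) = v(-T_n)$ and $v_n(t_n) = \gamma_n^{-1} y_0$, where $t_n := d(v(-T_n), \gamma_n^{-1} y_0) - T_n$. Geodesic completeness supplies the required extension past each endpoint, and the slow growth of $T_n$ ensures $t_n \to +\infty$.

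Verification proceeds in two parts. For $v_n \to v$: since $\gamma_n^{-1} y_0 \to v(\infty)$ in $\overline{X}$, the direction at $v(-T_n)$ from $v_n$ toward $\gamma_n^{-1} y_0$ converges to the direction of $v$ there; a $\CAT(0)$ comparison bounds $d(v_n(t), v(t))$ on compact subintervals of $\R$ in terms of the angular defect times the interval length, and an Arzel\`a-Ascoli extraction (valid since $GX$ is proper when $X$ is) upgrades this to metric convergence in $GX$. For $\gamma_n g^{t_n} v_n \to w$: the basepoint identity $\gamma_n v_n(t_n) = y_0 = w(0)$ is built in; backward convergence holds because the segment from $v(-T_n)$ to $\gamma_n^{-1} y_0$ approximates $v$ on the relevant scale, so its $\gamma_n$-image tracks $w$ asymptotically via $\gamma_n x_0 \to w(-\infty)$; and forward convergence past $\gamma_n^{-1} y_0$ uses geodesic completeness to select an extension whose $\gamma_n$-image aligns with $w$'s forward ray.

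The main obstacle is bridging the gap between the visual-boundary asymptotics (all that $\Gamma$-duality directly supplies) and the metric convergence in $GX$ that the conclusion demands. In particular, the forward extension past $\gamma_n^{-1} y_0$ is not canonical, so one must select compatible extensions across $n$ while simultaneously respecting the imposed backward direction. A diagonal/subsequence extraction reconciles these constraints and constitutes the technical heart of the proof.
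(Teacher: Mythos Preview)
The paper does not prove this lemma itself; it is cited from Ballmann. Your overall strategy matches the standard one, but there is a genuine gap in the forward half of $\gamma_n g^{t_n} v_n \to w$. You run the defining segment of $v_n$ only as far as $\gamma_n^{-1} y_0$ and then invoke geodesic completeness to ``select an extension whose $\gamma_n$-image aligns with $w$'s forward ray.'' Geodesic completeness guarantees \emph{some} extension past $y_0$, but its direction is constrained to lie at angle $\pi$ from the incoming direction $\xi_n$ of $\gamma_n v_n$; in a branching $\CAT(0)$ space there is no reason any admissible continuation points near $w'(0^+)$. A subsequential limit of arbitrarily chosen extensions yields \emph{some} geodesic $u$ with $u|_{(-\infty,0]} = w|_{(-\infty,0]}$, but nothing forces $u|_{[0,\infty)} = w|_{[0,\infty)}$---the diagonal extraction you allude to does not resolve this.

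The standard argument avoids the issue entirely by aiming the segment at $\gamma_n^{-1} w(S_n)$ for a second slowly growing sequence $S_n \to \infty$, rather than at $\gamma_n^{-1} y_0$. Then the segment from $v(-T_n)$ to $\gamma_n^{-1} w(S_n)$, after applying $\gamma_n$ and the appropriate time shift, already covers any fixed window $[-R,R]$ once $n$ is large; the arbitrary geodesic-completeness extensions are pushed beyond $\pm R$ and are irrelevant to uniform convergence on compacta. A single diagonal choice of $T_n$ and $S_n$ handles both convergences simultaneously, with no branch selection needed.
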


In particular (see the discussion preceding Corollary III.1.4 in \cite{ballmann}):

\begin{corollary}
Let $X$ be a $\CAT(0)$ space.
If every $v \in GX$ is nonwandering, then $X$ satisfies the duality condition.
The converse holds if $X$ is geodesically complete.
\end{corollary}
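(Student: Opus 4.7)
The plan is to prove each direction separately. The converse direction---duality implies nonwandering in the geodesically complete setting---should be immediate from \lemref{Eberlein}: given any $v \in GX$, the duality condition says $v(\infty)$ and $v(-\infty)$ are $\Isom(X)$-dual, so applying \lemref{Eberlein} with $w = v$ yields $\gamma_n \in \Isom(X)$, $t_n \to +\infty$, and $v_n \in GX$ with $v_n \to v$ and $\gamma_n g^{t_n} v_n \to v$, which is precisely the definition of $v$ being nonwandering.

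For the forward direction, fix $v \in GX$ and extract from the nonwandering hypothesis sequences $v_n \to v$, $t_n \to +\infty$, and $\gamma_n \in \Isom(X)$ such that $w_n := \gamma_n g^{t_n} v_n \to v$; set $x = v(0)$. The plan is to show that $\gamma_n^{-1}$ witnesses the duality of $v(\infty)$ and $v(-\infty)$, that is, $\gamma_n^{-1} x \to v(\infty)$ and $\gamma_n x \to v(-\infty)$ in $\overline{X}$. Since $w_n(0) \to v(0) = x$, the isometry property gives $d(\gamma_n^{-1} x, v_n(t_n)) = d(x, w_n(0)) \to 0$ and $d(\gamma_n x, w_n(-t_n)) = d(x, v_n(0)) \to 0$, reducing the task to proving $v_n(t_n) \to v(\infty)$ and, symmetrically, $w_n(-t_n) \to v(-\infty)$.

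These two claims are symmetric, so focus on $v_n(t_n) \to v(\infty)$. Unboundedness of $d(x, v_n(t_n))$ follows from $t_n \to \infty$ and $d(v(0), v_n(0)) \to 0$, so the limit---if it exists---lies in $\bd X$. To identify it, apply $\CAT(0)$ comparison: for any fixed $T > 0$ and $n$ large enough that $t_n > T$, the point at parameter $T$ on the geodesic segment from $x$ to $v_n(t_n)$ lies within $d(v(0), v_n(0))$ of $v_n(T)$. Since $v_n \to v$ on compacts, these points converge to $v(T)$, so the segments $[x, v_n(t_n)]$ converge on compact parameter intervals to the ray $t \mapsto v(t)$, which represents $v(\infty)$; hence $v_n(t_n) \to v(\infty)$ in $\overline{X}$. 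The symmetric argument, applied to $w_n \to v$ at the parameter $-t_n \to -\infty$, handles $w_n(-t_n) \to v(-\infty)$.

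The main technical hurdle is this last step: converting the on-compacts convergence $v_n \to v$ into convergence of the footpoint $v_n(t_n)$ in $\overline{X}$ at the unbounded parameter $t_n$. The $\CAT(0)$ comparison inequality is what supplies the needed uniform control; everything else is routine unpacking of definitions.
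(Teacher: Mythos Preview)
Your argument is correct. The converse is indeed immediate from \lemref{Eberlein}, and for the forward direction your computation is sound: with $w_n = \gamma_n g^{t_n} v_n$ one has $w_n(0) = \gamma_n v_n(t_n)$ and $w_n(-t_n) = \gamma_n v_n(0)$, so the two distance estimates hold as stated. Your $\CAT(0)$ estimate is also exactly right: writing $\sigma_n$ for the unit-speed segment from $x$ to $v_n(t_n)$ and $\epsilon_n = d(v(0),v_n(0))$, convexity of $t \mapsto d(\sigma_n(t), v_n(t))$ together with the endpoint bounds $d(\sigma_n(0),v_n(0)) = \epsilon_n$ and $d(\sigma_n(L_n), v_n(L_n)) = |t_n - L_n| \le \epsilon_n$ gives $d(\sigma_n(T), v_n(T)) \le \epsilon_n$ for all $T$ in the common domain, hence $\sigma_n(T) \to v(T)$ and $v_n(t_n) \to v(\infty)$ in $\overline{X}$.

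The paper itself does not supply a proof of this corollary; it simply refers the reader to the discussion preceding Corollary~III.1.4 in Ballmann's book. Your direct argument is the standard one and is exactly what that discussion amounts to.
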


We will want two results later, which we record here.
The first is the following standard result (see \cite[Corollary III.1.5]{ballmann}, for instance, for a proof).

\begin{lemma}					\label{forward recurrents dense}
Let $X$ be a complete $\CAT(0)$ space.
If every $v \in GX$ is nonwandering, then the recurrent geodesics form a dense $G_\delta$ set in $GX$.
\end{lemma}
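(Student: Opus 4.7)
The plan is a standard Baire category argument in the complete metric space $GX$. The first step is to recast recurrence in quantitative form: a geodesic $v$ is recurrent precisely when, for every $\epsilon > 0$ and every $T > 0$, there exist $t > T$ and $\gamma \in \Isom(X)$ with $d(\gamma g^t v, v) < \epsilon$. Accordingly, I would define
\[R_{\epsilon, T} = \{v \in GX : \exists \, t > T, \, \gamma \in \Isom(X) \text{ with } d(\gamma g^t v, v) < \epsilon\},\]
so that the set of recurrent geodesics is $R = \bigcap_{n \in \N} R_{1/n, n}$.

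Each $R_{\epsilon, T}$ is open, being a union over $t$ and $\gamma$ of the open sets $\{v : d(\gamma g^t v, v) < \epsilon\}$; the openness of each of these uses that $v \mapsto \gamma g^t v$ is continuous (the isometry $\gamma$ induces an isometry of $GX$, and the flow is continuous) together with the continuity of the metric on $GX$. The density of $R_{\epsilon, T}$ is where the nonwandering hypothesis enters, and is the only substantive step. Given a nonempty open $U \subseteq GX$ and a point $v \in U$, nonwandering furnishes sequences $v_k \to v$, $t_k \to +\infty$, and $\gamma_k \in \Isom(X)$ with $\gamma_k g^{t_k} v_k \to v$. For large enough $k$ we have $v_k \in U$, $t_k > T$, and by the triangle inequality
\[d(\gamma_k g^{t_k} v_k, v_k) \le d(\gamma_k g^{t_k} v_k, v) + d(v, v_k) < \epsilon,\]
so $v_k \in R_{\epsilon, T} \cap U$, proving density.

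Combining these, each $R_{1/n, n}$ is open and dense in the complete metric space $GX$, so the Baire category theorem delivers the conclusion that $R = \bigcap_n R_{1/n, n}$ is a dense $G_\delta$ set. I do not expect a real obstacle here; the only point requiring care is phrasing the recurrence condition with strict inequality, so that the constituent sets are visibly open rather than merely $F_\sigma$, and observing that the strict-inequality formulation still captures recurrence because arbitrarily small $\epsilon$ and arbitrarily large $T$ are quantified over.
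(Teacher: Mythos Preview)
Your proposal is correct and is essentially the same argument as the paper's: the paper defines $U_{k,n} = \bigcup_{t \ge n} \bigcup_{\gamma} \{v : d(v,\gamma g^t v) < 1/k\}$, notes each is open and dense by the nonwandering hypothesis, and applies Baire to the complete space $GX$. You give slightly more detail on the density step, but the decomposition and the idea are identical.
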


\begin{proof}
For each $k,n \in \N$, define $U_{k,n} := \bigcup_{t \ge n} \bigcup_{\gamma \in \Gamma} \setp{v \in GX}{d(v, \gamma g^t v) < \frac{1}{k}}$.
Each $U_{k,n}$ is open and dense by the nonwandering hypothesis, hence $\bigcap_{k,n \in \N} U_{k_n}$ is dense $G_\delta$ in $GX$ because $GX$ is complete.
But $\bigcap_{k,n \in \N} U_{k_n}$ is precisely the set of recurrent geodesics.
Thus the conclusion of the lemma holds.
\end{proof}

The next lemma follows easily from Lemma 6.7 and Lemma 6.10 in \cite{ricks-mixing}, but we provide a proof here for the convenience of the reader.

\begin{lemma}					\label{recurrent embeddings}
Let $X$ be a proper $\CAT(0)$ space.
If $v \in GX$ is recurrent, then for every $w \in GX$ such that $w(\infty) = v(\infty)$, there is an isometric embedding $(\CS_w, w) \into (\CS_v, v')$ for some $v' \parallel v$ such that $d(v,v') \le d(v,w)$.
\end{lemma}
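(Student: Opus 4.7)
The plan is to use the recurrence of $v$ to push $w$ forward in time and pull it back by a sequence of isometries, extract a limit geodesic, and then promote this one-geodesic convergence to a convergence of whole parallel sets. Choose $(\gamma_n) \subset \Isom(X)$ and $t_n \to +\infty$ with $\gamma_n g^{t_n} v \to v$ in $GX$. Since $w(\infty) = v(\infty)$ and $X$ is $\CAT(0)$, the convex function $t \mapsto d(v(t), w(t))$ is bounded on $[0, \infty)$, hence non-increasing there, and admits a limit $L \le d(v(0), w(0)) \le d(v, w)$ (the last inequality from the supremum defining the $GX$-metric). Setting $w_n := \gamma_n g^{t_n} w$, the estimate $d(v(0), w_n(0)) \le d(v(0), \gamma_n v(t_n)) + d(v(t_n), w(t_n)) \to L$ shows the footpoints are bounded, so by properness we may pass to a subsequence with $w_n \to v'$ for some $v' \in GX$. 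The same triangle inequality applied at each $s \in \R$ forces $d(v'(s), v(s)) = L$, so $v' \parallel v$ and $d(v, v') = L$.

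For the full embedding, observe that $\phi_n := \gamma_n g^{t_n}$ restricts to an isometry $\Par_w \to \Par_{w_n}$ (parallelism is preserved by isometries), and these $\phi_n$ are $1$-Lipschitz with images of bounded sets staying bounded in $GX$ (since $d(\phi_n(u), v) \le d(u, w) + d(w_n, v)$). A diagonal argument over a countable dense subset of $\Par_w$, combined with Arzel\`a--Ascoli and properness of $GX$, refines the subsequence so that $\phi_n$ converges pointwise (and uniformly on compact sets) to an isometric embedding $\phi \colon \Par_w \to GX$ with $\phi(w) = v'$. For $u \in \Par_w$, the constant parallel gap $d(\phi_n(u)(s), w_n(s)) = d(u(0), w(0))$ passes to the limit to give $\phi(u) \parallel v'$; hence $\phi(\Par_w) \subseteq \Par_{v'} = \Par_v$.

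To descend from the parallel set to the cross section, note that each $\phi_n$ commutes with the geodesic flow $g^s$, so $\phi$ does too. Under the isometric identifications $\Par_w \cong C_w \times \R$ and $\Par_v \cong C_v \times \R$ (with the $\R$-factor corresponding to the flow direction), flow-equivariance together with the isometric embedding property forces $\phi$ to take the form $(x, s) \mapsto (f(x), s + c)$ for some constant $c \in \R$ and some isometric embedding $f \colon C_w \to C_v$. Post-composing with $g^{-c}$ produces an isometric embedding $\CS_w \to \CS_v$ sending $w$ to a geodesic $v'' \in \CS_v$ parallel to $v$, and the Pythagorean relation in $C_v \times \R$ yields $d(v, v'') \le d(v, v') = L \le d(v, w)$, proving the claim.

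The main obstacle is this promotion from the one-geodesic limit $v'$ to a full isometric embedding of $\Par_w$ landing inside $\CS_v$: securing the uniform boundedness needed for Arzel\`a--Ascoli, running the diagonal extraction, and then exploiting flow-equivariance together with the product structure of parallel sets to correct the $\R$-offset $c$ so that the image ends up in $\CS_v$ at the claimed distance from $v$.
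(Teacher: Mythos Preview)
Your proof is correct and follows essentially the same approach as the paper: use recurrence and Arzel\`a--Ascoli to extract a limiting isometric embedding, then correct by a single flow shift to land in $\CS_v$ with the required distance bound. The paper works directly with $\gamma_n g^{t_n}\res{\CS_w}$ and (implicitly via upper semicontinuity of the cross-section map) sees that the limit lands in some $\CS_u$ with $u \parallel v$ before shifting by $g^r$, whereas you pass through the full $\Par_w$ and deduce the product form $(x,s)\mapsto(f(x),s+c)$ from flow-equivariance---a slightly more explicit route to the same conclusion.
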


(Here we have adopted the notation $(\CS_w, w) \into (\CS_v, v')$ to mean $\CS_w \to \CS_v$ is an isometric embedding that sends $w \mapsto v'$.)

\begin{proof}
Let $w \in GX$ with $w(\infty) = v(\infty)$.
Since $v$ is recurrent, there exist $\gamma_n \in \Isom(X)$ and increasing $t_n \to +\infty$ such that $\gamma_n g^{t_n}(v) \to v$.
Since $w$ and $v$ are forward asymptotic, $d(\gamma_n g^{t_n}(v), \gamma_n g^{t_n}(w))$ is nonincreasing.
By Arzel\`a-Ascoli, we may pass to a subsequence for which the isometric embeddings $\gamma_n g^{t_n} \res{\CS_w} \colon \CS_w \into GX$ converge to an isometric embedding $\psi \colon (\CS_w, w) \into (\CS_u, u)$ for some $u \parallel v$.
The desired isometric embedding is now $g^r \circ \psi \colon (\CS_w, w) \into (\CS_v, g^r u)$, where $r \in \R$ is chosen so that $g^r u \in \CS_v$.
\end{proof}

A variation on \lemref{recurrent embeddings} is the following property of recurrent geodesics, which is due to Guralnik and Swenson \cite[Corollary 3.24]{gs} in the case that the sequence $(\gamma_n)$ lies completely in some discrete subgroup of $\Isom(X)$.

\begin{lemma}					\label{recurrent suspensions}
Let $X$ be a proper $\CAT(0)$ space.
Let $w \in GX$ be recurrent, so there exist $\gamma_n \in \Isom(X)$ and $t_n \to +\infty$ such that $\gamma_n g^{t_n} (w) \to w$.
Then for every $p \in \bd X$, every accumulation point of $(\gamma_n p)$ lies in $\bd \PPar_w$.
\end{lemma}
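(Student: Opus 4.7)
My plan is to pass to a subsequence with $\gamma_n p \to q$ in the cone topology on $\bd X$ and to prove $q \in \bd P_w$ by exhibiting a ray from $w(0)$ to $q$ that lies entirely in $P_w$. Let $\alpha_n$ be the geodesic ray from $w(0)$ to $\gamma_n p$. By Arzel\`a--Ascoli, after passing to a further subsequence, the $\alpha_n$ converge uniformly on compacts to a ray $\alpha$ from $w(0)$ to $q$; showing $\alpha(s) \in P_w$ for every $s \ge 0$ will force $q = \alpha(\infty) \in \bd P_w$.

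To exploit the recurrence hypothesis, I would rewrite $\alpha_n = \gamma_n \circ \rho_n$, where $\rho_n$ is the ray from $\gamma_n^{-1} w(0)$ to $p$. Since $\gamma_n g^{t_n} w \to w$ gives $d(\gamma_n^{-1} w(0), w(t_n)) = d(w(0), \gamma_n w(t_n)) \to 0$, the ray $\rho_n$ is, on any initial interval, arbitrarily close to the ray from $w(t_n)$ to $p$. Set $y_n = \rho_n(s)$ for fixed $s \ge 0$. The heart of the argument is the claim that $d(y_n, P_w) \to 0$. Granted this, $d(\alpha_n(s), \gamma_n P_w) = d(y_n, P_w) \to 0$ by isometry invariance, and since $\gamma_n P_w = P_{\gamma_n g^{t_n} w}$ with $\gamma_n g^{t_n} w \to w$, the convex sets $\gamma_n P_w$ accumulate on $P_w$ near $w(0)$; combined with $\alpha_n(s) \to \alpha(s)$, this forces $\alpha(s) \in P_w$.

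The main obstacle is establishing the claim $d(y_n, P_w) \to 0$. The key geometric input is that the angles $\theta_t := \angle_{w(t)}(p, w(\infty))$ and $\psi_t := \angle_{w(t)}(p, w(-\infty))$ are monotonic in $t$ (by convexity of Busemann functions along $w$), with $\theta_{+\infty} = \dT(p, w(\infty))$ and $\psi_{+\infty} = \pi - \dT(p, w(\infty))$, so the angle defect $\theta_{t_n} + \psi_{t_n} - \pi$ vanishes as $n \to \infty$. Informally, the initial direction of $\rho_n$ at $w(t_n)$ becomes asymptotically tangent to $P_w$, and a CAT(0) comparison---replacing $\rho_n|_{[0,s]}$ by an auxiliary ray in $P_w$ starting at $w(t_n)$ with the nearest such tangent direction---converts the vanishing defect into the desired distance bound. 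The delicate point is that the naive estimate $d(y_n, w) \le s \sin \psi_{t_n}$ obtained by comparing $\rho_n$ directly to the backward half-geodesic of $w$ vanishes only in the rank-one case $\dT(p, w(\infty)) \in \{0, \pi\}$; in higher rank one must exploit the full tangent-cone structure of $P_w$ at $w(t_n)$ rather than just $w$ itself, and verify that the chosen auxiliary ray's endpoint lies in $\bd P_w$ so that the entire auxiliary ray remains in $P_w$.
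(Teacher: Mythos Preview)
Your reduction is sound: if you can show $d(y_n,P_w)\to 0$ for each fixed $s$, then upper semicontinuity of parallel sets (if $z_n\in P_{\gamma_n g^{t_n}w}$ and $z_n\to z$ with $\gamma_n g^{t_n}w\to w$, then $z\in P_w$) does give $\alpha(s)\in P_w$.  The problem is that you never prove the claim $d(y_n,P_w)\to 0$, and your own discussion exposes the gap.  Knowing that the angle defect $\theta_{t_n}+\psi_{t_n}-\pi\to 0$ tells you the initial direction of $\rho_n$ is nearly on \emph{some} geodesic in the link $\Sigma_{w(t_n)}X$ between the $w(\pm\infty)$–directions, but it does not by itself produce a nearby direction tangent to $P_w$, nor a ray that stays in $P_w$.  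Your proposed fix---pick ``the nearest tangent direction'' to $P_w$ and then ``verify that the chosen auxiliary ray's endpoint lies in $\bd P_w$''---is exactly what is missing: nothing in the argument so far guarantees such a direction is close, or that the resulting ray remains in $P_w$ for all time.  In short, the angle asymptotics you assemble along $w$ are the right ingredients, but you have not converted them into the needed distance estimate to $P_w$.

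The paper avoids this entirely by working at the single basepoint $x=w(0)$ rather than asymptotically along $w$.  Using that $\gamma_n x_n\to x$ with $x_n=w(t_n)$, one gets $\angle_x(q,w(\infty))\ge\limsup\angle_{x_n}(p,w(\infty))=\angle(p,w(\infty))$; lower semicontinuity of the Tits angle gives the reverse inequality, so $\angle_x(q,w(\infty))=\angle(p,w(\infty))$.  Then the $\pi$-convergence theorem of Papasoglu--Swenson yields $\angle(q,w(-\infty))\le\pi-\angle(p,w(\infty))$, forcing $\angle_x(q,w(\infty))+\angle_x(q,w(-\infty))=\pi$.  That equality at one point is precisely what produces a flat half-plane bounded by $w$ containing the ray to $q$, hence $q\in\bd P_w$.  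This sidesteps both the ``auxiliary ray'' construction and any need to control $P_{\gamma_n w}$ versus $P_w$.
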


\begin{proof}
Let $q \in \bd X$ be an accumulation point of $(\gamma_n p)$.
Passing to a subsequence, we may assume $\gamma_n p \to q$.
Note that $\gamma_n (w(\infty)) \to w(\infty)$.
Now put $x = w(0)$, and for each $n$, put $x_n = w(t_n) = g^{t_n} w(0)$.
Then
\[\angle_{x} (q, w(\infty))
\ge \limsup \angle_{\gamma_n x_n} (\gamma_n p, \gamma_n w(\infty))
= \limsup \angle_{x_n} (p, w(\infty))
= \angle (p, w(\infty))\]
by standard $\CAT(0)$ geometry.
But by lower semicontinuity of $\angle$, we know
\[\angle (p, w(\infty))
= \angle (\gamma_n p, \gamma_n w(\infty))
\ge \angle (q, w(\infty))
\ge \angle_x (q, w(\infty)).\]
Therefore, $\angle_x (q, w(\infty)) = \angle (p, w(\infty))$.
Papasoglu and Swenson's $\pi$-convergence theorem (stated for a discrete group of isometries in \cite[Lemma 19]{ps}, but the proof does not use this assumption)
then shows
\[\angle_x (q, w(-\infty)) \le \angle (q, w(-\infty)) \le \pi - \angle (p, w(\infty)) = \pi - \angle_x (q, w(\infty)).\]
Since $\angle_x (w(-\infty), w(\infty)) = \pi$, we see that either $q = w(\pm \infty)$ or $q$ lies in the ideal boundary of a flat half-plane bounded by $w$, i.e.~ in either case, $q \in \bd \PPar_w$.
\end{proof}

\section{Complete approachability}

Call $v \in GX$ \defn{completely approachable} if, for every $x \in C_v$ and sequence $v_n \to v$ in $GX$, there exists $x_n \to x$ in $X$ such that each $x_n \in C_{v_n}$.
This terminology reflects the idea that every $x \in C_v$ is approachable by $x_n \in C_{v_n}$ for every $v_n \to v$.

For a metric space $Z$, write $\Closedsets(Z)$ for the space of closed subsets of $Z$, with the Hausdorff topology.

\begin{lemma}					\label{ca equivalences}
Let $X$ be a proper $\CAT(0)$ space, and let $v \in GX$.
The following are equivalent.
\begin{enumerate}
\item\label{it: ca}
$v$ is completely approachable.
\item\label{it: seq in X}
For every $x \in C_v$ and sequence $v_n \to v$ in $GX$, there exist $x_n \to x$ in $X$ such that each $x_n \in C_{v_n}$.
\item\label{it: seq in GX}
For every $w \in \CS_v$ and sequence $v_n \to v$ in $GX$, there exist $w_n \to w$ in $GX$ such that each $w_n \in \CS_{v_n}$.
\item\label{it: pt of cont}
The extended cross-section map $\cl{\CS} \colon GX \to \Closedsets(\cl{GX})$ is continuous at $v$, where $\cl{GX}$ is one-point compactification of $GX$ and $\cl{\CS}(w) := \CS_w \cup \set{\infty}$. \end{enumerate}
\end{lemma}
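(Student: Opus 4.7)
The plan is to establish the cycle of implications, taking advantage of the isometric bijection $\CS_v \to C_v$ via footpoint projection (noted in the Preliminaries) and the Hausdorff-convergence characterization of continuity in (iv). Observe first that (i) is simply the definition of ``completely approachable'' and (ii) is its verbatim restatement, so (i) $\Leftrightarrow$ (ii) is immediate.

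For (ii) $\Leftrightarrow$ (iii), I would transfer sequences between $X$ and $GX$ via footpoint projection. The direction (iii) $\Rightarrow$ (ii) is straightforward: given $x \in C_v$, let $w$ be the unique element of $\CS_v$ with $w(0) = x$; apply (iii) to obtain $w_n \in \CS_{v_n}$ with $w_n \to w$; then $x_n := w_n(0) \in C_{v_n}$ satisfies $x_n \to x$ by continuity of the footpoint map. For (ii) $\Rightarrow$ (iii), given $w \in \CS_v$, apply (ii) to $x := w(0)$ to produce $x_n \to x$ with $x_n \in C_{v_n}$, and let $w_n \in \CS_{v_n}$ be the unique geodesic with $w_n(0) = x_n$. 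To show $w_n \to w$ in $GX$, I would argue subsequentially: since the $w_n$ pass through points converging to $x$, Arzel\`a--Ascoli (using properness of $X$) extracts a sub-subsequence $w_{n_k} \to w'$. Continuity of the endpoint maps $v \mapsto v(\pm\infty) \colon GX \to \bd X$ together with the synchronization $d(w_{n_k}(t), v_{n_k}(t)) \equiv \mathrm{const}$ forces $w'$ to be parallel to $v$ with $w'(0) = x$; by uniqueness of the parallel geodesic through a given point, $w' = w$, and the usual subsequence argument yields $w_n \to w$.

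For (iii) $\Leftrightarrow$ (iv), I would unpack Hausdorff convergence $\cl{\CS}(v_n) \to \cl{\CS}(v)$ in $\Closedsets(\cl{GX})$ into its two halves: (L) every element of $\cl{\CS}(v)$ is a limit of a sequence with entries in $\cl{\CS}(v_n)$, and (U) every accumulation point of a sequence $w_n \in \cl{\CS}(v_n)$ lies in $\cl{\CS}(v)$. The direction (iv) $\Rightarrow$ (iii) is then immediate: (iii) is precisely (L) restricted to non-infinite points of $\cl{\CS}(v)$, with $\infty$ trivially approached by the constant sequence. The main obstacle is the converse (iii) $\Rightarrow$ (iv): I would show that (U) holds at every $v$ unconditionally. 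Given $w_n \in \CS_{v_n}$ subconverging to $w \in GX$ (the case $w = \infty$ being vacuous), the constancy of $d(w_n(t), v_n(t))$ in $t$ passes to the limit, so $w \in \Par_v$; and $x_n := w_n(0) \in C_{v_n}$ gives $d(x_n, v_n(t)) \ge d(x_n, v_n(0))$ for all $t \in \R$, which in the limit yields $d(w(0), v(t)) \ge d(w(0), v(0))$, placing $w(0)$ at level zero in $P_v$, i.e., $w(0) \in C_v$. Hence $w \in \CS_v$, so (U) holds; combined with (iii), which is (L), this gives the Hausdorff continuity of $\cl{\CS}$ at $v$. The delicate point is the identification of $C_v$ as a level set of nearest-point projection onto $v(\R)$, which must be carried over to the limit using standard $\CAT(0)$ continuity; everything else is routine.
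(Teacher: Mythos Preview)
Your proof is correct and follows the same approach as the paper, which is extremely terse: it notes that (i) and (ii) are identical by definition, calls (ii)$\Leftrightarrow$(iii) ``trivial'' (you spell out the footpoint-projection argument), and reduces (iii)$\Leftrightarrow$(iv) to the observation that $\CS$ is upper semicontinuous---exactly what you prove as condition (U) holding unconditionally. Your version simply fills in the details the paper omits.
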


\begin{proof}
\itemrefstar{it: seq in X} is the definition of \itemrefstar{it: ca}.
The equivalence of \itemrefstar{it: seq in GX}$\iff$\itemrefstar{it: seq in GX} is trivial, and \itemrefstar{it: seq in GX}$\iff$\itemrefstar{it: pt of cont} because $\CS$ is upper semicontinuous.
\end{proof}

It is a standard fact (see, for example, \cite[Theorem A.1]{akin97}) that every upper semicontinuous map $Y \to \mathcal{C}(Z)$, where $Y$ is a complete metric space and $Z$ a compact metric space, has a dense $G_\delta$ set of continuity points.
Thus we have the following.

\begin{lemma}					\label{completely approachables}
The completely approachable geodesics form a dense $G_\delta$ set in $GX$.
\end{lemma}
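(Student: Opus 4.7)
The proof is essentially immediate from the equivalence \itemrefstar{it: pt of cont} in \lemref{ca equivalences} combined with a general hyperspace fact. My plan is to cast the completely approachable geodesics as the continuity points of the upper semicontinuous map $\cl{\CS}$, and then quote the theorem attributed to Akin.

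First, I would confirm the ambient topological setup: since $X$ is proper (a standing hypothesis for the statement to be meaningful), the metric on $GX$ from the Preliminaries makes $GX$ proper and complete, hence locally compact, separable, and metrizable. Its one-point compactification $\cl{GX}$ is therefore a compact metrizable space, so $\Closedsets(\cl{GX})$ with the Hausdorff topology is a compact metrizable space as well. The extended cross-section map $\cl{\CS} \colon GX \to \Closedsets(\cl{GX})$ defined by $\cl{\CS}(w) = \CS_w \cup \set{\infty}$ is well-defined and, as observed in the proof of \lemref{ca equivalences}, upper semicontinuous (the addition of the point at infinity is exactly what guarantees closedness of the image in $\cl{GX}$, since $\CS_w$ itself need not be a closed subset of $GX$).

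Next I would invoke \cite[Theorem A.1]{akin97}: every upper semicontinuous map from a complete metric space into the hyperspace of closed subsets of a compact metric space has a dense $G_\delta$ set of continuity points. Applied to $\cl{\CS}$, this yields a dense $G_\delta$ subset of $GX$ consisting of continuity points. By \lemref{ca equivalences}, item \itemrefstar{it: pt of cont}, these are precisely the completely approachable geodesics, which completes the argument.

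There is no serious obstacle here; all the substantive work has been packaged into \lemref{ca equivalences}. The only minor technical points worth verifying carefully are the compact metrizability of $\cl{GX}$ (which requires properness and separability of $GX$) and the upper semicontinuity of $\cl{\CS}$ at the level needed for Akin's theorem — both of which are standard.
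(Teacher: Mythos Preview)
Your proposal is correct and follows exactly the paper's approach: the paper states just before the lemma that it is a standard fact (citing \cite[Theorem A.1]{akin97}) that every upper semicontinuous map from a complete metric space to $\Closedsets(Z)$ with $Z$ compact has a dense $G_\delta$ set of continuity points, and then invokes \lemref{ca equivalences}\itemrefstar{it: pt of cont}. One small inaccuracy in your parenthetical: $\CS_w$ \emph{is} already closed in $GX$; the point of adjoining $\infty$ is rather to have a compact target space so that the hyperspace theorem applies.
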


\begin{corollary}				\label{regular dense}
Assume every geodesic $v \in GX$ is nonwandering.
The set $\mathcal{U} \subseteq GX$ of geodesics that are both completely approachable and recurrent is dense $G_\delta$ in $GX$.
\end{corollary}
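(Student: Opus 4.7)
The plan is to deduce this corollary immediately from the two preceding lemmas together with the Baire category theorem. Specifically, \lemref{completely approachables} shows that the set $\mathcal{C} \subseteq GX$ of completely approachable geodesics is dense $G_\delta$, and \lemref{forward recurrents dense} shows that under the nonwandering hypothesis the set $\mathcal{R} \subseteq GX$ of recurrent geodesics is dense $G_\delta$.

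Next, I would invoke that $GX$ is a Baire space. This uses the fact, recorded in the Preliminaries, that the metric $d(v,w) = \sup_{t \in \R} e^{-\abs{t}} d(v(t), w(t))$ on $GX$ is complete whenever $X$ is complete; since $X$ is proper (hence complete), $GX$ is completely metrizable, and thus any countable intersection of dense $G_\delta$ sets in $GX$ is again dense $G_\delta$.

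Consequently, $\mathcal{U} = \mathcal{C} \cap \mathcal{R}$ is the intersection of two dense $G_\delta$ sets in a Baire space, hence is itself dense $G_\delta$ in $GX$. There is no real obstacle here; the content is entirely packaged into the two preceding lemmas, so the proof of the corollary is essentially a one-line citation.
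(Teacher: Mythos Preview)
Your proof is correct and matches the paper's intended approach: the corollary is stated without proof precisely because it follows by intersecting the dense $G_\delta$ sets from \lemref{forward recurrents dense} and \lemref{completely approachables} in the complete metric space $GX$.
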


We will denote the set of completely approachable geodesics by $\mathcal{A}$.

\begin{lemma}					\label{Eberlein plus approachable}
Let $v \in \mathcal{A}$ and $w \in GX$.
If $v(\infty)$ and $w(-\infty)$ are $\Isom(X)$-dual, then there is an isometric embedding $(\CS_v, v) \into (\CS_w, w)$.
\end{lemma}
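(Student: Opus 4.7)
The plan is to combine \lemref{Eberlein} with complete approachability to construct the embedding as a limit of ambient isometries of $GX$. By \lemref{Eberlein}, the duality hypothesis yields $\gamma_n \in \Isom(X)$, $t_n \to +\infty$, and $v_n \in GX$ such that $v_n \to v$ and $\phi_n(v_n) \to w$, where $\phi_n := \gamma_n \circ g^{t_n}$ is an isometry of $GX$. Complete approachability of $v$ (condition \itemref{it: seq in GX} of \lemref{ca equivalences}) then gives, for each $u \in \CS_v$, a sequence $u_n \in \CS_{v_n}$ with $u_n \to u$. Since $\phi_n$ carries $\CS_{v_n}$ bijectively onto $\CS_{\phi_n(v_n)}$, one has $\phi_n(u_n) \in \CS_{\phi_n(v_n)}$, and because $d(\phi_n(u_n), \phi_n(v_n)) = d(u_n, v_n)$ stays bounded while $\phi_n(v_n) \to w$, the sequence $\phi_n(u_n)$ is precompact in the proper metric space $GX$.

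To assemble these pointwise constructions into a single map, I would fix a countable dense subset $D \subseteq \CS_v$ containing $v$ and, for each $u \in D$, pick an approximating sequence $u_n^{(u)} \in \CS_{v_n}$ converging to $u$, taking $u_n^{(v)} := v_n$ in particular. A standard diagonal argument then extracts a subsequence of $n$'s along which $\phi_n(u_n^{(u)}) \to \psi(u) \in GX$ for every $u \in D$ simultaneously. Because each $\phi_n$ is an isometry of $GX$,
\[d(\psi(u), \psi(u')) \;=\; \lim_n d(\phi_n(u_n^{(u)}), \phi_n(u_n^{(u')})) \;=\; \lim_n d(u_n^{(u)}, u_n^{(u')}) \;=\; d(u, u'),\]
so $\psi \colon D \to GX$ is isometric, and by construction $\psi(v) = w$.

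Next I would verify that $\psi(D) \subseteq \CS_w$. Parallelism is closed in $GX$, so $\psi(u) \in \Par_w$. The cross-section condition $u_n^{(u)}(0) \in C_{v_n}$ asserts that $v_n(0)$ is the nearest point of (the image of) $v_n$ to $u_n^{(u)}(0)$; applying the isometry $\phi_n$ and letting $n \to \infty$, using continuity of nearest-point projection in $\CAT(0)$ together with $\phi_n(v_n) \to w$ and $\phi_n(u_n^{(u)}) \to \psi(u)$, one concludes $w(0)$ is the nearest point of $w$ to $\psi(u)(0)$, so $\psi(u) \in \CS_w$. Since $\psi|_D$ is isometric and $\CS_w$ is closed in the complete space $GX$, $\psi$ extends uniquely to an isometric embedding $(\CS_v, v) \into (\CS_w, w)$.

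The main technical obstacle will be ensuring that the limit lies in $\CS_w$ itself, rather than in some parallel translate of it inside $\Par_w$. Without complete approachability, an arbitrary approximating sequence $u_n \to u$ with $u_n \in \Par_{v_n}$ could have basepoints drifting along the $\R$-factor of $\Par_{v_n} \cong C_{v_n} \times \R$, and the corresponding limit would only land in $\Par_w$; it is precisely approachability that lets us keep $u_n^{(u)} \in \CS_{v_n}$ and thereby fix the correct ``height'' in the limit.
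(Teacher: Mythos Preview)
Your proof is correct and follows essentially the same approach as the paper's: both apply \lemref{Eberlein}, use complete approachability to lift $\CS_v$ through the approximating $\CS_{v_n}$, and then pass to a limit of the isometries $\gamma_n g^{t_n}\res{\CS_{v_n}}$, with the paper phrasing this more tersely via Hausdorff convergence and upper semicontinuity of $\CS$ rather than your explicit diagonal extraction and nearest-point-projection argument. One minor quibble: $\phi_n = \gamma_n \circ g^{t_n}$ is not an isometry of all of $GX$ under the metric $d(v,w) = \sup_t e^{-|t|} d(v(t),w(t))$ (the geodesic flow is not globally distance-preserving), but since you only invoke distance-preservation for pairs lying in a common parallel set---where the metric \emph{is} flow-invariant---this does not affect the argument.
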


\begin{proof}
By \lemref{Eberlein}, there exist $\gamma_n \in \Gamma$, $t_n \to +\infty$, and $v_n \in GX$ such that $v_n \to v$ and $\gamma_n g^{t_n} v_n \to w$.
Because $v$ is completely approachable, $\CS_{v_n} \!\!\to \CS_v$.
By upper semicontinuity of $\CS$ we find
$\CS_w \supseteq \lim \gamma_n g^{t_n} \CS_{v_n}$.
Since each $\gamma_n g^{t_n}$ is an isometry on $GX$ which preserves cross sections, the limit of isometries $\varphi_n = \gamma_n g^{t_n} \res{\CS_{v_n}}$ is the desired isometric embedding
$(\CS_v, v) \into (\CS_w, w)$.
\end{proof}

\begin{corollary}				\label{approachable plus nonwandering}
If $v \in GX$ is completely approachable and nonwandering then for every $w \in GX$ with $w(\infty) = v(\infty)$, there is an isometric embedding $(\Par_v, v) \into (\Par_w, w)$.
\end{corollary}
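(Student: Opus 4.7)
My plan is to apply \lemref{Eberlein} in the direction opposite to \lemref{Eberlein plus approachable}---transporting $w$ to $v$ rather than $v$ to $w$---and then pull the resulting cross-section embeddings back through inverse isometries using complete approachability of $v$.

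First I would show that nonwandering of $v$ forces $v(\infty)$ and $v(-\infty)$ to be $\Isom(X)$-dual. Taking the defining sequence $v_n \to v$, $t_n \to +\infty$, $\gamma_n \in \Isom(X)$ with $\gamma_n g^{t_n} v_n \to v$, and setting $x = v(0)$, a standard cone-topology argument in proper $\CAT(0)$ gives $\gamma_n^{-1} x \to v(\infty)$ in $\overline{X}$ (from $d(\gamma_n^{-1} x, v_n(t_n)) = d(x, \gamma_n v_n(t_n)) \to 0$ while $v_n(t_n) \to v(\infty)$) and $\gamma_n x \to v(-\infty)$ (from $\gamma_n x$ lying within $o(1)$ of $\gamma_n v_n(0) = (\gamma_n g^{t_n} v_n)(-t_n)$, a point distance $t_n$ backward along $\gamma_n v_n$ from $\gamma_n v_n(t_n) \to v(0)$, whose backward endpoint tends to $v(-\infty)$).

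Since $w(\infty) = v(\infty)$, the pair $w(\infty), v(-\infty)$ is then $\Isom(X)$-dual, so \lemref{Eberlein} (applied with the roles of $v$ and $w$ interchanged) yields $\eta_n \in \Isom(X)$, $s_n \to +\infty$, and $w_n \to w$ with $\eta_n g^{s_n} w_n \to v$. For each $u \in \CS_v$, complete approachability of $v$ together with $\eta_n g^{s_n} w_n \to v$ (via \lemref{ca equivalences}) provides $\tilde u_n \in \CS_{\eta_n g^{s_n} w_n}$ with $\tilde u_n \to u$. Setting $u'_n := (\eta_n g^{s_n})^{-1} \tilde u_n \in \CS_{w_n}$, properness of $GX$ gives a convergent subsequence whose limit sits in $\CS_w$ by upper semicontinuity of $\CS$ along $w_n \to w$. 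The isometry property of each $(\eta_n g^{s_n})^{-1}$ restricted to its cross section passes to the limit, yielding an isometric embedding $\phi \colon (\CS_v, v) \to (\CS_w, w)$; the choice $\tilde u_n := \eta_n g^{s_n} w_n$ (which lies in its own cross section) for $u = v$ gives $u'_n = w_n \to w$, so $\phi(v) = w$. Finally, the product structure $\Par_v \cong \CS_v \times \R$ (with the flow as $\R$-translation and the $GX$-metric agreeing with the $X$-metric on $P_v$ via footpoint projection) lets me extend $\phi$ to $\Phi \colon (\Par_v, v) \to (\Par_w, w)$ by $\Phi(g^r u) := g^r \phi(u)$, still an isometric embedding sending $v$ to $w$.

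The main obstacle is the well-definedness of $\phi$: a priori $u'_n$ depends on both the choice of $\tilde u_n$ and the subsequence extracted. I would handle this by observing that two approximations $\tilde u_n, \tilde u''_n \to u$ satisfy $d(u'_n, u''_n) = d(\tilde u_n, \tilde u''_n) \to 0$ by the isometry property, so all subsequential limits of $u'_n$ coincide; a diagonal argument over a countable dense subset of $\CS_v$ then produces a single subsequence along which $\phi$ is consistently defined on all of $\CS_v$.
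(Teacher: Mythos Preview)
Your proof is correct and is essentially the paper's intended argument: the corollary is stated without proof, being a minor adaptation of the preceding \lemref{Eberlein plus approachable}. Since only $v$ (not $w$) is known to lie in $\mathcal{A}$, one must apply \lemref{Eberlein} in the swapped direction so that the limiting sequence approaches the approachable geodesic $v$, and then run the same limit-of-isometries argument with complete approachability at the target and upper semicontinuity at the source---exactly as you do, and your diagonal-subsequence justification is in fact more explicit than the paper's own treatment in the lemma.
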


\begin{lemma}					\label{reciprocal embeddings}
Let $Y$ and $Z$ be proper metric spaces, and let $y_0 \in Y$ and $z_0 \in Z$.
If both $f \colon (Y,y_0) \to (Z,z_0)$ and $g \colon (Z,z_0) \to (Y,y_0)$ are isometric embeddings, then both $f$ and $g$ are isometries.
\end{lemma}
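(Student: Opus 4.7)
The plan is to reduce the statement to the following auxiliary claim: any isometric embedding $h \colon Y \to Y$ of a proper metric space that fixes some point is surjective. Granting this, I would apply it to the two compositions $g \circ f \colon (Y, y_0) \to (Y, y_0)$ and $f \circ g \colon (Z, z_0) \to (Z, z_0)$, which are isometric self-embeddings fixing the respective basepoints. Surjectivity of $g \circ f$ forces $g$ to be surjective, and a surjective isometric embedding is an isometry; the symmetric argument for $f \circ g$ shows that $f$ is an isometry too.

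To prove the auxiliary claim, I would argue by contradiction. Suppose $h \colon Y \to Y$ is an isometric embedding with $h(y_0) = y_0$ but $h(Y) \neq Y$. First, observe that $h(Y)$ is closed in $Y$: properness implies $Y$ is complete, $h$ is an isometry onto $h(Y)$, so $h(Y)$ is complete and therefore closed. Thus for any $y \in Y \setminus h(Y)$, the quantity $\epsilon := d(y, h(Y))$ is strictly positive.

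The key step is then to examine the orbit $y, h(y), h^2(y), \ldots$. For integers $n > m \geq 0$, writing $h^n(y) = h^m(h^{n-m}(y))$ and using that $h^m$ is an isometric embedding gives
\[d(h^n(y), h^m(y)) = d(h^{n-m}(y), y) \ge \epsilon,\]
because $h^{n-m}(y) \in h(Y)$. So the orbit is $\epsilon$-separated and admits no convergent subsequence. On the other hand, because $h$ fixes $y_0$, we have $d(h^n(y), y_0) = d(h^n(y), h^n(y_0)) = d(y, y_0)$ for every $n$, so the orbit lies in the closed ball of radius $d(y, y_0)$ about $y_0$. Properness of $Y$ makes this ball compact, contradicting the $\epsilon$-separation.

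There is no serious obstacle here; the only points requiring care are verifying that $h(Y)$ is closed (which relies on properness, hence completeness, of $Y$) and observing that the basepoint-fixing hypothesis is precisely what confines the iterates to a compact set. Without it the statement fails in simple examples like a translation of $\Z$, so both hypotheses of the lemma are genuinely used.
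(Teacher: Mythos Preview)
Your proof is correct and follows essentially the same approach as the paper: both consider the self-embedding $g \circ f$ fixing $y_0$ and show it is surjective, whence $g$ is surjective. The paper does this by restricting $g \circ f$ to each compact ball $\overline{B}_Y(y_0,R)$ and citing the standard fact (Burago--Burago--Ivanov, Theorem~1.6.14) that an isometric self-embedding of a compact metric space is surjective; your orbit argument is precisely a self-contained proof of that fact in this setting.
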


\begin{remark}
It may be that $g \neq f^{-1}$.
\end{remark}

\begin{proof}
The composition $g \circ f$ is an isometry on each closed metric ball $\cl{B}_Y(y_0,R)$ in $Y$ by compactness \cite[Theorem 1.6.14]{burago}, hence $g$ is surjective.
Similarly for $f$.
\end{proof}

\begin{corollary}				\label{canonical parallel set on horosphere}
Assume every geodesic $v \in GX$ is nonwandering.
If $v,w \in \mathcal{A}$ and $v(\infty) = w(\infty)$, then there is an isometry $(\Par_v, v) \to (\Par_w, w)$.
\end{corollary}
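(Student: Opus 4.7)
The proof should be essentially a direct bookkeeping exercise combining the two immediately preceding results: \corref{approachable plus nonwandering} (which produces an isometric embedding of $\Par_v$ into $\Par_w$ whenever $v$ is completely approachable and nonwandering and $v(\infty) = w(\infty)$) and \lemref{reciprocal embeddings} (which upgrades mutual pointed isometric embeddings between proper metric spaces into isometries). The only real content is checking that both geodesics fulfill both hypotheses, and that the parallel sets are proper so the reciprocal-embedding lemma applies.

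The plan is as follows. First, since every geodesic in $GX$ is nonwandering by hypothesis, both $v$ and $w$ are nonwandering; and since $v,w \in \mathcal{A}$, both are completely approachable. Applying \corref{approachable plus nonwandering} to $v$ (using $v(\infty) = w(\infty)$) yields an isometric embedding
\[f \colon (\Par_v, v) \hookrightarrow (\Par_w, w).\]
Applying the same corollary with the roles of $v$ and $w$ swapped yields an isometric embedding
\[g \colon (\Par_w, w) \hookrightarrow (\Par_v, v).\]

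Next, I need to verify that $\Par_v$ and $\Par_w$ are proper so \lemref{reciprocal embeddings} applies. Under footpoint projection, $\Par_v$ is isometric to $P_v \subseteq X$, which is a closed convex subset of the proper metric space $X$, hence proper; likewise for $\Par_w$. Therefore \lemref{reciprocal embeddings}, applied to the pointed isometric embeddings $f$ and $g$, shows that both $f$ and $g$ are isometries. In particular $f \colon (\Par_v, v) \to (\Par_w, w)$ is the desired isometry.

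There is essentially no obstacle here: the only minor subtlety is that the two isometric embeddings produced by \corref{approachable plus nonwandering} need not be inverses of each other, but \lemref{reciprocal embeddings} is stated precisely to accommodate this. All of the genuine work, namely producing the embeddings out of the duality condition and the complete-approachability hypothesis, has already been absorbed into the previous lemmas.
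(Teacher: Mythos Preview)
Your proof is correct and is exactly the argument the paper intends: the corollary is stated without proof precisely because it follows immediately from \corref{approachable plus nonwandering} applied in both directions together with \lemref{reciprocal embeddings}, and you have verified the only nontrivial hypothesis (properness of $\Par_v$ and $\Par_w$) needed to invoke the latter.
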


\section{Lifting convergent sequences}

A map $f \colon X \to Y$ is called \defn{open} at $x \in X$ if, for every open neighborhood $U$ of $x$ in $X$, the image $f(U)$ contains an open neighborhood of $f(x)$ in $Y$.

\begin{lemma}					\label{open-map equivalences}
Let $f \colon X \to Y$ be a continuous map between metric spaces, and let $x$ be a point of $X$.
The following are equivalent.
\begin{enumerate}
\item  $f$ is open at $x$.
\item  For every open neighborhood $U$ of $x$ in $X$, the image $f(U)$ contains an open neighborhood of $f(x)$ in $Y$.
\item  For every sequence $y_n \to f(x)$ in $Y$, there are subsequences $n_k$, and $x_k \in X$ with $f(x_k) = y_{n_k}$, and $x_k \to x$.
\item  For every sequence $y_n \to f(x)$ in $Y$, there exists a sequence $x_n \to x$ in $X$ such that $f(x_n) = y_n$ for all $n$.
\end{enumerate}
\end{lemma}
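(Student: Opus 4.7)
The plan is to begin by noting that conditions (1) and (2) are literally the same statement: (2) is verbatim the definition of ``$f$ is open at $x$'' given immediately before the lemma, so (1)$\iff$(2) is free. The real work is the chain (2)$\Rightarrow$(4)$\Rightarrow$(3)$\Rightarrow$(2), where the easy direction (4)$\Rightarrow$(3) is just $n_k = k$.

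For (2)$\Rightarrow$(4), I would exhaust neighborhoods of $x$ by the balls $U_k = B(x, 1/k)$ and apply (2) to each to extract open neighborhoods $V_k$ of $f(x)$ with $V_k \subseteq f(U_k)$. Given a sequence $y_n \to f(x)$, one can find integers $N_1 \le N_2 \le \dotsb$ such that $y_n \in V_k$ for $n \ge N_k$; then for each $n \ge N_1$ setting $k(n) = \max\{k : n \ge N_k\}$ allows one to pick a preimage $x_n \in U_{k(n)}$ of $y_n$. Since $k(n) \to \infty$, we have $x_n \to x$. (For the finitely many indices with $n < N_1$, either $y_n$ already has a preimage and we pick one arbitrarily, or we simply reindex past them, since convergence is a tail property.)

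For (3)$\Rightarrow$(2), I would argue by contrapositive: if (2) fails, there is an open neighborhood $U$ of $x$ such that $f(U)$ contains no open neighborhood of $f(x)$, so for each $n$ one may select $y_n \in B(f(x), 1/n) \setminus f(U)$. This gives $y_n \to f(x)$, but any preimage $x_k$ of any $y_{n_k}$ must lie outside $U$. Since $U$ is a neighborhood of $x$, no such subsequence can converge to $x$, contradicting (3). I do not anticipate a serious obstacle here; the lemma is a standard point-set fact, and the only mild bookkeeping issue is the possibility that the initial terms of $(y_n)$ lack preimages in the (2)$\Rightarrow$(4) step, which is handled by observing that (2) applied to $U = X$ forces $y_n \in f(X)$ eventually.
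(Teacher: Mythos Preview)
The paper leaves this proof as an exercise to the reader, so there is nothing to compare against; your argument is correct and supplies exactly the standard details.

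Two minor bookkeeping remarks. First, in your (ii)$\Rightarrow$(iv) step you should arrange $N_k\to\infty$ (e.g.\ take the $N_k$ strictly increasing, which is always possible) so that $k(n)=\max\{k:n\ge N_k\}$ is actually finite. Second, you are right to flag the issue with the initial terms: as literally written, (iv) is a hair stronger than (i)--(iii), since nothing prevents $y_1\notin f(X)$ (e.g.\ $f$ the inclusion $[0,1]\hookrightarrow\mathbb{R}$, $x=\tfrac12$, $y_1=10$, $y_n=\tfrac12$ for $n\ge 2$). Your resolution---applying (ii) with $U=X$ to get $y_n\in f(X)$ eventually and then treating (iv) as a tail condition---is the intended reading.
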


\begin{proof}
The proof is straightforward and left as an exercise to the reader.
\end{proof}

Our interest in open maps comes from \thmref{conditional Fubini}, which allows us to take dense $G_\delta$ slices of dense $G_\delta$ sets.
The following lemma is the dense open set version.

\begin{lemma}
Let $X$ and $Y$ be complete metric spaces with $X$ separable.
Let $C \subseteq X \times Y$ and let $U \subseteq C$ be relatively open and dense in $C$.
Let $\pi_Y \colon X \times Y \to Y$ be coordinate projection onto $Y$.
Assume the restriction $\doubleres{\pi_Y} \colon C \to \pi_Y (C)$ is open at every point of $U$.
Then
\begin{equation*}
Y_U^C := \setp{y \in Y}{(X \times \set{y}) \cap U \textnormal{ is (open and) dense in } (X \times \set{y}) \cap C}
\end{equation*}
contains a dense $G_\delta$ subset of $Y$.
\end{lemma}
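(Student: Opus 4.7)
The plan is to convert the statement into a Baire category argument in $Y$, exploiting the separability of $X$. Fix a countable basis $\{V_m\}$ for the topology of $X$. For each $m$, set $W_m := (V_m \times Y) \cap U$, $A_m := \pi_Y(W_m) \subseteq Y$, and $B_m := \pi_Y((V_m \times Y) \cap C) \subseteq Y$, so that $y \in A_m$ (respectively $B_m$) precisely when $(V_m \times \{y\})$ meets $U$ (respectively $C$). Let $F_m := (Y \setminus B_m) \cup A_m$. A direct basis argument then gives $\bigcap_m F_m \subseteq Y_U^C$: for $y \in \bigcap_m F_m$ and $(x,y) \in C$, any $X$-neighborhood of $x$ contains some $V_m \ni x$, so $y \in B_m$ forces $y \in A_m$, producing a point of $U$ in $V_m \times \{y\}$.

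The main step is then to show that each $F_m$ contains an open dense subset of $Y$. Two observations drive this. First, since $U$ is dense in $C$ and $(V_m \times Y) \cap C$ is relatively open in $C$, the set $W_m = U \cap (V_m \times Y) \cap C$ is dense in $(V_m \times Y) \cap C$; projecting to $Y$ gives $B_m \subseteq \overline{A_m}$, where the closure is taken in $Y$. Second, since $W_m$ is open in $C$ and consists of points at which the restriction $\pi_Y \colon C \to \pi_Y(C)$ is open, the image $A_m = \pi_Y(W_m)$ is relatively open in $\pi_Y(C)$, so there is an open set $O_m \subseteq Y$ with $A_m = O_m \cap \pi_Y(C)$.

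Setting $G_m := O_m \cup (Y \setminus \overline{A_m})$ gives an open subset of $Y$. It is dense: any open $V \subseteq Y$ either avoids $\overline{A_m}$, hence lies in $G_m$, or meets $A_m \subseteq O_m$, hence meets $G_m$. And $G_m \subseteq F_m$: if $y \in G_m$ lies outside $\overline{A_m}$, then $y \notin B_m$ so $y \in F_m$; while if $y \in O_m$, then either $y \notin \pi_Y(C)$ (hence $y \notin B_m$, so $y \in F_m$) or $y \in O_m \cap \pi_Y(C) = A_m \subseteq F_m$. Applying the Baire category theorem in the complete metric space $Y$ to the sequence $(G_m)$ then produces a dense $G_\delta$ subset $\bigcap_m G_m \subseteq \bigcap_m F_m \subseteq Y_U^C$, which is the desired conclusion.

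The crux of the argument is the relative openness of $A_m$ in $\pi_Y(C)$, and this is exactly where the openness hypothesis on $\pi_Y$ at points of $U$ is essential. Without it, $A_m$ could fail to be the trace on $\pi_Y(C)$ of an open set of $Y$, and the construction of the ambient open set $O_m \subseteq Y$ used in building $G_m$ would break down.
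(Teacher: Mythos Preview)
Your proof is correct and follows essentially the same approach as the paper: both fix a countable basis $\{V_m\}$ for $X$, identify the obstruction sets $E_m = B_m \setminus A_m$ (your $Y \setminus F_m$), and use the openness hypothesis to see that $A_m = \pi_Y(W_m)$ is relatively open in $\pi_Y(C)$ together with density of $U$ to conclude. The only difference is presentational---the paper argues directly that each $E_m$ is nowhere dense in $Y$, whereas you build the explicit open dense set $G_m \subseteq F_m$; your version makes the passage from ``nowhere dense in $B_m$'' to ``nowhere dense in $Y$'' more transparent.
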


\begin{proof}
Let $(V_n)$ be a countable basis for $X$.
Let
\[E_n = \setp{y \in Y}{(V_n \times \set{y}) \cap U = \emptyset \text{ but }
(V_n \times \set{y}) \cap C \neq \emptyset}.\]
Then $Y \setminus Y_U^C = \bigcup_n E_n$.

Now let $C_n = (V_n \times Y) \cap C$ and $U_n = (V_n \times Y) \cap U$.
Notice that $E_n = f_n (C_n) \setminus f_n (U_n)$, where $f_n = \doubleres{\pi_Y} \res{C_n}$ is the restriction of $\doubleres{\pi_Y}$ to $C_n \to \pi_Y (C)$.
Since $V_n \times Y$ is open, $U_n$ is relatively open and dense in $C_n$; moreover, $f_n$ is continuous and open at every point of $U$.
Thus $f_n (U_n)$ is relatively open and dense in $f_n(C_n)$.
It follows that $E_n$ is relatively nowhere dense in $f_n(C_n)$, hence nowhere dense in $Y$.
Thus $Y \setminus Y_U^C$ is the countable union of nowhere dense sets.
\end{proof}

Taking countable intersections we obtain the following.

\begin{theorem}					\label{conditional Fubini}
Let $X$ and $Y$ be complete metric spaces with $X$ separable.
Let $C \subseteq X \times Y$ and let $A \subseteq C$ contain a subset which is dense $G_\delta$ in $C$.
Let $\pi_Y \colon X \times Y \to Y$ be coordinate projection onto $Y$.
Assume the restriction $\doubleres{\pi_Y} \colon C \to \pi_Y (C)$ is open at every point of $C$.
Then
\begin{equation*}
Y_A^C := \setp{y \in Y}{(X \times \set{y}) \cap A \textnormal{ contains a dense $G_\delta$ subset of } (X \times \set{y}) \cap C}
\end{equation*}
contains a dense $G_\delta$ subset of $Y$.
\end{theorem}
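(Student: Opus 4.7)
The plan is to reduce the theorem to a countable application of the preceding lemma, noting that $Y$ is itself a Baire space because it is complete.

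First I would unwind the hypothesis that $A$ contains a subset which is dense $G_\delta$ in $C$: per the convention set out in the introduction, this means there exist sets $U_n$ ($n \in \N$), each relatively open and dense in $C$, with $\bigcap_n U_n \subseteq A$. The openness hypothesis on $\doubleres{\pi_Y}\colon C \to \pi_Y(C)$ holds at every point of $C$, hence in particular at every point of each $U_n$. Thus the preceding lemma applies to each $U_n$ separately, producing a dense $G_\delta$ set $H_n \subseteq Y$ with $H_n \subseteq Y_{U_n}^C$.

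Second, I would intersect: set $H := \bigcap_n H_n$. Because $Y$ is a complete metric space and therefore Baire, and because dense $G_\delta$ sets are closed under countable intersection (as recalled in the introduction), $H$ is a dense $G_\delta$ subset of $Y$. It remains to show $H \subseteq Y_A^C$. For $y \in H$, the definition of $Y_{U_n}^C$ gives that $(X \times \set{y}) \cap U_n$ is relatively open and dense in $(X \times \set{y}) \cap C$ for every $n$. Hence
\[
\bigcap_n \bigl((X \times \set{y}) \cap U_n\bigr) \;\subseteq\; (X \times \set{y}) \cap A,
\]
and the set on the left is by construction a countable intersection of dense open subsets of $(X \times \set{y}) \cap C$, i.e.\ a dense $G_\delta$ subset of it. This places $y$ in $Y_A^C$ and completes the argument.

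The only subtlety to keep an eye on is the convention for ``dense $G_\delta$''. If one read the conclusion as asserting that $(X \times \set{y}) \cap A$ contains a $G_\delta$ set that is topologically dense in $(X \times \set{y}) \cap C$, one would need the slice $(X \times \set{y}) \cap C$ to be Baire, which is not obvious since $C$ is only an arbitrary subset of $X \times Y$. Because the paper consistently uses ``dense $G_\delta$'' to mean a countable intersection of dense open sets, no such slicewise Baire property is required, and the proof is essentially a bookkeeping argument wrapping the preceding lemma.
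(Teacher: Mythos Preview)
Your proof is correct and follows exactly the approach the paper intends: the paper's own proof is the single sentence ``Taking countable intersections we obtain the following,'' and you have accurately unpacked that into the obvious argument (write the dense $G_\delta$ as $\bigcap_n U_n$, apply the preceding lemma to each $U_n$, intersect the resulting $H_n$'s in $Y$). Your remark about the paper's convention that ``dense $G_\delta$'' means a countable intersection of dense opens is also on point and explains why no Baire hypothesis on the slices is needed.
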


\begin{remark}
The slice $(X \times \set{y}) \cap A$ may be empty for $y \in Y_A^C$, but only if $(X \times \set{y}) \cap C$ is empty.
Thus for many applications one must show that $(X \times \set{y}) \cap C$ is not empty for some dense $G_\delta$ set of $y \in Y$, and then one finds that the set
\begin{align*}
\hat Y_A^C := \{y \in Y \mid (X \times \set{y}) \cap A \textnormal{ is} & \textnormal{ nonempty and contains}  \\ &\textnormal{a dense } G_\delta \textnormal{ subset of } (X \times \set{y}) \cap C\}
\end{align*}
contains a dense $G_\delta$ subset of $Y$.
\end{remark}

Our application for \thmref{conditional Fubini} is to actually to the forward-endpoint map $GX \to \BX$ taking $v \mapsto v(\infty)$.
The topological embedding $\emb \colon GX \to \tpcX$ given by $\emb(v) = (v(0), v(-\infty), v(\infty))$ provides the ambient product structure, and the following lemma shows that the map is open at every point of $GX$.

\begin{lemma}					\label{forward-endpoint map is open}
Let $X$ be a proper, geodesically complete $\CAT(0)$ space.
Let $v \in GX$ and $p=v(\infty)$.
Let $p_n \in \BX$ with $p_n \to p$.
Then there are subsequences $n_k$, and $v_k \in GX$ with $v_k(\infty)=p_{n_k}$, and $v_k \to v$.
\end{lemma}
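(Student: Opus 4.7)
The plan is a direct construction by diagonalization, using geodesic completeness to extend a natural candidate ray (from a point on $v$ toward $p_n$) to a full geodesic of $GX$.

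For each fixed $M>0$, the map $t\mapsto v(-M+t)$ is the unique geodesic ray from $v(-M)$ to $p=v(\infty)$. Since $X$ is proper, the cone topology on $\BX$ is characterized by uniform-on-compacts convergence of rays from any fixed basepoint, so the hypothesis $p_n\to p$ gives: the unique rays $\alpha_n^{(M)}$ from $v(-M)$ to $p_n$ (parametrized with $\alpha_n^{(M)}(0)=v(-M)$) satisfy $\alpha_n^{(M)}(t)\to v(-M+t)$ uniformly on compact subsets of $[0,\infty)$ as $n\to\infty$. By geodesic completeness I will extend each $\alpha_n^{(M)}$ backward through $v(-M)$ to a full (not necessarily unique) geodesic, and reparametrize it as $v_n^{(M)}\in GX$ with $v_n^{(M)}(-M)=v(-M)$ and $v_n^{(M)}(t)=\alpha_n^{(M)}(t+M)$ for all $t\ge -M$; in particular $v_n^{(M)}(\infty)=p_n$.

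The diagonalization is then routine. For each $k\in\N$, the convergence above (applied with $M=k$ on the interval $[0,2k]$) furnishes some $N_k$ with $d(v_n^{(k)}(t),v(t))<1/k$ for all $t\in[-k,k]$ whenever $n\ge N_k$. Choose $n_k$ strictly increasing with $n_k\ge N_k$ and set $v_k:=v_{n_k}^{(k)}$. By construction $v_k(\infty)=p_{n_k}$, and for any $T>0$ and any $k>T$ the estimate $d(v_k(t),v(t))<1/k$ holds throughout $[-T,T]$. Hence $v_k\to v$ uniformly on compact subsets of $\R$, which is precisely convergence in $GX$.

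The only nontrivial ingredients are (i) the characterization of the cone topology in terms of ray convergence, standard in proper $\CAT(0)$ spaces (via Arzel\`a--Ascoli and uniqueness of rays from a point to a boundary point), and (ii) geodesic completeness, used to extend each forward ray $\alpha_n^{(M)}$ to a full geodesic on $\R$. The backward extension past $v(-M)$ is in general non-unique, but this is harmless since the behavior of $v_k$ on $(-\infty,-k)$ is irrelevant once $k\to\infty$. I do not foresee any real obstacle beyond these standard tools.
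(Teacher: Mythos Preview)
The proposal is correct and follows essentially the same approach as the paper: anchor a geodesic at $v(-k)$, send it toward $p_n$, extend backward via geodesic completeness, and diagonalize over $k$. Your write-up is in fact more careful than the paper's own proof, which is terser about the uniform-on-compacts ray convergence and says only that the chosen $v_k$ has $d(v_k(0),v(0))<1/k$; your explicit control on all of $[-k,k]$ makes the final convergence step transparent.
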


\begin{proof}
Write $x_k=v(-k)$.
For each $k,n$, choose $w_{k,n}$ such that $w_{k,n}(-k)=x_k$ and $w_{k,n}(\infty)=p_n$.
So for each fixed $k$, the geodesics $w_{k,n}$ keep $w_{k,n}(-k)=x_k$ while $w_{k,n}(\infty) \to p$.
Thus we may find for each $k$ some $n_k \ge k$ such that $w_{k,n_k}(0) < 1/k$.
It follows that $v_k := w_{k,n_k} \to v$.
\end{proof}

Recall from \lemref{regular dense} that the set $\mathcal{U} := \setp{v \in \mathcal{A}}{v \text{ is recurrent}}$ is a dense $G_\delta$ subset of $GX$.
For $p \in \BX$, let $GX_p$ be the set of $v \in GX$ such that $v(\infty) = p$.

\begin{corollary}				\label{regular dense slices}
Assume $X$ is geodesically complete and every geodesic $v \in GX$ is nonwandering.
There is a set $b\mathcal{U}$ in $\BX$ that contains a dense $G_\delta$ subset of $\BX$, such that for every $p \in b\mathcal{U}$ the set $\mathcal{U}_p := \mathcal{U} \cap GX_p$ contains a subset that is dense $G_\delta$ in $GX_p$.
\end{corollary}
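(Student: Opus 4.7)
The plan is to apply Theorem~\ref{conditional Fubini} (conditional Fubini) directly, with the role of ``$X \times Y$'' played by $\cl{X} \times \BX \times \BX$ (via the embedding $\emb$), and the role of $Y$ played by the last $\BX$ factor.

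More precisely, I would take the ``$X$'' of the Fubini theorem to be $\cl{X} \times \BX$ and the ``$Y$'' to be $\BX$. Since $X$ is proper, $\cl{X}$ is compact metrizable, so $\cl{X} \times \BX$ is compact metrizable, hence separable and complete; and $\BX$ is compact metrizable, hence complete. The topological embedding $\emb \colon GX \to \cl{X} \times \BX \times \BX$ given by $\emb(v) = (v(0), v(-\infty), v(\infty))$ lets me take $C = \emb(GX)$ and $A = \emb(\mathcal{U})$. Under this identification the coordinate projection $\pi_{\BX}$ onto the last factor is precisely the forward-endpoint map $v \mapsto v(\infty)$.

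I would then verify the three hypotheses of Theorem~\ref{conditional Fubini}. First, $\mathcal{U}$ is dense $G_\delta$ in $GX$ by \corref{regular dense} (using the nonwandering hypothesis), so $A$ contains a dense $G_\delta$ subset of $C$. Second, the restriction of $\pi_{\BX}$ to $\emb(GX)$ is open at every point of $\emb(GX)$: this is exactly the content of \lemref{forward-endpoint map is open}, which gives the sequential criterion (iii) of \lemref{open-map equivalences} for the forward-endpoint map. Third, $GX$ is complete in the metric defined in the Preliminaries.

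Taking $b\mathcal{U} := Y_A^C$ as in Theorem~\ref{conditional Fubini}, we conclude that $b\mathcal{U}$ contains a dense $G_\delta$ subset of $\BX$, and that for every $p \in b\mathcal{U}$ the slice $(\cl{X} \times \BX \times \set{p}) \cap \emb(\mathcal{U}) = \emb(\mathcal{U}_p)$ contains a dense $G_\delta$ subset of $(\cl{X} \times \BX \times \set{p}) \cap \emb(GX) = \emb(GX_p)$. Transporting back across the embedding $\emb$ gives the conclusion. There is no real obstacle here---the work has been done in the preceding lemmas; the only thing to check is that $GX_p$ is nonempty for every $p \in \BX$ (so that the issue flagged in the Remark after Theorem~\ref{conditional Fubini} does not arise), and this follows because $X$ is proper and geodesically complete: any geodesic ray to $p$ extends backward to a geodesic line in $GX_p$.
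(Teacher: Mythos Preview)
Your proof is correct and is exactly the argument the paper intends: its one-line proof ``Combine \corref{regular dense}, \lemref{forward-endpoint map is open}, and \thmref{conditional Fubini}'' is precisely what you have spelled out, using the embedding $\emb$ and projecting to the last $\BX$ factor. Your extra remark that $GX_p \neq \emptyset$ for every $p$ (by geodesic completeness) is a useful addendum but not strictly needed for the statement as written.
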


\begin{proof}
Combine \corref{regular dense}, \lemref{forward-endpoint map is open}, and \thmref{conditional Fubini}.
\end{proof}

\section{Isometric Transitivity}

\begin{lemma}					\label{isometric transitivity}
Let $X$ be a proper, geodesically complete $\CAT(0)$ space that satisfies the duality condition.
Then the isometry group $\Isom(\CS_v)$ is transitive for all $v \in GX$ such that $v(\infty) \in b\mathcal{U}$.
\end{lemma}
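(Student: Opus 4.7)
The plan is to produce, for each $w \in \CS_v$, a flow-equivariant surjective isometry $\phi_w$ of $\Par_v$ sending $v$ to $w$. Flow-equivariance will force $\phi_w$ to preserve the metric splitting $\Par_v \cong C_v \times \R$ and hence restrict to an isometry of $\CS_v$, after which transitivity of $\Isom(\CS_v)$ follows by forming $\phi_{w_2} \circ \phi_{w_1}^{-1}$ for any $w_1, w_2 \in \CS_v$.

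Write $p = v(\infty) \in b\mathcal{U}$. By Corollary~\ref{regular dense slices}, the set $\mathcal{U}_p$ is dense in $GX_p$, so we fix $u \in \mathcal{U}_p$ together with a sequence $u_n \in \mathcal{U}_p$ with $u_n \to v$. Since $u, u_n \in \mathcal{A}$ share the forward endpoint $p$, Corollary~\ref{canonical parallel set on horosphere} produces flow-equivariant isometries $\sigma_n \colon (\Par_u, u) \to (\Par_{u_n}, u_n)$. Since each $u_n$ is completely approachable and (being recurrent) nonwandering, Corollary~\ref{approachable plus nonwandering} produces flow-equivariant isometric embeddings $f_n \colon (\Par_{u_n}, u_n) \to (\Par_v, v)$. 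The composition $F_n := f_n \circ \sigma_n$ is a flow-equivariant isometric embedding $(\Par_u, u) \to (\Par_v, v)$ with $F_n(u) = v$ independent of $n$; by Arzel\`a--Ascoli a subsequence converges to a flow-equivariant isometric embedding $f \colon (\Par_u, u) \to (\Par_v, v)$.

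For the reciprocal direction, Lemma~\ref{recurrent embeddings} (applied with recurrent geodesic $u_n$ and arbitrary geodesic $v$) yields an isometric embedding $h_n \colon (\CS_v, v) \to (\CS_{u_n}, u'_n)$ for some $u'_n \in \CS_{u_n}$ with $d(u_n, u'_n) \le d(u_n, v)$. Extending $h_n$ flow-equivariantly to parallel sets and composing with $\sigma_n^{-1}$, we obtain $G_n \colon (\Par_v, v) \to (\Par_u, u_n^\dagger)$, where $u_n^\dagger := \sigma_n^{-1}(u'_n) \in \CS_u$ satisfies $d(u, u_n^\dagger) = d(u_n, u'_n) \le d(u_n, v) \to 0$. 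Passing to a further subsequence, $G_n$ converges to a flow-equivariant isometric embedding $g \colon (\Par_v, v) \to (\Par_u, u)$ with matched basepoints. Lemma~\ref{reciprocal embeddings} then forces both $f$ and $g$ to be surjective isometries. Repeating the whole construction with $v$ replaced by any $w \in \CS_v \subseteq GX_p$ produces a flow-equivariant surjective isometry $f_w \colon (\Par_u, u) \to (\Par_v, w)$, and setting $\phi_w := f_w \circ f^{-1}$ gives the desired isometry of $\Par_v$ sending $v$ to $w$.

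The principal difficulty is the basepoint mismatch in Lemma~\ref{recurrent embeddings}: that lemma only yields a basepoint $u'$ parallel to $u$ with $d(u, u') \le d(u, v)$, which need not vanish. The fix is to introduce the approximating sequence $u_n \to v$ inside $\mathcal{U}_p$ and to exploit the canonical identifications $\sigma_n$ from Corollary~\ref{canonical parallel set on horosphere} to transfer all maps back to the fixed pointed space $(\Par_u, u)$, forcing the defect $d(u, u_n^\dagger) \le d(u_n, v)$ to vanish in the Arzel\`a--Ascoli limit. A secondary point is that all intermediate maps must be flow-equivariant, which holds because Corollaries~\ref{approachable plus nonwandering} and~\ref{canonical parallel set on horosphere} produce flow-equivariant maps and the cross-section embedding of Lemma~\ref{recurrent embeddings} extends flow-equivariantly to parallel sets.
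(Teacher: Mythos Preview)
Your proof is correct and rests on the same key ingredients as the paper's: approximation by a sequence in $\mathcal{U}_p$, Lemma~\ref{recurrent embeddings} paired with Corollary~\ref{approachable plus nonwandering}, an Arzel\`a--Ascoli limit to erase the basepoint defect, and Lemma~\ref{reciprocal embeddings}. The paper's argument is more direct, however: for $v \in GX_p$ and $w \parallel v$ it takes $v_n \in \mathcal{U}_p$ with $v_n \to v$, composes $\varphi_n \colon (\CS_v, v) \to (\CS_{v_n}, v'_n)$ from Lemma~\ref{recurrent embeddings} with $\psi_n \colon (\CS_{v_n}, v_n) \to (\CS_w, w)$ from Corollary~\ref{approachable plus nonwandering}, and passes to a limit in the \emph{fixed} target $\CS_w$; symmetry in $v$ and $w$ supplies the reverse embedding and Lemma~\ref{reciprocal embeddings} finishes. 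Your auxiliary geodesic $u$, the transfer isometries $\sigma_n$ from Corollary~\ref{canonical parallel set on horosphere}, and the flow-equivariance bookkeeping are all avoidable---the fixed codomain $\CS_w$ already serves the role you assign to $\Par_u$.
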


\begin{proof}
Let $p \in b\mathcal{U}$.
By \lemref{reciprocal embeddings}, it suffices to construct an isometric embedding $(\CS_v, v) \into (\CS_w, w)$ for all $v \in GX_p$ and $w \parallel v$.
So let $v \in GX_p$ and $w \parallel v$.

By density of $\mathcal{U}_p$ in $GX_p$, there is a sequence $(v_n)$ in $\mathcal{U}_p$ such that $v_n \to v$.
By \lemref{recurrent embeddings} and \corref{approachable plus nonwandering}, for each $n$ we can find isometric embeddings $\varphi_n \colon (\CS_v, v) \into (\CS_{v_n}, v'_n)$, for some $v'_n \parallel v_n$ such that $d(v'_n, v) \le d(v_n, v)$, and $\psi_n \colon (\CS_{v_n}, v_n) \into (\CS_w, w)$.
Thus $\psi_n \circ \varphi_n \colon (\CS_v, v) \into (\CS_w, w'_n)$ is a sequence of isometric embeddings with $w'_n = \psi_n(v'_n) \to w$.
A subsequence of $\psi_n \circ \varphi_n$ converges to an isometric embedding $(\CS_v, v) \into (\CS_w, w)$, as desired.
\end{proof}

A variation on the preceding proof gives us the following corollary.

\begin{corollary}				\label{consistent cross sections}
Let $X$ be a proper, geodesically complete $\CAT(0)$ space that satisfies the duality condition.
Then $\CS_v$ is isometric to $\CS_w$ for all $v,w \in GX$ such that $v(\infty) = w(\infty) \in b\mathcal{U}$.
\end{corollary}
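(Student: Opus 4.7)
The plan is to adapt the proof of \lemref{isometric transitivity} almost verbatim, constructing pointed isometric embeddings in both directions between $\CS_v$ and $\CS_w$ and then invoking \lemref{reciprocal embeddings}. The key observation is that the hypothesis $w \parallel v$ in \lemref{isometric transitivity} was needed only so that $w$ would lie inside $\CS_v$ (to phrase the conclusion as transitivity of $\Isom(\CS_v)$); the construction of the embedding itself used only $v(\infty) = w(\infty)$, which is precisely what is given here.

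First I would fix $v, w \in GX$ with $v(\infty) = w(\infty) = p \in b\mathcal{U}$ and build a pointed isometric embedding $(\CS_v, v) \into (\CS_w, w)$. By \corref{regular dense slices}, $\mathcal{U}_p$ is dense in $GX_p$, so I pick a sequence $v_n \in \mathcal{U}_p$ with $v_n \to v$. Since each $v_n$ is recurrent with $v_n(\infty) = v(\infty)$, \lemref{recurrent embeddings} supplies a pointed isometric embedding $\varphi_n \colon (\CS_v, v) \into (\CS_{v_n}, v'_n)$ for some $v'_n \parallel v_n$ with $d(v'_n, v_n) \le d(v_n, v) \to 0$. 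Since each $v_n$ is completely approachable and $v_n(\infty) = w(\infty)$ is $\Isom(X)$-dual to $w(-\infty)$ (by applying the duality condition to the geodesic $w$), \lemref{Eberlein plus approachable} supplies a pointed isometric embedding $\psi_n \colon (\CS_{v_n}, v_n) \into (\CS_w, w)$. The composition $\psi_n \circ \varphi_n$ sends $v$ to $w'_n := \psi_n(v'_n)$, and $d(w'_n, w) = d(\psi_n(v'_n), \psi_n(v_n)) = d(v'_n, v_n) \to 0$. The maps $\psi_n \circ \varphi_n$ are uniformly $1$-Lipschitz, and the image of every fixed point in $\CS_v$ stays in a compact subset of $\CS_w$ (since $\CS_w$ is proper and the basepoint images $w'_n$ converge to $w$), so Arzel\`a-Ascoli yields a subsequential limit, which is the desired pointed isometric embedding $(\CS_v, v) \into (\CS_w, w)$.

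Running the same argument with the roles of $v$ and $w$ swapped produces a pointed isometric embedding $(\CS_w, w) \into (\CS_v, v)$. Because $X$ is proper, $\CS_v$ and $\CS_w$ are proper metric spaces, and so \lemref{reciprocal embeddings} forces both embeddings to be isometries. In particular $\CS_v$ is isometric to $\CS_w$, as claimed.

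The only mildly delicate step is the Arzel\`a-Ascoli compactness argument for the composition sequence, but this is routine since isometries are automatically equicontinuous and the pointwise orbit of the basepoint stays precompact in $\CS_w$. The actual content of the proof is already built into the argument for \lemref{isometric transitivity}; dropping the parallelism hypothesis costs essentially nothing, because the construction never exploited it in the first place, and invoking \lemref{reciprocal embeddings} symmetrically upgrades two pointed embeddings into a genuine isometry.
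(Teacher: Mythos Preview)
Your proof is correct. The paper takes a slightly different, more economical route: it fixes a reference geodesic $w_0 \in \mathcal{U}_p$ and shows $\CS_v \cong \CS_{w_0}$ for every $v \in GX_p$ (whence any two such cross sections are mutually isometric by transitivity). For that, a single application of \lemref{recurrent embeddings} gives $(\CS_v, v) \into (\CS_{w_0}, w')$; the already-proved \lemref{isometric transitivity} then translates $w'$ to $w_0$; and \corref{approachable plus nonwandering} yields the reverse embedding $(\CS_{w_0}, w_0) \into (\CS_v, v)$ directly, since $w_0$ is completely approachable. No new limiting argument is needed --- the Arzel\`a--Ascoli step was absorbed into \lemref{isometric transitivity}. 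Your approach instead reruns that limiting argument twice (once in each direction), which is slightly less economical but has the virtue of being self-contained: you never invoke \lemref{isometric transitivity}, and in fact your argument subsumes it, since \lemref{isometric transitivity} is exactly the special case $w \parallel v$ of the embedding you construct. Your observation that the parallelism hypothesis in \lemref{isometric transitivity} was never used in the construction is spot on.
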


\begin{proof}
Fix $p \in b\mathcal{U}$ and $w \in \mathcal{U}_p$.
Let $v \in GX_p$.
\lemref{recurrent embeddings} gives us an isometric embedding $\varphi \colon (\Par_v, v) \into (\Par_w, w')$ for some $w' \parallel w$.
By \lemref{isometric transitivity}, we may assume $w' = w$.
And \corref{approachable plus nonwandering} gives us an isometric embedding $\psi \colon (\Par_w, w) \into (\Par_v, v)$.
By \lemref{reciprocal embeddings}, $\varphi$ and $\psi$ are isometries.
The corollary follows.
\end{proof}

\section{Intrinsic Rank}

Throughout this section, $X$ is a proper, geodesically complete $\CAT(0)$ space.

\subsection{Parallel sets}

We use the same notation for a geodesic $v\in GX$ and its image $v(\R)$.
Recall that for $v \in GX$, the parallel set $\Par_v \subset GX$ is the set of geodesics parallel to $v$,
and $P_v = \bigcup_{v \in \Par_v} v \subset X$ is isometric to $\Par_v$ under footpoint projection.
If we want to specify the space $X$, we will write $P^X_v$ instead of $P_v$.

Recall that $P_v$ splits isometrically as $P_v = C_v \times v$, where $C_v$ is the cross section of $v$.
Specifically, the isometry $P_v \to C_v \times v$ is given by
$x\mapsto (\pi\0{C_v}(x), \pi_{v}(x))$, where each coordinate is convex projection.
Call these the \defn{$v$-coordinates} of $x \in P_v$.
Sometimes we will write $x=(y,v(a))$ and identify $P_v$ with $C_v \times v$.

We collect some facts about parallel sets.

\begin{enumerate}
\item
Let $w$ be a geodesic in the CAT(0) space $P_v$, with $w$ not parallel to $v$.
There exist a geodesic $u$ in $C_v$ and an angle $\theta\in (0,\pi)$, such that $w(t)=(u(t\sin\theta)\,  ,\,
v(a+t\cos\theta))$ for all $t \in \R$.
Here we have $v(a) = \pi_v(w(0))$.
Note that
$u(t)=\pi\0{C_v}(w(t\csc \theta))$.
Call $u$ the \defn{normalized projection} of $w$ in $C_v$.
Of course $w$ is contained in the 2-flat $u\times v \subseteq P_v$.
(Note: $u \times v$ may not contain $v$, despite the notation.
But it does if $w(0) = v(0)$.)

\item
Let $w$ be a geodesic in $P_v$ which is not parallel to $v$.
Let $u$ be the normalized projection of $w$.
Assume $v(0)=w(0)$.
Since $w$ is contained in the 2-flat $u\times v \subseteq P_v$,
we see that $v$ is contained in $P_w$.
By the same argument, every $v' \parallel v$ such that $v'(0) = w'(0)$ for some $w' \parallel w$ is contained in $P_w = P_{w'}$.

\item
Let $w_1$ and $w_2$ be two geodesics in $P_v$ which are not parallel to $v$.
Let $u_i$ be the normalized projection
of $w_i$.
If $w_1 \parallel w_2$ then $u_1 \parallel u_2$.
This is because the projection $\pi\0{C_v}$ does not increase distances.
\end{enumerate}

\begin{lemma}					\label{pedro 1.2}
Let $w$ be a geodesic in $P_v$ that is not parallel to $v$, such that $v(0)=w(0)$.
Let $u$ be the normalized projection of $w$.
Then, using the identification
$P_v\ra C_v\times v$, we can write
\[P_v\,\cap\,P_w\,=\, P^{C_v}_u\,\times v.\]
\end{lemma}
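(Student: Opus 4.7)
The plan is to prove the two inclusions separately, working throughout in the product $v$-coordinates $P_v = C_v \times v$ established before the lemma. Since $w(0) = v(0)$, the normalized projection $u$ passes through $v(0)$, and $w(t) = (u(t\sin\theta), v(t\cos\theta))$.

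For the inclusion $\supseteq$, I would take $(y, v(a)) \in P^{C_v}_u \times v$, so that $y$ lies on some geodesic $u' \parallel u$ in $C_v$. Parametrize $u'$ via the Flat Strip Theorem so that $s \mapsto (u(s), u'(s))$ realizes the width of the flat strip bounded by $u$ and $u'$, and write $y = u'(s_0)$. Define
\[w'(t) := (u'(s_0 + t\sin\theta),\, v(a + t\cos\theta)).\]
This is a unit-speed geodesic in the Euclidean product $P_v$ with $w'(0) = (y, v(a))$, and a Pythagorean calculation inside the flat strip $u \cup u'$ gives $d(w(t), w'(t))^2 = (s_0^2 + D^2) + a^2$, where $D$ is the width of the strip. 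Since this is constant, $w' \parallel w$, and hence $(y, v(a)) \in P_v \cap P_w$.

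For the inclusion $\subseteq$, let $x = (y, v(a)) \in P_v \cap P_w$ and choose $w' \parallel w$ with $w'(0) = x$. The key step is to show $w' \subseteq P_v$. Since $w \subseteq P_v$ and $P_v$ is closed and convex, both endpoints $w(\pm\infty)$ lie in $\bd P_v$, and by parallelism $w'(\pm\infty) = w(\pm\infty)$. Now any $\xi \in \bd P_v$ is a limit of points $z_n \in P_v$, so the unique geodesic ray from $x \in P_v$ to $\xi$ is the uniform-on-compacts limit of the segments $[x, z_n] \subseteq P_v$, and hence lies in $P_v$; applying this to $\xi = w(+\infty)$ and to $\xi = w(-\infty)$ exhibits both half-rays of $w'$ inside $P_v$, so $w' \subseteq P_v$. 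Once $w'$ sits inside the Euclidean product $P_v = C_v \times v$, its parallelism with $w$ forces $w'(t) = (u'(s_0 + t\sin\theta),\, v(c + t\cos\theta))$ for some $u' \parallel u$ in $C_v$ and some constants $s_0, c \in \R$ (via the same Pythagorean identity as before). Evaluating at $t = 0$ yields $y = u'(s_0) \in P^{C_v}_u$, so $x \in P^{C_v}_u \times v$.

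The main obstacle is the step that $w'$ lies entirely in $P_v$: a priori the parallel geodesic through $x$ could dip in and out of $P_v$, and it is only because its endpoints at infinity are inherited from $w \subseteq P_v$ that one can trap it inside using the closed convex structure. Everything else is bookkeeping in the Euclidean product $P_v = C_v \times v$, using that parallel geodesics in $P_v$ project under $\pi_{C_v}$ to parallel geodesics in $C_v$.
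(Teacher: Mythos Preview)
Your proof is correct and follows the same overall skeleton as the paper's: prove each inclusion directly, and for $\subseteq$ first force the parallel geodesic $w'$ through $x$ to lie inside $P_v$, then read off that its $C_v$-projection is parallel to $u$.

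The one genuine difference is in the step you flag as the main obstacle, namely $w'\subseteq P_v$. You argue via the visual boundary: $w'(\pm\infty)=w(\pm\infty)\in\partial P_v$, and since $P_v$ is closed convex the two rays from $x$ toward these boundary points stay in $P_v$. The paper instead observes that $t\mapsto d(w'(t),P_v)$ is convex (distance to a closed convex set) and bounded (because $w'\parallel w\subset P_v$), hence constant; since it vanishes at $t=0$ the whole line sits in $P_v$. Both are one-line CAT(0) facts; the paper's version has the mild advantage of not touching $\partial X$ at all, while yours makes the geometry of asymptotic rays explicit. For the remaining bookkeeping, the paper simply quotes facts (ii) and (iii) stated just before the lemma (parallel 2-flats $u'\times v$ and $u\times v$ for $\supseteq$, and that parallel $w$'s have parallel normalized projections for $\subseteq$), whereas you redo these computations in coordinates via the flat strip; the content is the same.
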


\begin{remark}
Notice that the right-hand side of the equation above does not
depend directly on $w$, only on the normalized projection $u$ of $w$.
\end{remark}

\begin{proof}
The set $P_v \cap P_w$ is a convex subset of $P_v$ that contains $v$, along with every $v' \parallel v$ such that $v'(0) \in P_w$ (see (ii) above).
Therefore we can write $P_v \cap P_w = E \times v$ for some convex subset $E$ of $C_v$.
Since $w \sbs E \times v$ and $u$ is the normalized projection of $w$,
we see that $u$ is a geodesic in $E$.

We now prove $E \times v \sbs P^{C_v}_u \times v$.
Let $x \in E \times v = P_v \cap P_w$.
Then $x \in w'$, for some $w' \parallel w$.
Since $x\in P_v$ and the distance from $w'$ to $P_v$ is bounded (because $w' \parallel w$ and $w\sbs P_v$), we see that $w'\sbs P_v$.
By (iii) above, $u'$ is parallel to $u$, where $u'$ is the normalized projection of $w'$.
Therefore $x \in u' \times v$, with $u' \parallel u$.
This proves $E \times v \sbs P^{C_v}_u \times v$.
The other inclusion follows from the definitions and the fact that if $u'$ is parallel to $u$ then the 2-flats $u' \times v$ and $u \times v$ are parallel.
\end{proof}

\subsection{The Decomposition Lemma}

We need a lemma about convex sets.

\begin{lemma}					\label{pedro 2.1}
Let $F$ be a closed convex set in the $\CAT(0)$ space $X$, and let $v,w$ be parallel geodesics in $X$ such that $w$ is contained in $F$.
Then $t \mapsto \dX(v(t),F)$ is constant, and there is a geodesic $w'$ in $F$ such that $w' \parallel v$ and $\dX(v,F) = \dX(v,w')$.
\end{lemma}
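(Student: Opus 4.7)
The plan is to prove the two assertions in sequence, using convexity of the distance to a closed convex set in a $\CAT(0)$ space, the nearest-point projection $\pi_F$, and the Bruhat--Tits (CN) inequality.

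First I would show that $g(t) := \dX(v(t), F)$ is constant. Since $F$ is closed convex, the map $\phi(x) := \dX(x, F)$ is convex on $X$, so the composition $g = \phi \circ v$ is convex on $\R$. Because $w \subset F$ and $v \parallel w$, we have $g(t) \le \dX(v(t), w(t)) = \dX(v(0), w(0))$ for every $t$, so $g$ is bounded above. A bounded convex function on $\R$ is constant; call this value $c'$. If $c' = 0$, then $v(\R) \subset F$ (as $F$ is closed) and $w' := v$ does the job.

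Assuming $c' > 0$, define $w'(t) := \pi_F(v(t))$, the unique nearest-point projection, so $\dX(v(t), w'(t)) = c'$ for all $t$. To show $w'$ parametrizes a geodesic, fix $s < t$ and let $\gamma \colon [0,1] \to F$ be the affinely parametrized geodesic in $F$ from $w'(s)$ to $w'(t)$. The reparametrized $v$-segment and $\gamma$ are two geodesics in $X$, so convexity of the distance function between geodesics in a $\CAT(0)$ space gives
\[\dX(v((1-\lambda)s + \lambda t), \gamma(\lambda)) \le (1-\lambda) c' + \lambda c' = c'.\]
Since $\gamma(\lambda) \in F$, this distance is also $\ge c'$, so equality holds; uniqueness of projection forces $\gamma(\lambda) = w'((1-\lambda)s + \lambda t)$. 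Consequently $w'$ has constant speed $c_f \in (0, 1]$ (upper bound since $\pi_F$ is $1$-Lipschitz; positivity because a geodesic line cannot remain a bounded distance from a single point).

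The main obstacle is to show $c_f = 1$, so that $w'$ is a unit-speed geodesic parallel to $v$. I would handle this by combining the projection inequality with the CN inequality. The projection inequality applied at $v(t/2)$ with $w'(t) \in F$ gives
\[\dX(v(t/2), w'(t))^2 \ge c'^2 + \dX(w'(t/2), w'(t))^2 = c'^2 + \tfrac{1}{4} c_f^2 t^2,\]
while the CN inequality applied to the triangle $v(0), v(t), w'(t)$ at the midpoint $v(t/2)$ of $v(0),v(t)$ yields
\[\dX(v(t/2), w'(t))^2 \le \tfrac{1}{2} \dX(v(0), w'(t))^2 + \tfrac{1}{2} c'^2 - \tfrac{1}{4} t^2.\]
Combining the two with the triangle bound $\dX(v(0), w'(t)) \le c' + c_f t$ and simplifying produces $(1 - c_f^2)\, t \le 4 c' c_f$ for every $t > 0$, which forces $c_f \ge 1$ and hence $c_f = 1$. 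Then $w'$ is a unit-speed geodesic in $F$ with $\dX(v(t), w'(t)) = c'$ constant, so $w' \parallel v$ and $\dX(v, F) = \dX(v, w')$, completing the proof.
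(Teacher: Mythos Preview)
Your proof is correct and follows the same approach as the paper: show $t \mapsto \dX(v(t),F)$ is a bounded convex function on $\R$, hence constant, and then set $w'(t) = \pi_F(v(t))$. The paper's proof is a two-line sketch that simply asserts ``this geodesic satisfies the desired conditions''; you have supplied the missing verification that $w'$ is in fact a unit-speed geodesic, via the projection inequality and the CN inequality.
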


\begin{proof}
The distance to $F$ is constant because it is a convex and bounded function on $\R$.
Define $w'$ by $w'(t) = \pi_F (v(t))$, where $\pi_F$ is the convex projection $X \to F$.
This geodesic satisfies the desired conditions.
\end{proof}

\begin{lemma} [Decomposition Lemma]		\label{decomposition lemma}
Let $v \in GX$.
Assume that $C_v$ contains a geodesic $u$ with $u(0) = v(0)$.
Further assume there is a sequence $v_n \to v$ in $u \times v \subseteq P_v$  with $v_n \neq v$ such that for every $x \in C_v$ there is a sequence $x_n \to x$ in $X$ where each $x_n \in C_{v_n}$.
Then $C_v = P^{C_v}_u$.
Hence we can write $C_v = E \times u$ for some proper CAT(0) space $E$.
\end{lemma}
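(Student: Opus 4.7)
My plan is to prove the nontrivial inclusion $C_v \subseteq P_u^{C_v}$. Given $x \in C_v$, I will produce a geodesic in $C_v$ through $x$ parallel to $u$. The strategy: lift the approximating points $x_n$ to geodesics $v_n^x$ parallel to $v_n$, project these down into $P_v$ via \lemref{pedro 2.1} to get geodesics $w_n \subseteq P_v$ still parallel to $v_n$, and then take the normalized projections $u_n^* \subseteq C_v$. By Fact (iii) in the list preceding \lemref{pedro 1.2}, the $u_n^*$ will be parallel to $u$, and a subsequential Arzelà-Ascoli limit will yield a geodesic in $C_v$ through $x$ parallel to $u$.

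First I would pass to a subsequence and assume each $v_n$ is not parallel to $v$. (If all but finitely many $v_n$ were parallel to $v$, then $C_{v_n}$ would just be a translate of $C_v$ inside $P_v$, and the approachability hypothesis would become trivial and useless.) Fix $x \in C_v$ and pick $x_n \to x$ with $x_n \in C_{v_n}$. Since $x_n \in P_{v_n}$, there exists $v_n^x \in \Par_{v_n}$ with $v_n^x(0) = x_n$. Applying \lemref{pedro 2.1} with $F = P_v$ to the parallel pair $v_n^x \parallel v_n$ (noting $v_n \subseteq P_v$) produces a geodesic $w_n \subseteq P_v$ with $w_n(t) = \pi_{P_v}(v_n^x(t))$ and $w_n \parallel v_n^x$, hence $w_n \parallel v_n$. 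In particular $w_n(0) = \pi_{P_v}(x_n) =: y_n$, and since $x \in P_v$, continuity of convex projection gives $y_n \to x$.

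Now $w_n \subseteq P_v$ is parallel to $v_n$ and (because $v_n$ is not parallel to $v$) not parallel to $v$, so its normalized projection $u_n^* \subseteq C_v$ is defined. Since $v_n \subseteq u \times v$, the normalized projection of $v_n$ itself into $C_v$ is a reparametrized translate of $u$; Fact (iii) then yields $u_n^* \parallel u$ in $C_v$. Moreover $u_n^*(0) = \pi_{C_v}(w_n(0)) = \pi_{C_v}(y_n) \to \pi_{C_v}(x) = x$, using $x \in C_v$. Because $C_v$ is a proper $\CAT(0)$ space, Arzelà-Ascoli produces a subsequential limit $u^x$ that is a geodesic in $C_v$ with $u^x(0) = x$; parallelism being a closed condition, $u^x \parallel u$. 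Hence $x \in P_u^{C_v}$, giving $C_v = P_u^{C_v}$. The splitting $C_v = E \times u$ then follows from the standard isometric decomposition of the parallel set of $u$ inside the proper $\CAT(0)$ space $C_v$.

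The main obstacle I expect is the initial reduction, which requires interpreting the hypothesis so that the $v_n$ are genuinely not parallel to $v$; without this the approachability statement carries no information. After that, the argument hinges on the product structure $P_v = C_v \times \R$ (to ensure each $u_n^*$ really is a unit-speed geodesic in $C_v$) and on Fact (iii) (for the parallelism with $u$); the remaining steps—continuity of convex projection, closedness of parallelism, and Arzelà-Ascoli—are routine.
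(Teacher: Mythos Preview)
Your proof is correct and follows essentially the same route as the paper: lift $x_n$ to a geodesic parallel to $v_n$, project into $P_v$ via \lemref{pedro 2.1}, and use that the normalized projection is parallel to $u$. The only cosmetic difference is that the paper packages the last step through \lemref{pedro 1.2}, obtaining $P_v \cap P_{v_n} = P^{C_v}_u \times v$ outright and then using closedness of $P^{C_v}_u$ rather than an Arzel\`a--Ascoli limit of the $u_n^*$.
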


\begin{proof}
By reversing the orientation of $u$ if necessary, and passing to a subsequence of $v_n$, we may assume each $v_n$ is the geodesic $t \mapsto (u(t\sin\theta_n)\, ,\, v(t\cos\theta_n))$ in $u \times v$ for some $\theta_n \in (0, \pi/2)$, and $\theta_n \to 0$.
Note that the normalized projection of $v_n$ is always $u$.
Thus by \lemref{pedro 1.2}, we have that
\begin{equation}\label{pedro eq1}
P_v \, \cap \, P_{v_n} \, = \, P^{C_v}_u \times v.
\end{equation}
Observe that the right-hand side does not depend on $n$.

Let $x \in C_v$.
We will prove that $x \in P^{C_v}_u$.
By hypothesis, there exist $x_n \in C_{v_n}$ such that $x_n \to x$.
Let $w_n$ be the geodesic parallel to $v_n$ with $w_n(0) = x_n$.
Since $v_n \subset P_v$ and $P_v$ is convex, by \lemref{pedro 2.1} we can project $w_n$ onto $P_v$ to obtain a geodesic $w'_n$ in $P_v$ which is parallel
to $w_n$ and such that
\begin{equation*}
\dX(w_n,w'_n) \, = \, \dX(w_n,P_v) \, \leq \, \dX(x_n,x).
\end{equation*}
Thus
$\dX(x,P_v \cap P_{v_n})
\leq \dX(x,w_n) + \dX(w_n,w'_n)
\leq 2 \dX(x,x_n) \to 0$.
This, together with \eqref{pedro eq1} and the fact that $x \in C_v$, implies that $\dX(x, P^{C_v}_u) = 0$.
Since $P^{C_v}_u$ is closed, we see that $C_v \subseteq P^{C_v}_u$.
The reverse inclusion is obvious.
\end{proof}

\begin{corollary}				\label{old decomposition lemma}
Let $v \in \mathcal{A}$.
Assume $C_v$ contains a geodesic $u$ with $u(0) = v(0)$.
Then $C_v = P^{C_v}_u$.
Hence we can write $C_v = E \times u$ for some proper CAT(0) space $E$.
\end{corollary}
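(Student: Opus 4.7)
The plan is to derive this corollary as an essentially immediate consequence of the Decomposition Lemma (\lemref{decomposition lemma}); the work is just verifying that the two hypotheses of that lemma are satisfied whenever $v \in \mathcal{A}$.

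First, I would construct the sequence $v_n \to v$ in $u \times v$ required by \lemref{decomposition lemma}. Since $u$ is a geodesic in $C_v$ with $u(0) = v(0)$, the product $u \times v \subseteq P_v$ is an isometrically embedded Euclidean plane $\R^2$ passing through $v(0)$. In this 2-flat, I can simply take $v_n$ to be the geodesic through $v(0)$ making angle $\theta_n \in (0, \pi/2)$ with $v$ using the direction of $u$; explicitly, $v_n(t) = (u(t\sin\theta_n), v(t\cos\theta_n))$ with $\theta_n \to 0$. Then $v_n \to v$ uniformly on compact sets, each $v_n$ is distinct from $v$, and each $v_n$ lies in $u \times v \subseteq P_v$.

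Next, the hypothesis ``for every $x \in C_v$ there is a sequence $x_n \to x$ in $X$ where each $x_n \in C_{v_n}$'' is \emph{precisely} the defining property of complete approachability applied to the sequence $v_n \to v$ constructed above. Since $v \in \mathcal{A}$, this condition holds for the sequence $v_n$. (Equivalently, one could invoke \lemref{ca equivalences}\itemref{it: seq in X}.)

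With both hypotheses verified, \lemref{decomposition lemma} applies directly and yields $C_v = P^{C_v}_u$, together with the isometric splitting $C_v = E \times u$ for some proper $\CAT(0)$ space $E$. I do not foresee a genuine obstacle here; the substantive geometric content has been absorbed into the Decomposition Lemma, and the role of this corollary is simply to repackage that content under the convenient hypothesis $v \in \mathcal{A}$, which supplies the approximation data for free.
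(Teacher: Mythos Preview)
Your proposal is correct and matches the paper's intended argument: the corollary is stated without proof precisely because it follows immediately from \lemref{decomposition lemma} once one observes that complete approachability supplies the required approximating sequence in the $2$-flat $u \times v$. Your explicit choice of $v_n$ is the same one the proof of \lemref{decomposition lemma} reduces to after passing to a subsequence.
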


\subsection{Approachable cross sections are Euclidean}

\begin{theorem}					\label{pedro 4.1}
Let $X$ be a proper, geodesically complete $\CAT(0)$ space that satisfies the duality condition.
There is a nonnegative integer $k$ such that the cross section $C_v$ of every $v \in \mathcal{A}$ is a $k$-flat.
\end{theorem}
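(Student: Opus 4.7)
My plan is to fix $v \in \mathcal{A}$ and iteratively apply Corollary \ref{old decomposition lemma} to split $C_v$ as a product of $\R$-factors, use properness of $X$ to force termination of the iteration, and then combine complete approachability with Corollary \ref{consistent cross sections} to show the resulting dimension is independent of $v \in \mathcal{A}$.

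First I would check that every cross section $C_v$ is geodesically complete: any geodesic segment in $C_v$ extends in $X$ by geodesic completeness, and the product structure $P_v = C_v \times \R$ (together with convex projection onto $P_v$) forces the extension to remain in $C_v$. Now fix $v \in \mathcal{A}$. If $C_v = \set{v(0)}$, set $k(v) := 0$. Otherwise, geodesic completeness yields a geodesic $u_1 \subseteq C_v$ with $u_1(0) = v(0)$, and Corollary \ref{old decomposition lemma} gives $C_v = E_1 \times u_1$ for some proper CAT(0) space $E_1$, itself geodesically complete by the same argument applied to the $E_1$-factor. Inductively, suppose $C_v = E_n \times \R^n$ with $E_n$ geodesically complete. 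If $E_n$ is a single point, stop with $k(v) = n$; otherwise pick a geodesic $u_{n+1} \subseteq E_n$ through $v(0)$. Since $u_{n+1}$ is also a geodesic of $C_v$ through $v(0)$, reapplying Corollary \ref{old decomposition lemma} to $v$ with $u_{n+1}$ gives $C_v = P^{C_v}_{u_{n+1}}$. Computed inside the product $C_v = E_n \times \R^n$, a geodesic parallel to $u_{n+1}$ must be constant in the $\R^n$-factor (otherwise the squared distance to $u_{n+1}$ grows quadratically) and parallel to $u_{n+1}$ in $E_n$, so $P^{C_v}_{u_{n+1}} = P^{E_n}_{u_{n+1}} \times \R^n$. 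Equating this with $C_v$ forces $P^{E_n}_{u_{n+1}} = E_n$, hence $E_n = E_{n+1} \times u_{n+1}$ and $C_v = E_{n+1} \times \R^{n+1}$. The iteration terminates at some finite $k = k(v)$: $C_v$ is proper as a closed subset of the proper space $X$, so its closed unit ball at $v(0)$ is compact, but $C_v \supseteq E_n \times \R^n$ contains the $n$ standard basis vectors of $\R^n$ at pairwise distance $\sqrt{2}$, and total boundedness of the compact ball bounds $n$. When the iteration halts, $E_{k(v)}$ is a geodesically complete CAT(0) space with no geodesic through $v(0)$, hence equals $\set{v(0)}$, and $C_v \cong \R^{k(v)}$.

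For constancy of $k(v)$ on $\mathcal{A}$, I would observe that by \lemref{ca equivalences}\itemref{it: pt of cont} the map $v \mapsto \CS_v$ is Hausdorff-continuous at every $v \in \mathcal{A}$, so the same holds for $v \mapsto C_v$ via footpoint projection; since Hausdorff convergence of Euclidean flats (with base points) preserves dimension, $v \mapsto k(v)$ is locally constant on $\mathcal{A}$. To upgrade to global constancy, I would use \corref{consistent cross sections}: for each $p \in b\mathcal{U}$, all $v \in GX_p$ have pairwise isometric $C_v$ and hence a common value of $k(v)$; combined with the density of $b\mathcal{U}$ in $\bd X$, the openness of the forward-endpoint map (\lemref{forward-endpoint map is open}), the density of $\mathcal{A} \cap GX_p$ in $GX_p$, and local constancy on $\mathcal{A}$, this common value is independent of $p \in b\mathcal{U}$ and agrees with $k(v)$ for every $v \in \mathcal{A}$.

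The hard part will be the inductive splitting, specifically the identification $P^{C_v}_{u_{n+1}} = P^{E_n}_{u_{n+1}} \times \R^n$ inside the product structure, which is what lets a single application of Corollary \ref{old decomposition lemma} chip one more $\R$-factor off of $C_v$ at each stage. Once that computation is carefully set up, termination is immediate from properness of $C_v$, and the constancy step reduces to combining the continuity of cross sections on $\mathcal{A}$ with the consistent-cross-section corollary along forward-endpoint fibers over $b\mathcal{U}$.
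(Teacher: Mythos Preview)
Your iterative splitting in Step~1 rests on the claim that every $C_v$ (and hence every $E_n$) is geodesically complete, justified by ``extend in $X$ and then convex-project onto $P_v$.''  That argument is broken: nearest-point projection onto a convex set is $1$-Lipschitz but does \emph{not} send geodesics to geodesics, so the projection of the extended segment need not be a geodesic in $P_v$ (or $C_v$).  In fact the statement you want is false in the generality you assert it: glue a Euclidean half-plane to a hyperbolic half-plane along their common boundary geodesic $v$.  This is a proper, geodesically complete $\CAT(0)$ space, but $P_v$ is just the Euclidean half-plane and $C_v\cong[0,\infty)$, which is not geodesically complete.  (This space does not satisfy duality, so it does not contradict the theorem, but it shows your \emph{reason} for geodesic completeness of $C_v$ cannot be right.)  Without a source of geodesic lines in $E_n$, you cannot run the induction; and ``$E_n\neq\{\mathrm{pt}\}$'' alone does not produce one.

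This is exactly the difficulty the paper's proof is designed to overcome.  The paper first restricts to $v$ with $v(\infty)\in b\mathcal{U}$ and invokes \lemref{isometric transitivity} to get that $\Isom(C_v)$ acts transitively on $C_v$.  Then, in the canonical splitting $C_v=Y\times H$ with $H$ Hilbert and $Y$ free of Clifford translations, $\Isom(Y)$ is still transitive; if $Y$ is not a point it is unbounded and cocompact, so it contains a geodesic line through $v(0)$, and a single application of \corref{old decomposition lemma} then manufactures a Clifford translation of $Y$, a contradiction.  In other words, the paper replaces your missing ``geodesic completeness of $E_n$'' by a transitivity argument that guarantees the existence of a line.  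Your constancy step is also thinner than the paper's: ``locally constant on $\mathcal{A}$'' does not by itself give global constancy since $\mathcal{A}$ is only a dense $G_\delta$, and your sketch for gluing together the fiberwise values over $b\mathcal{U}$ does not explain how to land in $\mathcal{A}$ when you move to a nearby fiber.  The paper instead compares two recurrent geodesics directly, using \lemref{recurrent suspensions} and \lemref{Eberlein plus approachable} to embed one flat parallel set into the other.
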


\begin{proof}
We prove the theorem in three steps.
We start by proving the theorem for $v \in \mathcal{A}$ such that $v(\infty) \in b\mathcal{U}$, but allow $k$ to depend on $v$.
We then remove dependence of $k$ on $v$, for $v \in \mathcal{U}$ with $v(\infty) \in b\mathcal{U}$.
We finally extend to all $v \in \mathcal{A}$.

\step
We first prove $C_v$ is flat for all $v \in \mathcal{A}$ such that $v(\infty) \in b\mathcal{U}$.
Let $p \in b\mathcal{U}$ and $v \in \mathcal{U}_p$.
By \cite[Theorem 6.15(6)]{bridson}, $C_v$ admits a canonical product splitting $C_v = Y \times H$, where $H$ is a Hilbert space and $Y$ does not admit nontrivial Clifford translations; furthermore, every isometry of $C_v$ preserves the product splitting.
By \thmref{isometric transitivity}, we see that $\Isom(Y)$ acts transitively on $Y$.
Thus $Y$ is either a single point or is unbounded; we claim the former case holds.

For suppose $Y$ is unbounded.
Since $\Isom(Y)$ is transitive, $Y$ is cocompact.
It follows from \cite{go} that every point $q \in \bd Y$ can be joined to some $q' \in \bd Y$ by a geodesic in $Y$.
In particular, $Y$ contains a geodesic.
By transitivity, there is a geodesic $u$ in $Y$ such that $u(0) = v(0)$ (we may, of course, assume $v(0) \in Y)$.
By \lemref{old decomposition lemma}, we therefore have nontrivial Clifford translations on $Y$, a contradiction.

Thus $C_v = H$.
Notice that $H$ must be finite dimensional because $X$ is proper.
Thus every $v \in \mathcal{A}$ with $v(\infty) \in b\mathcal{U}$ is isometric to some Euclidean space $\R^k$.

\step
The dimension of $C_v$ does not depend on $v \in \mathcal{U}$:
Let $p,q \in b\mathcal{U}$, $v \in \mathcal{U}_p$, and $w \in \mathcal{U}_q$.
Let $k = \dim(C_v)$ and $m = \dim(C_w)$.
Since $w$ is recurrent, there exist $\gamma_n \in \Isom(X)$ and $t_n \to +\infty$ such that $\gamma_n g^{t_n} (w) \to w$.
By \lemref{recurrent suspensions},
we see that $(\gamma_{n} v(-\infty))$ accumulates on some $q'$ in $\bd \PPar_w$.
Now $q'$ lies in the ideal boundary of the $m$-flat $\PPar_w$, so there is some geodesic $u$ in $\PPar_w$ such that $u(-\infty) = q'$.
Since $v(-\infty)$ and $v(\infty)$ are $\Isom(X)$-dual by the duality condition hypothesis, and those points of $\bd X$ which are $\Isom(X)$-dual to $v(\infty)$ form a closed $\Isom(X)$-invariant set in $\bd X$ \cite[Lemma~ 1.2]{bb}, we see that $v(\infty)$ is $\Isom(X)$-dual to $u(-\infty)$.
Thus there is an isometric embedding $(\Par_v, v) \into (\Par_u, u)$ by \lemref{Eberlein plus approachable}.
In particular, $w$ lies in a $k$-flat, and therefore $m \ge k$.
A symmetric argument shows $k \ge m$. 
\step
We complete the proof.
Let $v \in \mathcal{A}$ and write $p = v(\infty)$.
Since $b\mathcal{U}$ is dense in $\bd X$, there is a sequence of $p_n \in b\mathcal{U}$ such that $p_n \to p$ in $\bd X$.
By \lemref{forward-endpoint map is open}, we may find a sequence $(v_n)$ in $GX$ such that $v_n(\infty) = p_n$ and $v_n \to v$.
By \corref{consistent cross sections} and the previous two steps, each $\CS_{v_n}$ is a $k$-flat, for some fixed $k$.
Since $v$ is completely approachable, $\CS_v$ is a $k$-flat.
\end{proof}

Write $\rank(X)$ for $\dim(\Par_v) = 1 + \dim(\CS_v)$ of some (any) $v \in \mathcal{A}$.
Thus the parallel set $\Par_v$ of every $v \in \mathcal{A}$ is a flat of dimension $\rank(X)$.
In particular, the parallel set of every $w \in GX$ contains a flat of dimension $\rank(X)$ by density of the completely approachable geodesics.
Thus we have proved the Main Theorem.

We close this section with two observations about $\mathcal{A}$ which are only now clear.

\begin{corollary}
$v \in \mathcal{A}$ if and only if $\Par_v$ is a $k$-flat, where $k = \rank(X)$.
\end{corollary}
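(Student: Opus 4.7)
My plan is as follows. The forward direction---$v \in \mathcal{A}$ implies $\Par_v$ is a $k$-flat---is immediate from \thmref{pedro 4.1} and the definition $k = \rank(X) = 1 + \dim(\CS_v)$. The substance lies in the converse: assume $\Par_v$ is a $k$-flat, so $\CS_v$ is a $(k-1)$-flat containing $v$. I would then verify $v \in \mathcal{A}$ by checking condition \itemref{it: seq in GX} of \lemref{ca equivalences}: for an arbitrary sequence $v_n \to v$ in $GX$ and arbitrary $w \in \CS_v$, produce $w_n \in \CS_{v_n}$ with $w_n \to w$.

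A preliminary step is to secure, for each $n$, a $(k-1)$-flat $F_n \subseteq \CS_{v_n}$ through $v_n$. By density of $\mathcal{A}$ (\lemref{completely approachables}), pick $u_m \in \mathcal{A}$ with $u_m \to v_n$. Each $\Par_{u_m}$ is a $k$-flat by \thmref{pedro 4.1}, and I would parametrize it by a flow-equivariant isometric embedding $\psi_m \colon \R^{k-1} \times \R \to GX$ sending $(0, 0) \mapsto u_m$, where the second factor corresponds to the geodesic flow. Arzel\`a-Ascoli extracts a subsequential uniform-on-compacta limit $\psi$ sharing these properties, with $\psi(0,0) = v_n$. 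Its image lies in $\Par_{v_n}$ since parallelism is closed under limits, and by flow-equivariance it contains the entire flow orbit of $v_n$. Transferring via footpoint projection $\Par_{v_n} \cong P_{v_n} \cong C_{v_n} \times \R$, the image becomes a $k$-flat in $P_{v_n}$ containing the geodesic line $v_n \subseteq X$; the standard CAT(0) splitting along $v_n$ writes it as $F \times \R$ with $F \subseteq C_{v_n}$ a $(k-1)$-flat through $v_n(0)$, and this $F$ corresponds to the desired $F_n \subseteq \CS_{v_n}$.

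Now parametrize each $F_n$ by an isometric embedding $\iota_n \colon \R^{k-1} \to GX$ with $\iota_n(0) = v_n$. Since $v_n \to v$, the family $(\iota_n)$ is equicontinuous and pointwise bounded, so Arzel\`a-Ascoli produces a subsequence $\iota_{n_j}$ converging uniformly on compacta to an isometric embedding $\iota \colon \R^{k-1} \to GX$ with $\iota(0) = v$. Upper semicontinuity of $\CS$ (condition \itemref{it: pt of cont} of \lemref{ca equivalences}) places the image of $\iota$ inside $\CS_v$, and since it is a $(k-1)$-flat through $v$ sitting inside the $(k-1)$-flat $\CS_v$ through $v$, the two must coincide. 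Setting $t := \iota^{-1}(w)$ then gives $w_{n_j} := \iota_{n_j}(t) \in \CS_{v_{n_j}}$ with $w_{n_j} \to w$ along the subsequence; defining $w_n$ globally to be any nearest point in $\CS_{v_n}$ to $w$ (which exists because $GX$ is proper and $\CS_{v_n}$ is closed and nonempty) and running a standard subsequence-of-subsequences argument upgrades this to a full sequence. The main obstacle I anticipate is the surjectivity $\iota(\R^{k-1}) = \CS_v$, which crucially uses the hypothesis that $\Par_v$ equals, and does not merely contain, a $k$-flat; a secondary technical point is the flow-equivariant parametrization needed to ensure each $F_n$ passes through $v_n$ rather than lying somewhere else in $\CS_{v_n}$.
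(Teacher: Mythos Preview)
Your argument is correct. The paper states this corollary without proof, treating it as immediate from \thmref{pedro 4.1} together with the sentence just preceding it (that every $\Par_w$ contains a $k$-flat, obtained by taking limits of $k$-flats $\Par_{u_m}$ with $u_m \in \mathcal{A}$, $u_m \to w$); your proof is precisely the natural unpacking of that implicit reasoning---the $(k-1)$-flats $F_n \subseteq \CS_{v_n}$ through $v_n$ subconverge into the $(k-1)$-flat $\CS_v$, forcing equality and hence continuity of $\CS$ at $v$. One minor citation quibble: upper semicontinuity of $\CS$ is invoked in the \emph{proof} of \lemref{ca equivalences} rather than being the content of item~\itemref{it: pt of cont}, but this does not affect the mathematics.
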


\begin{corollary}
$GX_{v(\infty)} \subset \mathcal{A}$ for all $v \in \mathcal{U}$.
I.e.~ for recurrent $v \in GX$, if $v$ is completely approachable then so is every geodesic forward asymptotic to $v$.
\end{corollary}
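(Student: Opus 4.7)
The plan is to combine two isometric embeddings from earlier sections to pin down the isometry type of $\Par_w$. Set $k = \rank(X)$. By the preceding corollary, the hypothesis $v \in \mathcal{U} \subseteq \mathcal{A}$ says $\Par_v$ is a $k$-flat, so $\CS_v$ is isometric to $\R^{k-1}$. Since $\Par_w = \CS_w \times \R$, it suffices to show $\CS_w \cong \R^{k-1}$, which by the same corollary will give $w \in \mathcal{A}$.

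First, I would invoke \lemref{Eberlein plus approachable} for this $v$ and $w$. Its required hypothesis that $v(\infty)$ and $w(-\infty)$ are $\Isom(X)$-dual is automatic: $w(\infty) = v(\infty)$ and $X$ satisfies the duality condition, so $w(\infty)$ and $w(-\infty)$ are $\Isom(X)$-dual. This yields an isometric embedding $f \colon (\CS_v, v) \into (\CS_w, w)$. Next, since $v$ is recurrent and $w(\infty) = v(\infty)$, \lemref{recurrent embeddings} yields an isometric embedding $g \colon (\CS_w, w) \into (\CS_v, v')$ for some $v' \parallel v$.

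The composition $g \circ f$ is then an isometric embedding of $\CS_v \cong \R^{k-1}$ into itself. A standard fact---every isometric embedding of $\R^n$ into itself is surjective, because its image is a closed convex subset of $\R^n$ which is geodesically complete (as it is isometric to $\R^n$) and hence has empty topological boundary, so must equal all of $\R^n$---shows $g \circ f$ is surjective. Therefore $g$ itself is surjective, hence an isometry, so $\CS_w \cong \R^{k-1}$. Thus $\Par_w$ is a $k$-flat and $w \in \mathcal{A}$.

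The one point requiring care is keeping both embeddings at the level of cross sections rather than passing to the larger parallel sets (where basepoints along the flow direction might shift and complicate the composition argument); invoking the sharper \lemref{Eberlein plus approachable} in place of the weaker \corref{approachable plus nonwandering} sidesteps this completely. Beyond that bookkeeping I do not anticipate a serious obstacle.
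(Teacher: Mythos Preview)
Your argument is correct. The paper's own proof is a single citation of \lemref{recurrent embeddings}: since $v$ is recurrent and $w(\infty)=v(\infty)$, that lemma embeds $\CS_w$ into $\CS_v\cong\R^{k-1}$, and then the (already-established) second clause of the Main Theorem says $\CS_w$ contains a $(k-1)$-flat, forcing $\CS_w\cong\R^{k-1}$.

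Your route differs only in how you supply the embedding in the reverse direction: rather than invoking the Main Theorem's ``$\R^k$ embeds in every $P_w$'' clause, you call \lemref{Eberlein plus approachable} directly (using the duality condition on $w$) to build $f\colon\CS_v\hookrightarrow\CS_w$, and then close up with the surjectivity-of-self-embeddings trick for $\R^{k-1}$. This is a perfectly good alternative and arguably slightly more self-contained, since it avoids appealing to the full Main Theorem and instead reuses the same pair of lemmas that drove \thmref{pedro 4.1}. The paper's version is terser because at this point the Main Theorem is already in hand, so only one embedding (from \lemref{recurrent embeddings}) needs to be named. Your bookkeeping remark about staying at the cross-section level is well taken; note the paper's approach sidesteps that issue entirely, since the Main Theorem gives a $(k-1)$-flat sitting inside $\CS_w$ as a subset, not just an abstract embedding.
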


\begin{proof}
\lemref{recurrent embeddings}.
\end{proof}

\section{Application: a little bit of rank rigidity}

\label{sec:application}

Write $\rank(X)$ for the intrinsic rank of $X$ and $\dim(\bdT X)$ for the geometric dimension of its Tits boundary.
We now show that if $\rank(X) = 1 + \dim(\bdT X)$, then we have some rigidity.

\begin{theorem}					\label{maximal rank}
Let $X$ be a proper, geodesically complete $\CAT(0)$ space.
Assume some subgroup $\Gamma \le \Isom(X)$ satisfies the duality condition, and that $\rank(X) = 1 + \dim(\bdT X)$.
Then one of the following holds.
\begin{enumerate}
\item\label{it:mr minimal}  $\Gamma$ acts minimally on $\bd X$. \item\label{it:mr building}  $X$ is a symmetric space or Euclidean building of rank $\ge 2$.
\item\label{it:mr product}  $X$ splits as a nontrivial product.
\end{enumerate}
\end{theorem}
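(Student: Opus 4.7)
The plan is to set $k := \rank(X)$ and split into cases. By hypothesis $k = 1 + \dim(\bdT X)$, and by the Main Theorem there is a dense $G_\delta$ set $\mathcal{A} \subseteq GX$ of geodesics $v$ with $\Par_v$ isometric to $\R^k$; in particular every geodesic of $X$ lies in some $k$-flat, and the boundary $\bd \Par_v$ of each such flat is a round $(k-1)$-sphere of top dimension in $\bdT X$. Thus the top-dimensional apartments supplied by $\mathcal{A}$ cover $\bdT X$.

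First I would handle $k = 1$. In this case every $v \in \mathcal{A}$ is parallel only to itself, so (as observed in the introduction, via Ballmann's Theorem III.3.4) the duality condition yields a dense set of rank-one axes for $\Gamma$. The attracting/repelling North--South dynamics of these contracting isometries, together with the density of their fixed points in $\bd X$, then forces $\Gamma$ to act minimally on $\bd X$, which is case \itemref{it:mr minimal}.

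Next, assume $k \ge 2$. If $X$ admits a nontrivial $\CAT(0)$ splitting we are in case \itemref{it:mr product}, so I may assume $X$ is irreducible; then the goal is case \itemref{it:mr building}. The strategy is to promote the covering of $\bdT X$ by top-dimensional apartments $\bd \Par_v$ to a thick spherical building structure, and then invoke Leeb's theorem: a proper geodesically complete $\CAT(0)$ space whose Tits boundary is a thick spherical building of dimension $\ge 1$ is either a symmetric space of noncompact type or a Euclidean building of rank $\ge 2$. Duality supplies the $\Gamma$-dynamics needed to match apartments combinatorially, irreducibility rules out degenerate non-thick configurations, and the complete approachability of generic geodesics from the Main Theorem provides the continuity of apartments under approximation.

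The main obstacle --- the delicate heart of the argument --- is verifying \emph{thickness}, i.e.\ that generically more than two apartments share each codimension-one subsphere in $\bdT X$. This is where the maximal-rank hypothesis $k = 1 + \dim(\bdT X)$ should be essential: if apartments sat uniquely over their codimension-one faces, then by approximating a generic $v \in \mathcal{A}$ by nearby flats inside a common larger $k$-flat one could enlarge $\Par_v$ beyond a $k$-flat, contradicting the Main Theorem. In executing this step one also has to rule out exceptional $\Gamma$-invariant subconfigurations in $\bdT X$ compatible with irreducibility of $X$, and I would expect this to be the technical point requiring the most care.
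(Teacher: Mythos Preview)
Your strategy diverges from the paper's, and the $k \ge 2$ case has a genuine gap.

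The paper does not split on $k$. It assumes case \itemref{it:mr minimal} fails and then applies Lytchak's rigidity theorem for finite-dimensional geodesically complete $\CAT(1)$ spaces: if such a space contains a proper closed \emph{involutive} subset (meaning $p \in A$ and $\angle(p,q) = \pi$ imply $q \in A$), then it is a spherical join or a spherical building. The maximal-rank hypothesis is used only to show $\bdT X$ is geodesically complete, via the covering by top-dimensional round spheres $\bd P_v$. Non-minimality is what supplies the involutive set: for any $v \in GX$ the union of orbit closures $\overline{\Gamma v(-\infty)} \cup \overline{\Gamma v(\infty)}$ is proper, closed, and involutive. Lytchak then yields a join or building structure on $\bdT X$, and Leeb's theorem converts this into \itemref{it:mr building} or \itemref{it:mr product}. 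Your $k=1$ argument via rank-one axes is fine on its own, but in the paper that case is subsumed: non-minimality already forces rank $\ge 2$ by Ballmann's Theorem~III.2.4.

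In your $k \ge 2$ case you never invoke non-minimality; you are attempting to show that irreducibility plus maximal rank alone force $X$ to be a symmetric space or building. That is strictly stronger than the theorem---it would resolve rank rigidity unconditionally in the maximal-rank case, not just the trichotomy---and your thickness sketch does not work. The assertion that failure of thickness would let one ``enlarge $\Par_v$ beyond a $k$-flat'' is backwards: failure of thickness is exactly the situation where no such enlargement is available, and approximating $v \in \mathcal{A}$ by nearby $v' \in \mathcal{A}$ only produces other $k$-flats, never larger ones. Without either the non-minimality input feeding into Lytchak's theorem or some replacement for it, there is no visible mechanism to force the required branching of apartments.
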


\begin{proof}
Assume case \itemrefstar{it:mr minimal} does not hold.
Our plan is to use Lytchak's rigidity theorem \cite[Main Theorem]{lytchak05} on the Tits boundary $\bdT X$ of $X$.
This theorem says that if $\bdT X$ is geodesically complete and contains proper closed involutive set (a set $A$ being \emph{involutive} meaning for every $p \in A$ and $q \in \bdT X$ with $\angle(p,q) = \pi$, we have $q \in A$), then $\bdT X$ is a spherical building or join.

So we first show $\bdT X$ is geodesically complete.
Since $\rank(X) = 1 + \dim(\bdT X)$, the Tits boundary $\bdT X$ of $X$ is covered by Euclidean unit spheres of dimension $\dim(\bdT X)$.
Thus $\bdT X$ is geodesically complete (by applying \cite[Lemma~ 3.1]{bl-centers} to the link of each point).

Next we find a proper closed involutive subset of $\bdT X$.
Since $\Gamma$ satisfies the duality condition, the orbit-closure $\cl{\Gamma p}$ in $\bd X$ of every point $p \in \bd X$ is a minimal nonempty closed invariant subset of $\bd X$ \cite[Proposition~ III.1.9]{ballmann}; these minimal sets are all pairwise disjoint.
(Note that by \emph{closed} we mean here closed in the cone topology.
But by lower semicontinuity of the Tits metric, they are then also closed under the Tits metric.)
So fix an arbitrary $v \in GX$, and consider the minimal sets $M = \cl{\Gamma v(-\infty)}$ and $N = \cl{\Gamma v(\infty)}$ in $\bd X$.
Now the set $M \cup N$ is clearly closed in $\bdT X$.
In \cite{ricks-onedim} (Lemma 27 and first remark following), it is shown that $M \cup N$ is proper and involutive, assuming $\Gamma$ is discrete.
However, the same arguments apply without that assumption, by simply passing to subsequences instead of using ultrafilters, so we conclude that $M \cup N$ is a proper closed and involutive subset of $\bdT X$.

Thus Lytchak's rigidity theorem \cite[Main Theorem]{lytchak05} applies, and we conclude that $\bdT X$ is a spherical join or building of dimension at least $1$.
By Leeb's theorem \cite[Main Theorem]{leeb}, either case \itemrefstar{it:mr building} or \itemrefstar{it:mr product} holds.
\end{proof}

\begin{remarks}
(1)
If $\Isom(X)$ acts cocompactly on $X$, then $1 + \dim(\bdT X)$ coincides with the dimension of a maximal flat in $X$ by Kleiner \cite[Theorem C]{kleiner}.
Thus in this case, the condition $\rank(X) = 1 + \dim(\bdT X)$ is equivalent to the condition $\rank(X) = \max \setp{\dim F}{F \text{ is a flat in } X}$.

(2)
Let $X$ be a proper, geodesically complete $\CAT(0)$ space satisfying the duality condition.
By Ballmann \cite[Theorem III.2.3]{ballmann}, case (i) is equivalent to the geodesic flow on $GX$ having a dense orbit mod $\Gamma$.
\end{remarks}

Using the deRham decomposition of $X$ (which exists and is unique by Foertsch and Lytchak \cite[Theorem 1.1]{fl}), we can state the following corollary.

\begin{corollary}				\label{maximal rank corollary}
Let $X$ be a proper, geodesically complete $\CAT(0)$ space that satisfies the duality condition.
Let $X = X_1 \times \dotsb \times X_n$ be the maximal de Rham decomposition of $X$, so that each $X_i$ is neither compact nor a product.
Assume $\rank(X) = 1 + \dim(\bdT X)$.
Then for each de Rham factor $X_i$ of $X$, either
\begin{enumerate}
\item\label{it:mrc minimal}  $\Isom(X_i)$ acts minimally on $\bd X_i$ and the geodesic flow on $GX_i$ has a dense orbit mod $\Isom(X_i)$, or
\item\label{it:mrc building}  $X_i$ is a symmetric space or Euclidean building of rank at least two.
\end{enumerate}
\end{corollary}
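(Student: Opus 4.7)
The plan is to apply \thmref{maximal rank} to each de Rham factor $X_i$ individually, with $\Gamma = \Isom(X_i)$. Almost everything reduces to verifying the hypotheses of \thmref{maximal rank} factor-wise; the key non-trivial point is showing that the rank condition $\rank(X) = 1 + \dim(\bdT X)$ descends to each $X_i$.

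First I would observe that each $X_i$ is itself a proper, geodesically complete $\CAT(0)$ space (these properties pass to factors), and satisfies the duality condition by the remark after the definition of the duality condition \cite[Remark III.1.10]{ballmann}. Thus the Main Theorem applies to each $X_i$, and $\rank(X_i)$ is well-defined.

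Second, I would establish the equality $\rank(X_i) = 1 + \dim(\bdT X_i)$ for each $i$. For this I need two facts:
\begin{enumerate}
\item \textbf{Additivity of rank:} $\rank(X) = \sum_i \rank(X_i)$. This follows by picking, for each $i$, a geodesic $v_i$ in the dense $G_\delta$ set $\mathcal{A}_i \subset GX_i$ provided by the Main Theorem for $X_i$, so $P_{v_i}$ is a $\rank(X_i)$-flat; then for any $v \in GX$ whose component speeds are all positive and whose components equal $v_i$ (up to rescaling), $P_v = \prod_i P_{v_i}$ is a flat of dimension $\sum \rank(X_i)$. Such $v$ form a dense set in $GX$, and they meet $\mathcal{A}$ (the set for $X$), forcing $\rank(X) = \sum_i \rank(X_i)$.
\item \textbf{Additivity of Tits dimension:} $\dim(\bdT X) = \sum_i \dim(\bdT X_i) + (n-1)$, since the Tits boundary of a product is the spherical join of the boundaries and spherical join of $S^{p}$ and $S^{q}$ has dimension $p+q+1$. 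Equivalently, $1 + \dim(\bdT X) = \sum_i (1 + \dim(\bdT X_i))$.
\item \textbf{Pointwise upper bound:} $\rank(X_i) \le 1 + \dim(\bdT X_i)$, since the Main Theorem gives a $\rank(X_i)$-flat inside $X_i$, whose visual boundary is a round $(\rank(X_i)-1)$-sphere of diameter $\pi$ in $\bdT X_i$.
\end{enumerate}
Combining (i) and (ii) with the hypothesis $\rank(X) = 1 + \dim(\bdT X)$ gives $\sum_i \rank(X_i) = \sum_i (1 + \dim(\bdT X_i))$, and termwise comparison with (iii) forces equality for each $i$.

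Third, I would apply \thmref{maximal rank} to $X_i$ with $\Gamma = \Isom(X_i)$. Conclusion \itemrefstar{it:mr product} of that theorem is ruled out because the de Rham decomposition is maximal, so each $X_i$ is not a non-trivial product. Thus either \itemrefstar{it:mr minimal} or \itemrefstar{it:mr building} holds. In the former case, Remark (2) immediately following \thmref{maximal rank} gives the equivalence of minimality on $\bd X_i$ with the existence of a dense orbit of the geodesic flow on $GX_i$ mod $\Isom(X_i)$. These are precisely the two alternatives \itemrefstar{it:mrc minimal} and \itemrefstar{it:mrc building} of the corollary.

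The main obstacle is part (i) above: verifying that the intrinsic rank is additive under the de Rham decomposition. The Tits-dimension additivity (ii) and the sphere-in-boundary bound (iii) are standard, but tracking how the parallel set structure of $X$ interacts with the dense $G_\delta$ sets $\mathcal{A}_i$ of the factors requires some care — in particular, one must be sure to work with product geodesics all of whose component speeds are nonzero, and verify that the completely approachable property is compatible with product decompositions.
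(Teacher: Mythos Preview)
Your approach is exactly the paper's: apply \thmref{maximal rank} to each factor $X_i$ with $\Gamma = \Isom(X_i)$ and rule out the product alternative by maximality of the de Rham decomposition. The paper's own proof is a one-line assertion that ``each $X_i$ satisfies the hypotheses of \thmref{maximal rank},'' so your additivity argument for $\rank$ and $1+\dim(\bdT\,\cdot\,)$ simply fills in what the paper leaves implicit; one small point to tighten is that ``dense'' alone does not guarantee your product-type geodesics meet $\mathcal{A}$---either observe that they in fact form a dense $G_\delta$, or (more simply) prove the two inequalities $\rank(X)\lessgtr\sum\rank(X_i)$ separately using the ``moreover'' clause of the Main Theorem.
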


\begin{proof}
Each $X_i$ satisfies the hypotheses of \thmref{maximal rank} with $\Gamma_i = \Isom(X_i)$, but none splits as a nontrivial product by hypothesis.
\end{proof}

\begin{remark}
Let $X = X_1 \times \dotsb \times X_n$ be as in \corref{maximal rank corollary}, except that $\rank(X) \neq 1 + \dim(\bdT X)$.
If the $\CAT(0)$ Rank Rigidity Conjecture holds, then at least one of the de Rham factors $X_i$ must admit a rank one axis. \end{remark}

\appendix

\section{A surface example}

\begin{theorem}						\label{surface theorem}
Let $S$ be an orientable closed surface of genus $>1$, and let $g = g_0$ be a
$C^\infty$ nonpositively curved Riemannian metric on $S$.
Then there is a sequence $g_n$ of $C^\infty$ nonpositively curved Riemannian metrics on $S$ such that $g_n$ $C^0$-converges to a $C^0$ nonpositively curved Riemannian metric $g_\infty$ on $S$ under which every closed $g_\infty$-geodesic is contained in an isometrically immersed flat cylinder.
\end{theorem}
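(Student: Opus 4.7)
The plan is a recursive construction modelled on the sketch in the Introduction. First, I would enumerate the nontrivial free homotopy classes of primitive closed curves on $S$ as $\alpha_1,\alpha_2,\ldots$, which is possible since $\pi_1(S)$ is countable. I would then inductively build a sequence of smooth nonpositively curved metrics $g_n$ starting from $g_0$ by performing the $n$th modification in a thin tubular neighborhood of the $g_{n-1}$-geodesic representative $c_n$ of $\alpha_n$, inserting an isometrically immersed flat cylinder of width $\epsilon_n$ about $c_n$, subject to the $C^0$-control $\|g_n-g_{n-1}\|_{C^0}<2^{-n}$. The sequence then converges uniformly to a $C^0$ Riemannian metric $g_\infty$, whose $\CAT(0)$ property is preserved in the limit by standard Alexandrov-geometry stability.

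The single-step modification is the routine ingredient. Working on the universal cover $\tilde S$ about a lift $\tilde c_n$ of $c_n$ (passing to $\tilde S$ handles the case that $c_n$ is not simple, since the modification will be $\pi_1$-equivariant by construction), I would introduce Fermi coordinates $(s,t)\in\R\times(-\delta_n,\delta_n)$ in which the metric takes the form $dt^2+f(t,s)^2\,ds^2$, with nonpositive curvature equivalent to $f_{tt}\geq 0$. I would replace $f$ on $|t|<\delta_n$ with a function $\tilde f$ that equals $f(0,s)$ for $|t|\leq\epsilon_n$, equals $f$ for $|t|\geq\delta_n$, and is smoothly interpolated in between while maintaining $\tilde f_{tt}\geq 0$. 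This yields a smooth nonpositively curved metric that is flat on $|t|<\epsilon_n$, producing an isometrically immersed flat cylinder $T_n$ of width $2\epsilon_n$ whose core lies in class $\alpha_n$.

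The main obstacle, and the source of the ``care'' alluded to in the Introduction, is ensuring that for $m>n$ the modification near $c_m$ does not destroy an earlier flat cylinder $T_n$ when $c_m$ passes through $T_n$. My remedy is to shrink $\epsilon_m$ and $\delta_m$ fast enough that the support of the $(m-1)\to m$ modification is disjoint from $\bigcup_{i<m}T_i$, noting that inside each $T_i$ the metric is already flat and requires no further change; one may simply perform the interpolation on those arcs of $c_m$ lying outside $\bigcup_{i<m}T_i$. In this way each $T_n$ survives verbatim into $g_\infty$, and its core remains a closed $g_\infty$-geodesic representing $\alpha_n$ contained in a flat cylinder.

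Finally, given any closed $g_\infty$-geodesic $\gamma$, say representing $\alpha_n$, I would lift $\gamma$ and the core of $T_n$ to the universal cover of $(S,g_\infty)$; these lifts are axes of the same deck transformation and hence parallel geodesics in a $\CAT(0)$ space, so by the flat strip theorem they bound a flat strip, which projects to an isometrically immersed flat cylinder in $S$ containing $\gamma$, as desired. The delicate part of the whole argument is the inductive geometric control in the third paragraph; once that bookkeeping is verified, the remaining convergence, $\CAT(0)$-limit, and flat strip arguments are standard.
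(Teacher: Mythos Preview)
Your overall strategy (enumerate homotopy classes, insert flat cylinders inductively with $C^0$-control, protect earlier cylinders, conclude via the flat strip theorem) matches the paper's, and your final flat-strip argument actually fills in a detail the paper leaves implicit. But there is a real gap in the single-step modification.

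You assert that in Fermi coordinates $dt^2+f(t,s)^2\,ds^2$ one may replace $f$ by $\tilde f$ with $\tilde f\equiv 1$ for $|t|\le\epsilon$, $\tilde f=f$ for $|t|\ge\delta$, and $\tilde f_{tt}\ge 0$ in between. Such an interpolation need not exist: since $\tilde f_t$ must rise monotonically from $0$ to $f_t(\delta,s)$ on $[\epsilon,\delta]$, one needs
\[
f(\delta,s)-1\;=\;\int_0^\delta f_t(t,s)\,dt\;\le\;(\delta-\epsilon)\,f_t(\delta,s),
\]
equivalently $\epsilon$ cannot exceed the $t$-barycenter $\int_0^\delta t\,f_{tt}\,dt\big/\int_0^\delta f_{tt}\,dt$ of $f_{tt}(\cdot,s)$. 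One can build smooth nonpositively curved metrics for which this barycenter tends to $0$ along $\gamma$ (e.g.\ $f_{tt}(t,s)=e^{-1/s^2}\psi(t/s^2)$ near a point where the curvature vanishes to infinite order), so no uniform $\epsilon>0$ works. The paper's remedy is to modify \emph{both} metric coefficients: first replace $f(t,s)$ by $f(\rho_\eta(t),s)$ with $\rho_\eta\equiv 0$ near $0$ and $\rho_\eta(t)=t-2\eta$ for $t\ge 3\eta$ (this keeps $\partial_t^2[f(\rho_\eta(t),s)]\ge 0$ because $f_{tt},f_t,\rho''_\eta\ge 0$ for $t\ge 0$, but shifts the metric outward by $2\eta$), then pull back by $t\mapsto\sigma_\eta(t)$ with $\sigma_\eta(t)=t+2\eta$ for large $t$ to undo the shift. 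A side effect is that the resulting metric differs from $g$ even on already-flat arcs (only the $dt^2$ coefficient changes there), so the paper must separately switch off $\sigma_\eta$ over such arcs, at the cost of letting each earlier cylinder's width shrink a little at every later step---whereas your scheme, \emph{had} the $\tilde f$-interpolation been available, would have preserved earlier cylinders for free.
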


\begin{remarks}
(1)
The $C^0$ metric $g_\infty$ induces a geodesic metric on $S$.
In this case ``nonpositively curved'' means locally $\CAT(0)$.
(See Theorem 4.11 in \cite{burtscher}.
Here Burstscher proves $C^0$ Riemannian manifolds are length spaces.
But compact length spaces are geodesic spaces; see \cite[p.35]{bridson} for this.)

(2)
With more care one can possibly arrange to have a $C^\infty$ path $g_t$ of such metrics $C^0$-converging to $g_\infty$.
\end{remarks}

\begin{proof}
Let $S$ and $g$ as in the Theorem, and give $S$ an orientation.
Let $\gamma:X\ra S$ be a nontrivial geodesic in $(S,g)$, where $X$ is either the circle $\bS^1$, or the interval $[0,1]$.
Also, we will denote by $X(\ell)$ the circle of length $\ell$ or the interval $[0,\ell]$.
For each $u\in X$ let $V(u)\in T_{\gamma(u)}S$ be the unit vector perpendicular to $\gamma'(u)$ and such that $(\gamma'(u), V(u))$ is positively oriented.
We get a map $E=E_{\gamma,g}:X\times\R\ra S$ given by $E(u,s)=$ exp$_{\gamma(u)}(sV(u))$.
Then $E$ is an immersion near $\gamma$, that is, there is $\epsilon>0$ such that $E=E_{\gamma,g}$ restricted to $X\times [-2\epsilon,2\epsilon]$ is an immersion.
The supremum of all such $\epsilon$ will be denoted by $\epsilon_{\gamma,g}$.
Therefore, for all $\epsilon<\epsilon_{\gamma,g}$, the map $E$ is an immersion on $X\times [-2\epsilon,2\epsilon]$.
In this case the pullback of $g$ to $X\times [-2\epsilon,2\epsilon]$ is a Riemannian metric, which we denote by $g_\gamma$.
If $X$ is an interval we will assume $\gamma$ extends to a larger interval $X'$; if $X=\bS^1$ then $X'=X$.

For two Riemannian metrics $g_1,g_2$ we write $g_1\leq g_2$ if $g_1(x,x)\leq g_2(x,x)$ for all tangent vectors $x$.
An \defn{arc} in $X$ is a subspace $A\sbs X$ homeomorphic to a closed interval.
The set $\gamma (A)$ will also be called an arc.
We will call the set $E(A\times [-\epsilon,\epsilon])$ the \defn{$\epsilon$-rectangle of $A$}.
We will need the following lemma.

\begin{lemma}						\label{surface sublemma 1}
Let $S$, $\gamma$, $g$ and $E$ as above.
Let $\epsilon<\epsilon_{\gamma,g}/2$, and $\delta>0$.
Then there is a $C^\infty$ nonpositively curved Riemannian metric $g_1$ on $S$ such that
\begin{enumerate}
\item[(1)]
$(1-\delta) g_0 \leq g_1\leq (1+ \delta) g_{0}$.
\item[(2)]
$g_1=g$ outside $E(X'\times [-\epsilon/2,\epsilon/2])$.
\item[(3)]
The curve $\gamma$ is a $g_1$-geodesic.
\item[(4)]
There is $\epsilon'\in(0,\epsilon/4)$ such that $X\times [-\epsilon',\epsilon']$ with metric $(g_{1})_{\gamma}$ is isometric to the (flat) Euclidean product of $X(\ell)$ with $[-\epsilon',\epsilon']$, where $\ell$ is the $g_1$-length of $X$.
\item[(5)]
Let $A$ be an arc in the interior of $X$
and assume that the curvature $K_g$ is zero on an open set containing $E(A\times [-\epsilon,\epsilon])$.
Then we can arrange that $g_\gamma=(g_{1})_\gamma$ on $A\times [-\epsilon,\epsilon]$.
\end{enumerate}
\end{lemma}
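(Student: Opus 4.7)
The plan is to work in Fermi coordinates along $\gamma$, construct a smooth modification of the metric profile that flattens the metric in a thin cylindrical strip around $\gamma$, and push this modification back to $S$ via the immersion $E$. In Fermi coordinates $(u, s)$ on $X' \times [-2\epsilon, 2\epsilon]$, the pulled-back metric $E^* g$ takes the canonical form $f(u, s)^2 \, du^2 + ds^2$, where $f(u, 0) = 1$ and $\partial_s f(u, 0) = 0$ because $\gamma$ (the curve $s = 0$) is a unit-speed geodesic, and the nonpositive Gauss curvature $K = -\partial_s^2 f / f$ translates to $\partial_s^2 f(u, s) \ge 0$; equivalently, $s \mapsto f(u, s)$ is smooth, convex, and attains its minimum value $1$ at $s = 0$.

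The heart of the proof is the construction of a smooth $\tilde f(u, s)$ on $X' \times \R$ that is convex in $s$, is constant in $s$ (equal to some $c(u) > 0$) on $|s| \le \epsilon'$, matches $f$ with infinite-jet agreement for $|s| \ge \epsilon/2$, and is uniformly close to $f$. A crucial observation is that condition (4) asks only for a flat Euclidean product structure, not unit length along $\gamma$, so it permits $c(u) \neq 1$; I would set $c(u) = f(u, \epsilon')$ so that $\tilde f$ glues smoothly with the central flat strip at $|s| = \epsilon'$. On the transition annulus $\epsilon' \le |s| \le \epsilon/2$, I prescribe $\partial_s^2 \tilde f = \rho(u, s) \ge 0$ via an ansatz $\rho = \chi(s) \partial_s^2 f(u, s) + a(u) \phi_1(s) + b(u) \phi_2(s)$, where $\chi$ is a fixed smooth cutoff with flat derivatives at both ends and $\phi_1, \phi_2$ are fixed non-negative bumps concentrated near the two endpoints of the annulus. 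The coefficients $a(u), b(u)$ are determined smoothly by the two pointwise-in-$u$ moment conditions $\int \rho \, ds = \partial_s f(u, \epsilon/2)$ and $\int (\epsilon/2 - s) \rho \, ds = f(u, \epsilon/2) - f(u, \epsilon')$ that enforce smooth matching of $\tilde f$ with $f$ at $|s| = \epsilon/2$; they are non-negative because the convexity of $f$ gives $(f(u, \epsilon/2) - f(u, \epsilon'))/(\epsilon/2 - \epsilon') \le \partial_s f(u, \epsilon/2)$, which places the required centroid of $\rho$ inside the support $[\epsilon', \epsilon/2]$.

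Once $\tilde f$ is constructed, I would push $\tilde f^2 \, du^2 + ds^2$ forward via $E$ (shrinking $\epsilon$ if needed so that $E$ is an embedded tubular neighborhood of $\gamma$), obtaining a smooth metric on $E(X' \times [-\epsilon, \epsilon])$ that glues with $g$ outside, defining $g_1$ on all of $S$. The five conditions are then verified: (2) by construction; (3) from $\partial_s \tilde f(u, 0) = 0$; (4) from $\tilde f$ being constant in $s$ on the central strip, so $(g_1)_\gamma = c(u)^2 du^2 + ds^2$ is a flat product; (5) because where $K_g = 0$ one has $f \equiv 1$, which forces $c(u) = 1$ and the right-hand sides of the moment conditions to vanish, yielding $\rho \equiv 0$ and hence $\tilde f \equiv 1 \equiv f$; and (1) by choosing $\epsilon'$ small, which sends $f(u, \epsilon')$ and $\partial_s f(u, \epsilon')$ uniformly to $1$ and $0$, making $\tilde f$ arbitrarily close to $f$.

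The hard part will be verifying simultaneously that the ansatz for $\rho$ yields smoothness (with infinite-jet matching at both endpoints of the transition annulus), non-negativity, the two moment conditions, and smooth dependence on $u$. The essential subtlety, which forces the choice $c(u) = f(u, \epsilon')$ rather than $c(u) = 1$, is a genuine convexity obstruction: if one insists $\tilde f \equiv 1$ on $|s| \le \epsilon'$, then when $\partial_s^2 f$ is concentrated near $s = 0$ the required secant slope from $(\epsilon', 1)$ to $(\epsilon/2, f(u, \epsilon/2))$ can exceed $\partial_s f(u, \epsilon/2)$, so no smooth convex $\tilde f$ with the prescribed boundary data exists. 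The freedom to let $\gamma$ change length in $g_1$ is exactly what circumvents this obstruction.
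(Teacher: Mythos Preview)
Your approach has a genuine gap in the nonnegativity claim for $a(u)$ and $b(u)$. The convexity inequality you cite shows only that the required centroid of $\rho$ lies in the \emph{closed} interval $[\epsilon', \epsilon/2]$, but since $\rho$ must vanish (to infinite order) at $s = \epsilon'$ in order to glue smoothly with the constant, any admissible nonnegative $\rho$ has centroid \emph{strictly} greater than $\epsilon'$. The boundary case occurs exactly when $f$ is affine on $[\epsilon', \epsilon/2]$ with $f_s(u, \epsilon') > 0$: if $f_{ss}(u,\cdot)$ happens to be supported in $[0,\epsilon']$ (perfectly consistent with smoothness and nonpositive curvature), your two moment conditions become $\int \rho = p$ and $\int(\epsilon/2 - s)\,\rho = (\epsilon/2 - \epsilon')\,p$ with $p = f_s(u,\epsilon') > 0$, forcing the centroid to $\epsilon'$. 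No smooth nonnegative $\rho$ with $\rho(\epsilon') = 0$ can achieve this, and your linear system in fact returns $b(u) = p(\epsilon' - m_1)/(m_2 - m_1) < 0$. In short, the obstruction you correctly diagnosed for the choice $c(u) = 1$ resurfaces one derivative up: demanding $\partial_s \tilde f(u,\epsilon') = 0$ when $\partial_s f(u,\epsilon')$ may be positive is just as fatal. (A secondary gap: you assume $E$ becomes an embedding after shrinking $\epsilon$, but a closed geodesic with a transverse self-intersection never has an embedded tubular neighborhood.)

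The paper avoids both problems by a different mechanism. Rather than keeping $ds^2$ fixed and seeking a convex $\tilde f$ that matches $f$ outside, it reparametrizes the normal variable via an odd, convex-on-$[0,\infty)$ function $\rho_\eta$ which vanishes on $[-\eta,\eta]$ and equals $v - 2\eta$ for $v \ge 3\eta$. Then $v \mapsto f(u,\rho_\eta(v))$ is automatically convex in $v$ (chain rule: $f_{ss} \ge 0$, and $f_s \ge 0$ where $\rho_\eta \ge 0$, and $\rho_\eta'' \ge 0$), identically $1$ on $|v|\le\eta$, and equal to the \emph{shifted} profile $f(u, v - 2\eta)$ for $|v|\ge 3\eta$ --- so $\gamma$ stays unit speed and no two-moment problem ever arises. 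The shift is then absorbed by pulling back through a normal-direction diffeomorphism $v \mapsto \sigma_\eta(v)$, which alters the $dv^2$ coefficient on the transition annulus instead of the warping factor. The self-intersection case is handled separately: one first flattens along short embedded arcs through the double points (the interval case), then processes the full loop invoking property~(5) to leave those arcs untouched.
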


Postponing the proof of \lemref{surface sublemma 1} for the moment, we proceed with our proof of \thmref{surface theorem}.
Enumerate the free homotopy classes of loops in $S$:  $C_1, C_2,C_3,...$.
Write $\delta_n=\frac{1}{2^{n+1}+2}$.
As our first step just choose a closed $g$-geodesic $\gamma_1$ in $C_1$, and apply \lemref{surface sublemma 1} with $X=\bS^1$, $\gamma=\gamma_1$, $\delta=\delta_1$, and any $\epsilon<\epsilon_{\gamma_1,g}/2$ to obtain a metric $g_1$.
Write $\epsilon'=2\eta_1>\eta_1$, where $\epsilon'$ is as in \lemref{surface sublemma 1}.
That is, $\gamma_1$ is contained in an isometrically immersed flat cylinder $\mathcal{C}_1(2\eta_1)$ of width $2\eta_1>\eta_1$, with respect to the metric $g_1$.
(We will denote the image of $\mathcal{C}_1$ also by $\mathcal{C}_1$).
The next step is to choose a closed $g_1$-geodesic $\gamma_2$ in $C_2$, and apply \lemref{surface sublemma 1} with $X=\bS^1$, $\gamma=\gamma_2$, $\delta=\delta_2$, and $\epsilon=\epsilon_2$ small (how small will be determined below) to obtain a metric $g_2$.
Write $E_2=E_{\gamma_2,g_2}$ and $E_2(\epsilon_2)= E_2(\bS^1\times[-\epsilon_2,\epsilon_2])$.
The geodesic $\gamma_2$ is contained in a flat immersed cylinder $\mathcal{C}_2(2\eta_2)$ of some width $2\eta_2>\eta_2$.
But this step may change the cylinder of step 1.
This is an unavoidable problem, but we can minimize the problem: because of (5) of \lemref{surface sublemma 1}, and the fact that the width of the cylinder in step 1 is \defn{strictly larger} than $\eta_1$, we can choose $\epsilon_2$ so small that the new $g_2$-width of the cylinder of $\gamma_1$ is still $>\eta_1$ (even though the width decreases a bit).
Here is a more detailed description of how to do this.
Let $\mathcal{C}_1(\frac{3}{2}\eta_1)\sbs\mathcal{C}(2\eta_1)$ be the flat isometrically immersed cylinder of width $\frac{3}{2}\eta_1$.
The intersection of the image of $\gamma_2$ with the cylinder $\mathcal{C}_1(\frac{3}{2}\eta_1)$ of step 1 is a finite set of arcs $\gamma_2(A_i)$.
Choose $\epsilon_2$ small enough so that the $\epsilon_2$-rectangles of the $A_i$ are contained in the interior of $\mathcal{C}_1(2\eta_1)$ and the set
\[\bigg(\mathcal{C}_1(2\eta_1)\,\cap\,E_2(\epsilon_2)\bigg)
\,\setminus\,
\bigcup_i\epsilon_2{\mbox{-rectangle of}} \,\,A_i\]
is outside the $\eta_1$ neighborhood of $\gamma_1$.
Since the curvature is zero on $\mathcal{C}_1(2\eta_1)$, (5) of \lemref{surface sublemma 1} implies that we can arrange for the metrics $g_1$ and $g_2$ to coincide on the $\epsilon_1$-rectangles of the $A_i$.
In this way, after step 2, $\gamma_1$ is still contained in a flat isometrically immersed cylinder of width $>\eta_1$.

Now, proceed inductively to obtain $g_n$ and $\gamma_n$ contained in an isometrically immersed flat cylinder of width $>\eta_n$.
For the $n+1$ step we procceed similarly, choosing $\epsilon_{n+1}$ so small that all $\gamma_i$, $i\leq n$, are still contained in isometrically immersed flat cylinders of width $>\eta_i$.
In this way we define $g_n$ for $n=1,2,3,\dots$.
Next we prove convergence.

Recall $\delta_n=\frac{1}{2^{n+1}+2}$, so $1-\delta_n=\frac{2^{n+1}+1}{2^{n+1}+2}$ and $1+\delta_n=\frac{2^{n+1}+3}{2^{n+1}+2}\leq \frac{2^{n+1}+2}{2^{n+1}+1} =\frac{1}{1-\delta_n}$.
Now, from (1) of \lemref{surface sublemma 1} we have $ (1-\delta_n)g_n \leq g_{n+1}\leq (1+\delta_n) g_n \leq \frac{1}{1-\delta_n}g_n$.
Hence $a_n g\leq g_{n+1}\leq \frac{1}{a_n} g$, where $a_n=\prod_{i=1}^n (1-\delta_i)$.
One can show, by induction, that
$a_n=\frac{2^{n+3}+2}{2^{n+4}}\geq \frac{1}{2}$.
Hence $\frac{1}{2} g\leq g_{n+1}\leq 2 g$.

Let $x$ be a tangent vector.
Then (1) of \lemref{surface sublemma 1} implies
\[-\delta_n g_n(x,x)\leq g_{n+1}(x,x)-g_n(x,x)\leq \delta_ng_n(x,x).\]
Therefore
\[|g_{n+1}(x,x)-g_n(x,x)|\leq \delta_ng_n(x,x)\leq 2\delta_n g(x,x)
= \frac{2}{2+2^{n+1}}g(x,x)\leq \frac{1}{2^n}g(x,x).\]
Replacing $x$ by $x+y$ and using the triangular inequality we obtain
$|g_{n+1}(x,y)-g_{n}(x,y)|\leq \frac{1}{2^{n-1}}(g(x,x)+g(y,y))$.
Therefore, for each pair $x,y$ the sequence $g_n(x,y)$ is a Cauchy sequence, hence converges.
This gives a $C^0$-symmetric bilinear form $g_\infty$ on $TS$.
We certainly have $g_\infty(x,x)\geq 0$.
But we have showed that $\frac{1}{2}g\leq g_{n+1}$.
This shows $\frac{1}{2}g(x,x) \leq g_\infty(x,x)$.
Therefore $g_\infty$ is nondegenerate.
This proves the theorem.
\end{proof}

\begin{proof} [Proof of \lemref{surface sublemma 1}]
We assume $\gamma$ has speed 1.
Let $\ell$ be the length of $\gamma$ (recall $X$ is an interval or a circle).
For simplicity we change a bit the domains of $\gamma$ and $E$:  We replace $X$ by $X(\ell)$.
We use coordinates $(u,v)\in X(\ell)\times [-\epsilon,\epsilon]$.
The velocity vectors of the $u$-lines $u\mapsto (u,v_0)$ and $v$-lines $v\mapsto (u_0,v)$ will be denoted by $\p_u$ and $\p_v$, respectively.
Recall that $g_\gamma$ is the pullback of $g$ by the immersion $E$; we consider $X(\ell)\times [-\epsilon,\epsilon]$ with this metric.
Note that the $v$-lines are speed one geoesics, and the $g$-geodesic $\gamma$ corresponds to the $u$-line $u\mapsto (u,0)$.
We have $g_\gamma(\p_v,\p_v)=1$ and $g_\gamma(\p_u,\p_v)=0$.
Write $g_\gamma(\p_u,\p_u)=f^2>0$.
Note that $f(u,0)=1$, for all $u$.
Hence the metric $g_\gamma$ on $X(\ell)\times [-\epsilon,\epsilon]$ can be writen as $ f^2(u,v)du^2+dv^2$.
The curvature of this metric is $-\frac{f_{vv}}{f}$.
Hence $f_{vv}\geq 0$.
Also, since $u\mapsto (u,0)$ is a geodesic, one can deduce from the equations of a geodesic that $f_v(u,0)=0$, for all $u$.
Hence $v\mapsto f(u,v)$ has a minimum at $v=0$, and $f(u,v)\geq 1$, for all $(u,v)$.

To construct the metric $g_1$ we will need the following functions.
For $t\in [0,1]$, let $\rho_t:\R\ra\R$ be $C^\infty$ and such that
(1) $\rho_t(-z)=-\rho_t(z)$,
(2) $\rho_t(z)=z-2t$ for all $z\geq 3$,
(3) $\rho''_t(z)\geq 0$ for all $z\geq 0$,
(4) $|z-\rho_t(z)|\leq 2t$.
Note that property (4) for $t=0$ implies $\rho_0=1_\R$.
We also demand
(5) $\rho_1(z)=0$ whenever $|z|\leq 1$.
For $\eta>0$ define
$\rho_{\eta,t}(z)= \eta\rho_t(\frac{z}{\eta})$.
We write $\rho_\eta=\rho_{\eta,1}$.

We will also need the following functions.
For $\eta>0$ small and $t\in [0,1]$, let $\sigma_{\eta,t}:\R\ra\R$ be $C^\infty$ such that
(1) $\sigma_{\eta,t}(z)=z$, for $|z|\leq \eta$,
(2) $\sigma_{\eta,t}(-z)= -\sigma_{\eta,t}(z)$,
(3) $\sigma_{\eta,t} (z)=z+2\eta t$, for $z\geq \sqrt{\eta}$,
(4) $1\leq \frac{d}{dz}\sigma_{\eta,t}(z)\leq 1+3t\sqrt{\eta}$,
(5) $|\sigma_{\eta,t} (z) - z|\leq 2\eta t$,
(6) $|\frac{d}{dt} \sigma_{\eta,t}(z)| \leq 3\eta$, for all $z$ and $t\in [0,1]$.
Note that it follows that $\sigma_{\eta,0}=1_\R$.
We will write $\sigma_\eta=\sigma_{\eta,1}$.

We assume $0<\epsilon <\frac{1}{9}$, and $\eta>0$ with $\sqrt{\eta}\leq\epsilon/4$.
Define the diffeomorphism $S_{\eta}:X\times\R\ra X\times\R$ by
$S_{\eta}(u,v)=(u,\sigma_{\eta}(v))$.
On $X\times [-\epsilon-2\eta,\epsilon+2\eta]$ define the metric
$h_{\eta}=f^2(u,\rho_{\eta}(v))du^2+dv^2$.
Finally, on $X\times [-\epsilon,\epsilon]$ define the metric
$g_{\eta}=S_{\eta}^\ast h_{\eta}$, that is
\begin{equation}
g_{\eta}(u,v)=f^2\big(u,\rho_{\eta}(\sigma_{\eta}(v))\big)
du^2\,\,+\,\,\big( \sigma'_{\eta}(v)\big)^2dv^2\tag{$\ast$}
\end{equation}
We have the following properties.
\begin{enumerate}
\item[(a)]
For $|v|\geq \sqrt{\eta}$ and all $u$ we have $g_\gamma(u,v)=g_{\eta}(u,v)$.
\item[(b)]
$\frac{\p^2}{\p v^2}f(u,\rho_{\eta}(v))\geq 0$, hence $h_{\eta}$ is nonpositively curved.
Therefore $g_{\eta}$ is nonpositively curved.
\item[(c)]
$(1-C\sqrt{\eta})g_\gamma\leq g_{\eta}\leq (1+C\sqrt{\eta})g_\gamma$, for some constant $C$.
\item[(d)]
On $X\times [-\eta,\eta]$ we have $h_{\eta}=du^2+dv^2$, hence $X\times [-\eta,\eta]$ with metric $h_{\eta}$ is isometric to a flat cylinder.
Since $S_{\eta}$ sends $X\times [-\eta,\eta]$ to itself, the same is true for $X\times [-\eta,\eta]$ with metric $g_{\eta}$.
\end{enumerate}

\noindent
Properties (a), (b) and (d) follow directly from the definitions.
We prove (c).
From the definitions we have
\[|\rho_\eta(\sigma_{\eta}(v))-v|
\leq |\rho_\eta(\sigma_{\eta}(v))-\sigma_{\eta}(v)| + |\sigma_{\eta}(v)-v|
\leq 2\eta+2 \eta=4\eta.\]
Let $C_1$ be the $C^1$-norm of $f$.
Then
$|f(u,\rho_\eta(\sigma_{\eta}(v))-f(u,v)|\leq 4\eta C_1$.
Therefore
$|f^2(u,\rho_\eta(\sigma_{\eta}(v))-f^2(u,v)|\leq 4\eta C_1(C_1+C_1)
= 8C_1^2\eta$.
We write $C=8C_1^2+9$.
Let $x=a\p_u+b\p_v$ be a tangent vector.
Then
\begin{align*}
\bigg|g_\eta(u,v)(x,x)-g_\gamma(u,v)(x,x)\bigg|
&=
\bigg|\big(f^2(u,\rho_\eta(\sigma_{\eta}(v))a^2+(\sigma'_{\eta}(v))^2b^2\big)
- \big(f^2(u,v) a^2+b^2\big)\bigg| \\
&\leq \bigg|f^2(u,\rho_\eta(\sigma_{\eta}(v)))-f(u,v)\bigg|a^2
\,\,+\,\,
\bigg| (\sigma'_{\eta}(v))^2-1) \bigg| b^2 \\
&\leq C\eta a^2 \,\,+\,\,9\sqrt{\eta} \,b^2 \\
&\leq C\sqrt{\eta} \big( f^2(u,v)a^2+b^2\big)
\,\,=\,\,
C\sqrt{\eta}\,g_\gamma(u,v)(x,x).
\end{align*}

\noindent
In the last inequality we are using $\eta<1$, $f\geq 1$, $C\geq 9$.
Also, in the second inequality we are using (4) of the definition of $\sigma$.
This proves (c).

We prove one more property of the metric $g_\eta$ on $X\times [-\epsilon,\epsilon]$ given by ($\ast$).

\begin{lemma}						\label{surface sublemma 2}
Let $A$ be an arc in the interior of $X$, and assume the curvature is zero on an open set containing the $\epsilon$-rectangle of $A$.
Then we can modify $g_\eta$ so that $g_\eta=g_\gamma$ on $A\times [-\epsilon,\epsilon]$.
\end{lemma}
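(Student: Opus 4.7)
The plan is to modify the diffeomorphism $S_\eta$ by a $u$-dependent interpolation that reduces to the identity on $A$ and to $S_\eta$ outside a slightly larger arc $A' \supseteq A$, and then redefine the metric as the pullback of $h_\eta$ by this interpolated map. The curvature hypothesis will force $h_\eta$ to be flat on $A' \times [-\epsilon, \epsilon + 2\eta]$, so the pullback is automatically flat on $A' \times [-\epsilon,\epsilon]$ and nonpositive curvature is preserved globally.

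The first key step is to upgrade the pointwise curvature condition into the structural fact that the warping function $f$ is identically $1$ on a full $\epsilon$-rectangle of a neighborhood of $A$. Since $K_g = -f_{vv}/f$ vanishes on an open set containing $E(A \times [-\epsilon,\epsilon])$ and the geodesic equation for $\gamma$ gives $f_v(u,0) = 0$, integrating $f_{vv} \equiv 0$ in $v$ from the initial condition $f(u,0) = 1$ yields $f \equiv 1$ on $A' \times [-\epsilon,\epsilon]$ for some open $A' \subseteq X$ containing $A$. Consequently $h_\eta$ reduces to $du^2 + dw^2$ on $A' \times [-\epsilon - 2\eta, \epsilon + 2\eta]$.

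Next I would fix a smooth cutoff $\tau \colon X \to [0,1]$ with $\tau \equiv 0$ on $A$ and $\tau \equiv 1$ outside $A'$, and set
\[\tilde\sigma(u,v) = (1-\tau(u))\,v + \tau(u)\,\sigma_\eta(v), \qquad \Phi(u,v) = (u,\tilde\sigma(u,v)).\]
Since $\sigma'_\eta \ge 1$ implies $\partial_v \tilde\sigma \ge 1 > 0$, the map $\Phi$ is a smooth local diffeomorphism into the domain of $h_\eta$, and I declare the modified metric to be $\tilde g := \Phi^* h_\eta$. On $A \times [-\epsilon,\epsilon]$ the cutoff vanishes, so $\Phi$ is the identity and $\tilde g = du^2 + dv^2 = g_\gamma$, as required. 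Outside $A' \times [-\epsilon,\epsilon]$ the cutoff equals $1$, so $\Phi = S_\eta$ and $\tilde g = g_\eta$, giving smooth global agreement with the original construction. On the transition band $(A' \setminus A) \times [-\epsilon,\epsilon]$, $\tilde g$ is the pullback of the flat metric $du^2 + dw^2$ by a local diffeomorphism and is therefore itself flat, so nonpositive curvature holds everywhere.

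The main obstacle is to simultaneously preserve the quantitative conclusions (1)--(4) of \lemref{surface sublemma 1}. The flat tube on $|v| \le \eta$ persists because $\sigma_\eta(v) = v$ there makes $\tilde\sigma(u,v) = v$ regardless of $\tau(u)$, and $\gamma$ remains a geodesic since the $v \equiv 0$ line sits inside this flat tube. The $(1 \pm \delta)$-pinching estimate reduces to controlling $|\partial_u \tilde\sigma| \le 2\eta\,\|\tau'\|_\infty$ and $|\partial_v \tilde\sigma - 1| \le 3\sqrt{\eta}$, both of which are $O(\sqrt{\eta})$ once $A'$ and $\tau$ are fixed. Since the parameter $\eta$ in \lemref{surface sublemma 1} can be chosen as small as desired after $A'$ and $\tau$ have been selected, taking $\eta$ sufficiently small yields all the required bounds and completes the construction.
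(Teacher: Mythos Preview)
Your argument is correct and follows essentially the same route as the paper. The paper also deduces $f\equiv 1$ on a neighborhood $U=(a-\chi,b+\chi)\times[-\epsilon,\epsilon]$ of the $\epsilon$-rectangle of $A$, then replaces $S_\eta$ by a $u$-dependent interpolation $\bar S(u,v)=(u,\sigma_{\eta,\theta(u)}(v))$ with $\theta\equiv 0$ near $A$ and $\theta\equiv 1$ outside $(a-\chi/2,b+\chi/2)$, and sets $\bar g_\eta=\bar S^\ast h_\eta$. Your linear interpolation $\tilde\sigma(u,v)=(1-\tau(u))v+\tau(u)\sigma_\eta(v)$ is simply a concrete choice of such a family (indeed it satisfies all the listed properties of $\sigma_{\eta,t}$), and your bound $|\partial_u\tilde\sigma|\le 2\eta\|\tau'\|_\infty$ is exactly the paper's estimate $|\tfrac{d}{du}\sigma_{\eta,\theta(u)}(v)|\le 3\eta|\theta'(u)|$ used to recover property~(c) after fixing the cutoff and shrinking $\eta$.
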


\begin{proof}
Let $A=[a,b]$ be an arc.
Then there is $\chi>0$ such that on
$U=(a-\chi,b+\chi) \times [-\epsilon,\epsilon]$, the curvature of $g_\gamma$ is zero.
We have to prove that we can modify $g_\eta$ so that $g_\eta=g_\gamma$ on $A\times [-\epsilon,\epsilon]$.
Since the curvature is zero we have $f_{vv}=0$.
But we also have $f_v(u,0)=0$ and $f(u,0)=1$.
Therefore $f\equiv 1$ on $U$, hence $g=du^2+dv^2$ on $U$.
On the other hand, from the definitions, one can see that on $U$ we have $h_\eta=du^2+dv^2$ and $g_\eta=du^2+(\sigma'_{\eta})^2 dv^2$, which is isometric to $h_\eta=du^2+dv^2$ via $S_\eta$.
We now change $S_\eta$.
Let $\theta:X\ra [0,1]$ such that $\theta \equiv 1$ outside $(a-\chi/2,b+\chi/2)$ and $\theta \equiv 0$ on a neighbohood of $A$.
Define $\barS(u,v)=(u,\sigma_{\eta,\theta(u)}(v))$.
Also define the modified metric $\bg_\eta=\barS^\ast h_\eta$.
It can now be shown from the definitions that properties (a), (b) and (d) still hold for $\bg_\eta$; moreover we also have $\bg_\eta= du^2+dv^2=g_\gamma$ on $A\times [-\epsilon,\epsilon]$, as required.
One may have now a new problem with property (c) since there is a new term $\frac{d}{du}\sigma_{\eta,\theta(u)}(v)$ in the derivative of $\barS_\eta$ that could be large.
To solve this note that property (6) in the definition of $\sigma$ implies
$|\frac{d}{du}\sigma_{\eta,\theta(u)}(v)|
= |\theta'(u)|\,| \frac{d}{dt}\sigma_{\eta,t}(v)|_{t=\theta(u)}|
\leq 3\eta |\theta'(u)|$.
Hence, we can just fix $\chi$ and $\theta$ and take $\eta$ very small.
In this way it is straightforward to show that (c) still holds, maybe with a larger $C$ which depends on the fixed number $\chi$ and fixed function $\theta$.
\end{proof}

We divide the remainder of the proof in three cases.\\

\noindent{\bf 1. $X=\bS^1$ and $E$ is an Embedding.}

\noindent
If $E$ is an embedding we can define the metric $g_1$ by demanding $g_1=g$ outside the image of $E$ and equal to $E_\ast g_\eta$ inside the image of $E$, where $g_\eta$ is as in equation ($\ast$).
By property (a) this metric is well defined.
By choosing $\eta$ small we get that this $g_1$ satisfies properties (1)-(4) of \lemref{surface sublemma 1}.
Property (5) follows from \lemref{surface sublemma 2}.\\

\noindent{\bf 2. $X=[0,\ell]$ is an Interval and $E$ an Embedding.}

\noindent
First we have to extend the domain of $E$.
Since $\epsilon< \epsilon_{\gamma,g}/2$ there is $\chi>0$ such that $E$ extends to an embedding $E:I\times [-\epsilon,\epsilon]\ra S$, where $I=[-\chi,\ell+\chi]$.
Let $\theta:I\ra[0,1]$ be smooth and such that $\theta\equiv 1$ on a neighborhood of $[0,\ell]$ and $\theta\equiv 0$ near the end points of $I$.
Now consider the following extensions of $h_\eta$ and $S_\eta$.
Define $h_\eta(u,v)=f^2(u,\rho_{\eta,\theta(u)}(v))du^2+dv^2$, and $S_\eta(u,v)=(u,\sigma_{\eta,\theta(u)}(v))$.
Finally define $g_\eta=S_\eta^\ast h_\eta$.
It can be directly checked from the definitions that properties (a), (b) and (d) above still hold.
Also, from the definition of the newly extended $g_\eta$ we have that $g_\eta=g_\gamma$ near $\{-\chi\}\times [-\epsilon,\epsilon]$ and $\{\ell+\chi\}\times [-\epsilon,\epsilon]$.
Property (c) can be proven as in the proof of \lemref{surface sublemma 2}: Fix $\chi$ and takes $\eta$ sufficiently small.
Define $g_1=g$ outside the image of $E$ and $g_1=E_\ast g_\eta$ on the image of $E$.
By (a)-(d) $g_1$ is well defined and satisfies (1)-(4) of \lemref{surface sublemma 1}.
In fact a bit more than (3) holds:  $\gamma:I\ra S$ is still a geodesic.
Also, (5) follows from \lemref{surface sublemma 2}.\\

\noindent{\bf 3. General Case.}

\noindent
We assume $X=\bS^1(\ell)$.
The case of $X$ being an interval is similar.
We now allow the closed geodesic $\gamma$ to have self-intersections.
To simplify our argument we assume $\gamma$ has exactly one self-intersection at the point
$p=\gamma(u_1)=\gamma(u_2)$, $u_1\neq u_2$; the case with more self-intersections is similar.
Let $A$ be an arc in $\bS^1(\ell)$ containing $u_1$ as middle point, and $\epsilon$ small such that $E$ restricted to $A\times [-\epsilon,\epsilon]$ is an embedding.
By case 2, and taking $\epsilon$ even smaller if necessary, we can assume (1) the curvature on the $\epsilon'$-rectangle of $A$ is zero, where $\epsilon'<\epsilon$, (2) $\gamma$ still has exactly one self-intersection at some $q=\gamma(u'_1)=\gamma( u'_2)$ near $p$, with $u'_1\in A$, (3) the intersection of the $\epsilon'$-rectangle of $A$ with the image of $\gamma$ is exactly two arcs $\gamma(A)$ and $\gamma(A')$, where $A\cap A'=\emptyset$, $u'_2\in A'$, and $\gamma(A')\cap E(\p A\times [-\epsilon,\epsilon])=\emptyset$.
After applying case 2 the length of $\gamma$ may change a bit, but we will still denote it by $\ell$.

\begin{remark}
Note that after applying the (already-proved) Case 2 of \lemref{surface sublemma 1}, $\gamma$ may change a bit, but the new $\gamma$ can be chosen as close as the old $\gamma$ by taking $\delta$ in \lemref{surface sublemma 1} (Case 2) as small as needed.
\end{remark}

Let $A''\sbs A'$ such that
$\gamma(A'')= \gamma(A')\cap \gamma(A\times [-\epsilon'/4,\epsilon'/4])$.
Note that $u'_2\in A''$.
Now, let $\epsilon''>0$ be small so that (1) the $\epsilon''$-rectangle of $A''$ is contained in the interior of the $\epsilon'$-rectangle of $A$, (2) the intersection of the $\epsilon''$-rectangles of $A$ and $A''$ is disjoint from $E(\p A\times [-\epsilon'',\epsilon''])$ and $E(\p A''\times [-\epsilon'',\epsilon''])$.
Let $g_\eta$ be the metric on $X\times [-\epsilon,\epsilon]$ given in equation ($\ast$), with the new $\epsilon=\epsilon''$ and $X=\bS^1(\ell)$.
By \lemref{surface sublemma 2} we can assume that $g_\eta=g_\gamma$ on $A\times [-\epsilon'',\epsilon'']$ and on $A''\times [-\epsilon'',\epsilon'']$.
As before we define $g_1=g$ outside the image of $E$ and $g_1=E_\ast g_\eta$ on the image of $E$.
Note that this metric is well defined because $E_\ast g_\eta=g$ on the $\epsilon''$-rectangles of $A$ and $A''$.
As in the previous cases, $g_1$ satisfies (1)-(5) in the statement of \lemref{surface sublemma 1}.
\end{proof}

\bibliographystyle{amsplain}
\bibliography{refs}

\providecommand{\bysame}{\leavevmode\hbox to3em{\hrulefill}\thinspace}
\providecommand{\MR}{\relax\ifhmode\unskip\space\fi MR }
\providecommand{\MRhref}[2]{%
  \href{http://www.ams.org/mathscinet-getitem?mr=#1}{#2}
}
\providecommand{\href}[2]{#2}
\begin{thebibliography}{10}

\bibitem{akin97}
Ethan Akin, \emph{Recurrence in topological dynamics}, The University Series in
  Mathematics, Plenum Press, New York, 1997, Furstenberg families and Ellis
  actions.

\bibitem{ballmann}
Werner Ballmann, \emph{Lectures on spaces of nonpositive curvature}, DMV
  Seminar, vol.~25, Birkh\"auser Verlag, Basel, 1995, With an appendix by Misha
  Brin.

\bibitem{bb}
Werner Ballmann and Sergei Buyalo, \emph{Periodic rank one geodesics in
  {H}adamard spaces}, Geometric and probabilistic structures in dynamics,
  Contemp. Math., vol. 469, Amer. Math. Soc., Providence, RI, 2008,
  \url{http://dx.doi.org/10.1090/conm/469/09159}, pp.~19--27.

\bibitem{bl-centers}
Andreas Balser and Alexander Lytchak, \emph{Centers of convex subsets of
  buildings}, Ann. Global Anal. Geom. \textbf{28} (2005), no.~2, 201--209,
  \url{http://dx.doi.org/10.1007/s10455-005-7277-4}.

\bibitem{bridson}
Martin~R. Bridson and Andr{\'e} Haefliger, \emph{Metric spaces of non-positive
  curvature}, Grundlehren der Mathematischen Wissenschaften [Fundamental
  Principles of Mathematical Sciences], vol. 319, Springer-Verlag, Berlin,
  1999.

\bibitem{burago}
Dmitri Burago, Yuri Burago, and Sergei Ivanov, \emph{A course in metric
  geometry}, Graduate Studies in Mathematics, vol.~33, American Mathematical
  Society, Providence, RI, 2001.

\bibitem{burns-spatzier}
Keith Burns and Ralf Spatzier, \emph{Manifolds of nonpositive curvature and
  their buildings}, Inst. Hautes \'Etudes Sci. Publ. Math. (1987), no.~65,
  35--59, \url{http://www.numdam.org/item?id=PMIHES_1987__65__35_0}.

\bibitem{burtscher}
Annegret~Y. Burtscher, \emph{Length structures on manifolds with continuous
  {R}iemannian metrics}, New York J. Math. \textbf{21} (2015), 273--296,
  \url{http://nyjm.albany.edu:8000/j/2015/21_273.html}.

\bibitem{caprace-sageev}
Pierre-Emmanuel Caprace and Michah Sageev, \emph{Rank rigidity for {CAT}(0)
  cube complexes}, Geom. Funct. Anal. \textbf{21} (2011), no.~4, 851--891,
  \url{http://dx.doi.org/10.1007/s00039-011-0126-7}.

\bibitem{chen-eberlein}
Su~Shing Chen and Patrick Eberlein, \emph{Isometry groups of simply connected
  manifolds of nonpositive curvature}, Illinois J. Math. \textbf{24} (1980),
  no.~1, 73--103,
  \url{http://projecteuclid.org/getRecord?id=euclid.ijm/1256047798}.

\bibitem{eb72}
Patrick Eberlein, \emph{Geodesic flows on negatively curved manifolds. {I}},
  Ann. of Math. (2) \textbf{95} (1972), 492--510.

\bibitem{eb82}
\bysame, \emph{Isometry groups of simply connected manifolds of nonpositive
  curvature. {II}}, Acta Math. \textbf{149} (1982), no.~1-2, 41--69,
  \url{https://doi.org/10.1007/BF02392349}.

\bibitem{eh90}
Patrick Eberlein and Jens Heber, \emph{A differential geometric
  characterization of symmetric spaces of higher rank}, Inst. Hautes \'{E}tudes
  Sci. Publ. Math. (1990), no.~71, 33--44,
  \url{http://www.numdam.org/item?id=PMIHES_1990__71__33_0}.

\bibitem{fl}
Thomas Foertsch and Alexander Lytchak, \emph{The de {R}ham decomposition
  theorem for metric spaces}, Geom. Funct. Anal. \textbf{18} (2008), no.~1,
  120--143, \url{http://dx.doi.org/10.1007/s00039-008-0652-0}.

\bibitem{go}
Ross Geoghegan and Pedro Ontaneda, \emph{Boundaries of cocompact proper {${\rm
  CAT}(0)$} spaces}, Topology \textbf{46} (2007), no.~2, 129--137,
  \url{https://doi.org/10.1016/j.top.2006.12.002}.

\bibitem{gs}
Dan~P. Guralnik and Eric~L. Swenson, \emph{A `transversal' for minimal
  invariant sets in the boundary of a {CAT}(0) group}, Trans. Amer. Math. Soc.
  \textbf{365} (2013), no.~6, 3069--3095,
  \url{http://dx.doi.org/10.1090/S0002-9947-2012-05714-X}.

\bibitem{kleiner}
Bruce Kleiner, \emph{The local structure of length spaces with curvature
  bounded above}, Math. Z. \textbf{231} (1999), no.~3, 409--456,
  \url{http://dx.doi.org/10.1007/PL00004738}.

\bibitem{leeb}
Bernhard Leeb, \emph{A characterization of irreducible symmetric spaces and
  {E}uclidean buildings of higher rank by their asymptotic geometry}, Bonner
  Mathematische Schriften [Bonn Mathematical Publications], 326, Universit\"at
  Bonn, Mathematisches Institut, Bonn, 2000,
  \url{https://arxiv.org/abs/0903.0584}.

\bibitem{lytchak05}
A.~Lytchak, \emph{Rigidity of spherical buildings and joins}, Geom. Funct.
  Anal. \textbf{15} (2005), no.~3, 720--752,
  \url{http://dx.doi.org/10.1007/s00039-005-0519-6}.

\bibitem{ps}
Panos Papasoglu and Eric Swenson, \emph{Boundaries and {JSJ} decompositions of
  {CAT}(0)-groups}, Geom. Funct. Anal. \textbf{19} (2009), no.~2, 559--590,
  \url{http://dx.doi.org/10.1007/s00039-009-0012-8}.

\bibitem{ricks-mixing}
Russell Ricks, \emph{Flat strips, {B}owen-{M}argulis measures, and mixing of
  the geodesic flow for rank one {${\rm CAT}(0)$} spaces}, Ergodic Theory
  Dynam. Systems \textbf{37} (2017), no.~3, 939--970,
  \url{http://dx.doi.org/10.1017/etds.2015.78}.

\bibitem{ricks-onedim}
\bysame, \emph{A rank rigidity result for {CAT}(0) spaces with one-dimensional
  {T}its boundaries}, Forum Math. \textbf{31} (2019), no.~5, 1317--1330,
  \url{https://doi.org/10.1515/forum-2018-0133}.

\end{thebibliography}

\end{document}